\newtheorem{theorem}{Theorem}[section]
\newtheorem*{theorem*}{Theorem}
\newtheorem{corollary}[theorem]{Corollary}
\newtheorem{remark}[theorem]{Remark}
\newtheorem{lemma}[theorem]{Lemma}
\newtheorem{definition}[theorem]{Definition}
\newtheorem{proposition}[theorem]{Proposition}
\newtheorem{example}[theorem]{Example}
\newtheorem*{proposition*}{Proposition}
\newcommand{\R}{\mathbb{R}}
\newcommand{\E}{\mathbb{E}}
\newcommand{\Dxp}{D_{\bx_p}}
\newcommand{\be}{\begin{eqnarray*}}
\newcommand{\ee}{\end{eqnarray*}}
\newcommand{\ba}{\begin{align*}}
\newcommand{\bpm}{\begin{pmatrix}}
\newcommand{\epm}{\end{pmatrix}}
\newcommand{\bx}{\boldsymbol{x}}
\newcommand{\by}{\boldsymbol{y}}
\begin{document}
\title{Generalized partial-slice monogenic functions}

\author{Zhenghua Xu$^1$ \thanks{This work was partially supported by  the Anhui Provincial Natural Science Foundation (No. 2308085MA04) and the National Natural Science Foundation of China (No. 11801125).} Irene Sabadini$^2$ \thanks{This work was partially supported by PRIN 2022 {\em Real and Complex Manifolds: Geometry and Holomorphic Dynamics}.}\\
\emph{$^1$
\small School of Mathematics, Hefei University of Technology,}
\emph{\small  Hefei, 230601, P.R. China}\\
\emph{\small E-mail address: zhxu@hfut.edu.cn}
\\
\emph{\small $^2$ Politecnico di Milano, Dipartimento di Matematica,}
\emph{\small Via E. Bonardi, 9, 20133 Milano Italy} \\
\emph{\small E-mail address:   irene.sabadini@polimi.it}
}

\date{}

\maketitle

\begin{abstract}
The two function theories of monogenic and of slice monogenic functions have been extensively studied in the literature and were developed independently; the relations between them, e.g. via Fueter mapping and Radon transform, have been studied. The main purpose of this article is to describe a new function theory which includes   both of them as special cases.  This theory allows to prove nice properties  such as the identity theorem,  a Representation Formula, the  Cauchy (and Cauchy-Pompeiu) integral formula, the  maximum modulus principle, a version of the Taylor series  and Laurent series expansions. As a complement, we shall also offer two approaches to these functions via generalized partial-slice  functions and via global differential operators. In addition, we discuss the conformal invariance property  under a proper  group of M\"{o}bius transformations preserving the partial  symmetry of the involved domains.
\end{abstract}
{\bf Keywords:}\quad Functions of a hypercomplex variable;  monogenic functions; slice monogenic functions; Clifford algebras\\
{\bf MSC (2020):}\quad  Primary: 30G35;  Secondary: 32A30, 32A26
\section{Introduction}
As a generalization of holomorphic functions of one complex variable, the classical Clifford analysis is a function theory studying nullsolutions of the generalized Cauchy-Riemann systems such as the  solutions of the Weyl or Dirac systems, called  monogenic functions, which are  defined on domains of the Euclidean space $\mathbb{R}^{n+1}$ or $\mathbb{R}^{n}$ and with  values in the real Clifford algebra $\mathbb{R}_{n}$.  Quaternions correspond to the case  $\mathbb{R}_{2}$ and are often denoted by $\mathbb{H}$.  The study of the class of quaternionic valued functions over $\mathbb H$  and in the kernel of the so-called Cauchy-Fueter operator is somewhat special and goes back to the thirties of the past century with the works of Fueter and Moisil, \cite{Fueter,Moisil}. More in general, the interest in theories of functions in the kernel of a generalized Cauchy-Riemann operator generalizing the complex holomorphic functions started after the study
of hypercomplex algebras in the classical works of Gauss, Hamilton, Hankel, Frobenius and
goes back to the end of the nineteenth century. For a list of references, the reader may consult the early paper \cite{ward} or the book \cite{Colombo-Sabadini-Struppa-20} for an updated account.
The literature on monogenic functions is very rich and standard reference books are \cite{Brackx,Colombo-Sabadini-Sommen-Struppa-04,Gilbert,Gurlebeck}.

Compared with complex analysis, there is one major issue in the theory of monogenic functions, namely the fact  that    polynomials  and series, even    powers of   the paravector variable, fail to be  monogenic  in Clifford analysis. This fact does not prevent to obtain a series expansion in terms of suitable homogeneneous monogenic polynomials but may prevent some other features like the definition of a counterpart of exponential function keeping the standard properties, some applications to operator theory, etc. In order to study the function class including  the powers of paravectors variables,  there have been various approaches.

In 1992, Leutwiler noticed that the power functions $x^{k} (k\in \mathbb{N})$ of the  paravector  variable $x\in \mathbb{R}^{n+1}$   are   conjugate gradients of   hyperbolic harmonic functions  and started  to  investigate  solutions of   $H$-systems \cite{Leutwiler}.  This type  of functions was originally restricted to be paravector-valued and further developed into the so-called theory of  hypermonogenic functions. These functions are defined in terms of a modified Dirac operator  introduced by Eriksson and Leutwiler in 2000 \cite{Eriksson}.

In 1998,  Laville and Ramadanoff \cite{Laville-Ramadanoff} introduced the holomorphic Cliffordian functions $f:  \mathbb{R}^{2m+2} \rightarrow\mathbb{R}_{2m+1}$, which are solutions of the differential equation $D\Delta^{m} f=0$, where  $D$ and $\Delta$ are the Dirac operator and   Laplacian in $\mathbb{R}^{2m+2}$, respectively.

In 2006, the theory of slice regular functions of one quaternionic variable was initiated by Gentili and Struppa \cite{Gentili-Struppa-06,Gentili-Struppa-07}, inspired by an idea of Cullen. This function theory includes power series of the form
 \begin{equation*}\label{slice-regular-H}
f(q)=\sum\limits_{n=0}^{\infty}q^na_n, \quad   q,  a_n\in \mathbb{H}.
\end{equation*}

In 2009, Colombo, Sabadini and Struppa \cite{Colombo-Sabadini-Struppa-09} generalized  the idea in \cite{Gentili-Struppa-06,Gentili-Struppa-07}  to functions defined   on domains of the Euclidean space $\mathbb{R}^{n+1}$, identified with the set of paravectors, and with values in the Clifford algebras $\mathbb{R}_{n}$. In analogy with the classical case, they called these functions slice monogenic functions.  The slice regularity can also be studied on octonions \cite{Gentili-Struppa-10}. Based on a well-known Fueter-Sce construction, see \cite{Colombo-Sabadini-Struppa-20} for a summary, these functions have eventually been  generalized to real alternative  algebras by Ghiloni and Perotti \cite{Ghiloni-Perotti-11}.    The slice regularity has been further developed to slice Dirac-regular functions over octonions  \cite{Jin-Ren-Sabadini}.
Despite the fact that  slice monogenic (or slice regular, or slice hyperholomorphic) functions have been introduced relatively recently, they are nowadays a well developed theory especially because a strong push has been given by the applications to operator theory. The literature is rather wide and for an overview of the numerous references the reader may consult   books \cite{Alpay1,Alpay,Colombo-Gantner-20,Colombo-Gantner-Kimsey-18,Colombo-Sabadini-Struppa-11,Colombo-Sabadini-Struppa-16,Gentili-Stoppato-Struppa-13} and  references therein.

The theories of monogenic functions and of slice monogenic functions  are natural generalizations of classical complex function theory in higher dimensions,  but they are two different function theories.   In fact, the slice monogenic functions appear already in the Fueter-Sce construction which  allows to construct monogenic (or regular) functions starting from holomorphic functions.
The intermediate step in the construction gives slice monogenic functions which are intrinsic (they are also called radially holomorphic functions, see \cite{Qian-97,Dong-Qian-21} or \cite{Gurlebeck}). However, it holds also for more general slice monogenic functions, since the Laplacian (or its power) is a real operator and so it acts on the real components of a function. For a discussion on slice monogenic functions from the point of view of the Fueter-Sce construction see \cite{Colombo-Sabadini-Struppa-20} and also \cite{Colombo-Gantner-20}.   Another link between these two classes of functions is given by Radon and dual Radon transforms for Clifford algebras valued functions \cite{Colombo-Sabadini-Soucek-15}.

 In  the present paper,   we introduce a new  class of functions which includes  classical  monogenic functions and slice monogenic functions as two special cases. Compared to slice regular and/or monogenic  functions, this new  theory has also nice properties such as   identity theorem, the Representation Formula,  Cauchy   integral formula with generalized partial-slice monogenic Cauchy kernel,  maximum modulus principle,   Taylor  series and Laurent series expansion formulas.

Below we explain how this paper is organized and the main results in each section.

In Section 2,  we recall some basic definitions about
Clifford algebras and introduce the notion of  monogenic and of slice monogenic functions.

In Section 3,  we introduce the concept  of  \textit{generalized partial-slice monogenic functions}. Thereafter, we present  some examples and prove some   properties among which the analog of Splitting Lemma
(Lemma \ref{Splitting-lemma}) which says that every  generalized partial-slice monogenic function, if restricted to each slice subspace, can be split into monogenic components.  We also introduce the analogs of Fueter polynomials which, in this case, are constructed in a suitable "ad hoc" way. Lemma \ref{Splitting-lemma}  implies that some basics in  the theory of monogenic functions   hold naturally for  generalized partial-slice monogenic functions, such as the Taylor series expansion (Lemma \ref{Taylor-lemma-left}) and the Cauchy integral formula (Theorem \ref{Cauchy-formula-monogenic}).   However, those results are local.

To describe  more   global properties, the concepts of \textit{slice domain} and of \textit{partially symmetric domain} are introduced.  On these domains,  generalized partial-slice monogenic  functions satisfy  an  identity theorem (Theorem  \ref{Identity-theorem}),  which can  be  used  to  establish the Representation Formula (Theorem \ref{Representation-Formula-SM}) if the domain is, additionally,  partially symmetric.  Theorem \ref{Representation-Formula-SM} allows to prove  the Cauchy   integral formula with generalized
partial-slice  Cauchy kernel (Theorem \ref{Cauchy-formula-slice}), a global  Taylor series  (Theorem  \ref{Taylor-theorem}) and a  Laurent series expansions  (Theorem  \ref{Laurent}).  It is important to note that the Representation Formula leads to an extension result of monogenic functions on  a slice subspace but  we also introduce  a version of the Cauchy-Kovalevskaya extension starting from a real analytic function in $\mathbb R^{p+1}$. Finally, we prove a  maximum modulus principle for generalized partial-slice monogenic functions defined over  slice domains.

In  Section  4, inspired by the Fueter-Sce construction and \cite{Ghiloni-Perotti-11},  we  introduce the so-called \textit{generalized partial-slice function}. These functions naturally satisfy a representation formula (Theorem  \ref{Representation-Formula-SR}), which implies the validity of  a Cauchy-Pompeiu  integral  formula (Theorem \ref{Cauchy-Pompeiu}) for  generalized partial-slice  $C^{1}$ functions.  We shall say that a generalized partial-slice  function $f$ is called \textit{regular} if   satisfies a suitable generalized Cauchy-Riemann equations (see Definition  \ref{definition-GSR}). In Theorem  \ref{relation-GSR-GSM},  we  present a relationship between these two classes of generalized partial-slice monogenic and regular introduced in Sections 3 and 4.

In  Section 5, two  nonconstant coefficients differential operators $G_{\bx}$ and  $\overline{\vartheta}$
 are introduced to study generalized partial-slice monogenic functions.
 In Theorem  \ref{relationship-g},  we  obtain  a relationship between kernel spaces $ker G_{\bx}$, $ker \overline{\vartheta}$, and the  function class of generalized partial-slice monogenic (or regular)  functions  over partially symmetric domains.

 Finally, in  Section  6,  we investigate the conformal invariance property of generalized partial-slice monogenic functions. To this end, we define a subgroup of M\"obius group preserving the partial symmetry and describe the conformal invariance property  of generalized partial-slice monogenic functions  under this subgroup in Theorem \ref{invariance}.

\section{Preliminaries}
In this section, we collect some preliminary results on Clifford algebras and on the function theories of monogenic  functions and  of slice   monogenic  functions.
\subsection{Clifford algebras}
Let $\{e_1,e_2,\ldots, e_n\}$ be a standard orthonormal basis for the $n$-dimensional real Euclidean space  $\mathbb{R}^n$. A real Clifford algebra, denoted by $\mathbb{R}_{n}$, is generated  by these basis elements assuming that they satisfy  the relations $$e_i e_j + e_j e_i= -2\delta_{ij}, \quad 1\leq i,j\leq n,$$
where $\delta_{ij}$ is the Kronecker symbol. Hence, an arbitrary element in the Clifford algebra $\mathbb{R}_{n}$ can be written as
 $$a=\sum_A a_Ae_A, \quad a_A\in \mathbb{R},$$
 where
$$e_A=e_{j_1}e_{j_2}\cdots e_{j_r},$$
and $A=\{j_1,j_2, \cdots, j_r\}\subseteq\{1, 2, \cdots, n\}$ and $1\leq j_1< j_2 < \cdots < j_r \leq n,$  $ e_\emptyset=e_0=1$.

The norm of $a$ is defined by $|a|= ({\sum_{A}|a_{A}|^{2}} )^{\frac{1}{2}}.$ As a real  vector space, the dimension of Clifford algebra $\mathbb{R}_n$  is $2^{n}$.

For $k=0,1,\ldots,n$, the real linear subspace $\mathbb{R}_n^k$ of $\mathbb{R}_n$, called $k$-vectors, is  generated by the $\begin{pmatrix} n\\k\end{pmatrix}$ elements of the form
 $$e_A=e_{i_1}e_{i_2}\cdots e_{i_k},\quad 1\leq i_1<i_2<\cdots<i_k\leq n.$$
In particular, the element $[a]_{0}=a_{\emptyset}$ is called scalar part of $a$.

In $\mathbb{R}_n$ we can define some involutions.
The Clifford conjugation of $a$ is  defined as
$$\overline{a} =\sum_Aa_A\overline{e_A},$$
 where $\overline{e_{j_1}\ldots e_{j_r}}=\overline{e_{j_r}}\ldots\overline{e_{j_1}},\ \overline{e_j}=-e_j,1\leq j\leq n,\ \overline{e_0}=e_0=1$.
The reversion of $a$ is defined as
$$\tilde{a}=\sum_{A}a_A\widetilde{e_A},$$
where $\widetilde{e_{j_1}\cdots e_{j_r}}=e_{j_r}\cdots e_{j_1}$. Also $\widetilde{ab}=\tilde{b}\tilde{a}$ for $a, b\in\mathbb{R}_n.$

We note that an important subset of Clifford numbers $\mathbb{R}_n$ is the set of the so-called paravectors which consists of elements in $\mathbb{R}_n^{0} \oplus \mathbb{R}_n^{1}$. This subset will  be identified with $\mathbb{R}^{n+1}$ via the map
$$(x_0,x_1,\ldots,x_n) \longmapsto   x=x_{0}+\underline{x}=\sum_{i=0}^{n}e_ix_i.$$
For a paravector $x\neq0$, its norm is given by $|x|=(x\overline x)^{1/2}$  and so its inverse is given by $x^{-1}=  \overline{x}|x|^{-2}.$

\subsection{M\"obius transformations}

Since they will be used later on, we revise some basic notions on M\"obius transformations in this framework.
The  Clifford group denoted by $\Gamma_{n}$ consists of all $a \in \mathbb{R}_{n}$   which can be written as a finite product of non-zero
paravectors in $\mathbb{R}^{n+1}$. For a domain $U$ in $\R^{n+1}$, a diffeomorphism $\phi: U\longrightarrow \mathbb{R}^{n+1}$ is said to be conformal if, for each $x\in U$ and each $\mathbf{u,v}\in TU_x$, the angle between $\mathbf{u}$ and $\mathbf{v}$ is preserved under the corresponding differential $d\phi_x$ at $x$.
For $n\geq 2$, a theorem of Liouville shows that  the only conformal transformations  are M\"obius transformations. It is well known that  any M\"{o}bius transformation on $\mathbb{R}^{n+1} \cup \{\infty\}$ can be expressed as
$M:\mathbb{R}^{n+1} \cup \{\infty\} \rightarrow \mathbb{R}^{n+1} \cup \{\infty\},$
 $$M \langle x\rangle :=(ax+b)(cx+d)^{-1}, \quad M=\begin{pmatrix} a \ b \\ c\ d\end{pmatrix},$$
where $ a,  b,  c,  d\in \mathbb{R}_n$ satisfying     \textit{Ahlfors-Vahlen conditions} \cite{Ahlfors-85,Ahlfors-86}:\\
(i) $ a,  b,  c,  d \in \Gamma_{n}\cup \{0\};$\\
(ii) $a\tilde{b},  c\tilde{d}, \widetilde{c}a,\widetilde{d}b \in\mathbb{R}^{n+1};$\\
(iii) $a\tilde{d}-b\tilde{c}\in \mathbb{R}\backslash \{0\}.$

Equipped with the product of matrices, the set that consists of matrices   $\begin{pmatrix} a \ b \\ c\ d\end{pmatrix}$,
where  coefficients $a,b,c,d\in \mathbb{R}_n$ satisfy   Ahlfors-Vahlen conditions,  forms a group, called the general Ahlfors-Vahlen  group and denoted by $GAV( \mathbb{R}\oplus \mathbb{R}^{n})$.  These conditions ensure that   M\"{o}bius transformations  map from $\mathbb{R}^{n+1}$ to itself.  The group $GAV( \mathbb{R}\oplus \mathbb{R}^{n})$ can be generated by the following four types of matrices (see e.g., \cite{Elstrodt-87}):


(i) Translations: $\begin{pmatrix}1 \ \ b \\ 0 \ \ 1\end{pmatrix} $,  $ b\in \mathbb{R}^{n+1}$,
  induces the M\"obius transformation   $M\langle x\rangle =x+b$;

(ii) Inversion: $\begin{pmatrix}  0 \ \ 1\\ -1\ 0 \end{pmatrix},$
  induces $M\langle x\rangle =-x^{-1}$;

(iii) Rotations: $\begin{pmatrix}  a\   \ \ 0 \\ 0 \ \ a \end{pmatrix}$,   $ a\in \mathbb{R}^{n+1}$ with $|a|=1$,
induces  $M\langle x\rangle =ax\overline{a}$;

(iv) Dilations: $\begin{pmatrix}  \lambda\   \ \ 0 \\ 0 \ \ \lambda^{-1} \end{pmatrix},$   $\lambda \in \mathbb{R}\backslash \{0\},$
induces  $M\langle x\rangle =\lambda^{2}x$.

\subsection{Monogenic and slice monogenic functions}
In this  subsection,  we  recall the definitions of  monogenic and of slice  monogenic  functions.
\begin{definition}[Monogenic function]\label{monogenic-Clifford}
Let $\Omega $ be a domain (i.e., an open and connected  set) in $\mathbb{R}^{n+1}$ and let  $f:\Omega \rightarrow \mathbb{R}_{n}$   be   a function with continuous partial derivatives.  The function $f=\sum_{A}  e_{A}   f_{A}$ is called  (left)  monogenic in $\Omega $ if it satisfies the generalized Cauchy-Riemann equation
$$ Df(x)=\sum _{i=0}^{n}e_{i} \frac{\partial f}{\partial x_{i}}(x)
= \sum _{i=0}^{n} \sum_{A} e_{i}e_{A} \frac{\partial f_{A}}{\partial x_{i}}(x)=0, \quad x\in \Omega. $$
Similarly, the function $f$ is called  right   monogenic in $\Omega $ if
$$ f(x)D =\sum _{i=0}^{n} \frac{\partial f}{\partial x_{i}}(x)e_{i}= \sum _{i=0}^{n} \sum_{A} e_{A} e_{i} \frac{\partial f_{A}}{\partial x_{i}}(x)=0, \quad x\in \Omega. $$
\end{definition}
The operator
$$D=\sum_{i=0}^{n}e_i\frac{\partial}{\partial x_i}=\sum_{i=0}^{n}e_i\partial_{x_i}=\partial_{x_0}+\partial_{\underline{x}}$$
in the above definition is called Weyl operator  (or  generalized Cauchy-Riemann operator)  in $\mathbb{R}^{n+1}$, while  $\partial_{\underline{x}}$ denotes the classical Dirac operator in $\mathbb{R}^{n}$. Both null-solutions to  Weyl operator and  Dirac operator  are called monogenic in Clifford analysis.

 Note that all monogenic functions are real analytic. Furthermore, monogenic functions   are harmonic in view of  the equalities
$$
D\overline{D} =\overline{D} D =\Delta _{n+1}, \quad \partial_{\underline{x}} ^{2}=-\Delta _{n},$$
where $\Delta_n$ and $\Delta _{n+1}$ are the Laplacian in $\mathbb{R}^n$ and $\mathbb{R}^{n+1}$, respectively.

Note that every nonreal  paravector   in $\mathbb{R}^{n+1}$  can be written in the form
$x=x_{0}+r  \omega$ with $r= |\underline{x}|=\big(\sum_{i=1}^{n} x_i^{2}\big)^{\frac{1}{2}}$ and $\omega$, which is uniquely determined, behaves  as  a classical imaginary unit, that is
$$\omega \in S^{n-1}=\big\{x\in \mathbb{R}^{n+1}: x^2 =-1\big\}.$$
When $x$ is real, then $r=0$ and for every $\omega\in S^{n-1}$ one can write  $x=x+\omega\cdot 0$.

Based on this observation, the definition of slice  monogenic functions is introduced as follows, see \cite{Colombo-Sabadini-Struppa-09,Colombo-Sabadini-Struppa-11}.
\begin{definition}[Slice monogenic function]\label{slice-monogenic-Clifford}
 Let $\Omega$ be a domain in $\mathbb{R}^{n+1}$. A function $f :\Omega \rightarrow \mathbb{R}_{n}$ is called (left)  slice monogenic if, for all $ \omega \in   S^{n-1}$, its restriction $f_{\omega}$ to $\Omega_{\omega}=\Omega\cap (\mathbb{R}  \oplus \omega\mathbb{R})\subseteq \mathbb{R}^{2}$ is  holomorphic, i.e., it has continuous partial derivatives and satisfies
$$ (\partial_{x_0} +\omega\partial_{r}) f_{\omega}(x_0+r\omega)=0$$
for all $x_0+r\omega \in \Omega_{\omega}$.
 \end{definition}

\begin{remark} {\rm Both the theories  of monogenic and of slice monogenic functions are defined on paravectors and they both give holomorphic functions for $n=1$. The case $n=1$ is the only case in which the domain and the range are equidimensional, besides the quaternionic regular case and the octonionic case.

Specifically, in  quaternionic analysis,  Fueter  \cite{Fueter} in 1934 introduced the following  generalized Cauchy-Riemann operator, now called Cauchy-Riemann-Fueter operator,
 $$\mathcal{D}=\frac{\partial}{\partial x_{0}}+i\frac{\partial}{\partial x_{1}}+j\frac{\partial}{\partial x_{2}}+k\frac{\partial}{\partial x_{3}}$$
 where $(x_{0}, x_{1},x_{2},x_{3})$  is  the real coordinate  of the quaternion $q=x_{0}+x_{1}i+x_{2}j+x_{3}k$.
 See e.g. \cite{Colombo-Sabadini-Sommen-Struppa-04,Fueter,Gurlebeck,Sudbery-79}  for more details on  regular functions.}
\end{remark}

\section{Generalized partial-slice monogenic functions}

In the sequel, let $p$ and $q$ be a nonnegative and a positive integer, respectively. We consider functions $f:\Omega\longrightarrow  \mathbb{R}_{p+q}$ where $\Omega\subseteq\R^{p+q+1}$ is a domain.

An element $\bx\in\R^{p+q+1}$ will be identified with a paravector in $\mathbb{R}_{p
+q}$ and we shall write it as
$$\bx=\bx_p+\underline{\bx}_q \in\R^{p+1}\oplus\R^q, \quad \bx_p=\sum_{i=0}^{p}x_i e_i,\ \underline{\bx}_q=\sum_{i=p+1}^{p+q}x_i e_i.$$
This splitting of $\bx\in\R^{p+q+1}$ as $\R^{p+1}\oplus\R^q$ is fixed throughout the paper.

Similarly, the generalized Cauchy-Riemann operator   and the Euler operator are split as

\begin{equation}\label{Dxx}
D_{\bx}=\sum_{i=0}^{p+q}e_i\partial_{x_i}=\sum_{i=0}^{p}e_i\partial_{x_i}+
\sum_{i=p+1}^{p+q}e_i\partial_{x_i}=:D_{\bx_p}+D_{\underline{\bx}_q},
\end{equation}
\begin{equation*}\label{Exx}
\E_{\bx}=\sum_{i=0}^{p+q}x_i\partial_{x_i}=\sum_{i=0}^{p}x_i\partial_{x_i}+
\sum_{i=p+1}^{p+q}x_i\partial_{x_i}=:\E_{\bx_p}+\E_{\underline{\bx}_q}.
\end{equation*}

Let us denote by $\mathbb{S}$ the sphere of unit $1$-vectors in $\mathbb R^q$, whose elements $(x_{p+1},\ldots, x_{p+q})$ are identified with $\underline{\bx}_q=\sum_{i=p+1}^{p+q}x_i e_i$, i.e.
$$\mathbb{S}=\big\{\underline{\bx}_q: \underline{\bx}_q^2 =-1\big\}=\big\{\underline{\bx}_q=\sum_{i=p+1}^{p+q}x_i e_i:\sum_{i=p+1}^{p+q}x_i^{2}=1\big\}.$$
Note that, for $\underline{\bx}_q\neq0$, there exists a uniquely determined $r\in \mathbb{R}^{+}=\{x\in \mathbb{R}: x>0\}$ and $\underline{\omega}\in \mathbb{S}$, such that $\underline{\bx}_q=r\underline{\omega}$, more precisely
 $$r=|\underline{\bx}_q|, \quad \underline{\omega}=\frac{\underline{\bx}_q}{|\underline{\bx}_q|}. $$
 When $\underline{\bx}_q= 0$ we set $r=0$ and $\underline{\omega}$ is not uniquely defined, in fact for every $\underline{\omega}\in \mathbb{S}$ we have $\bx=\bx_p+\underline{\omega} \cdot 0$.

The upper half-space $\mathrm{H}_{\underline{\omega}}$ in $\mathbb{R}^{p+2}$ associated with $\underline{\omega}\in \mathbb{S}$ is defined by
$$\mathrm{H}_{\underline{\omega}}=\{\bx_p+r\underline{\omega}, \bx_p \in\R^{p+1}, r\geq0 \},$$
and it is clear from the previous discussion that
$$ \R^{p+q+1}=\bigcup_{\underline{\omega}\in \mathbb{S}} \mathrm{H}_{\underline{\omega}},$$
and
$$ \R^{p+1}=\bigcap_{\underline{\omega}\in \mathbb{S}} \mathrm{H}_{\underline{\omega}}.$$

In the sequel, we shall make use of the notation
$$
\Omega_{\underline{\omega}}:=\Omega\cap (\mathbb{R}^{p+1} \oplus \underline{\omega} \mathbb{R})\subseteq \mathbb{R}^{p+2}
$$
where $\Omega$ is an open set in $\mathbb{R}^{p+q+1}$.

Recalling the notation in formula \eqref{Dxx},  we now introduce the  definition of  generalized partial-slice monogenic functions as follows.
\begin{definition} \label{definition-slice-monogenic}
 Let $\Omega$ be a domain in $\mathbb{R}^{p+q+1}$. A function $f :\Omega \rightarrow \mathbb{R}_{p+q}$ is called left  generalized partial-slice monogenic of type $(p,q)$ if, for all $ \underline{\omega} \in \mathbb S$, its restriction $f_{\underline{\omega}}$ to $\Omega_{\underline{\omega}}\subseteq \mathbb{R}^{p+2}$  has continuous partial derivatives and  satisfies
$$D_{\underline{\omega}}f_{\underline{\omega}}(\bx):=(D_{\bx_p}+\underline{\omega}\partial_{r}) f_{\underline{\omega}}(\bx_p+r\underline{\omega})=0,$$
for all $\bx=\bx_p+r\underline{\omega} \in \Omega_{\underline{\omega}}$.
 \end{definition}
We denote by $\mathcal {GSM}(\Omega)$ (or $\mathcal {GSM}^{L}(\Omega)$ when needed) the function class of  all  left generalized partial-slice monogenic functions {of type $(p,q)$} in $\Omega$.  {It is immediate to verify that $\mathcal {GSM}^{L}(\Omega)$ is a right Clifford-module over $\mathbb R_{p+q}$.} Similarly, denote by $\mathcal {GSM}^{R}(\Omega)$ the left Clifford module of  all right  generalized partial-slice monogenic functions of type $(p,q)$ $f:\Omega  \rightarrow \mathbb{R}_{p+q}$ which are defined by requiring that the restriction $f_{\underline{\omega}}$ to $\Omega_{\underline{\omega}}$ for  $ \underline{\omega} \in \mathbb S$ and satisfies
$$f_{\underline{\omega}}(\bx)D_{\underline{\omega}}:={f_{\underline{\omega}} (\bx_p+r\underline{\omega})D_{\bx_p}}+ \partial_{r}f_{\underline{\omega}} (\bx_p+r\underline{\omega})\underline{\omega}=0.$$
\begin{remark}{\rm To ease the terminology, in the sequel we shall omit to specify the type $(p,q)$ since it will be clear from the context.}
\end{remark}
\begin{remark}\label{rem32}
{\rm
 When $(p,q)=(n-1,1)$, the  notion of generalized partial-slice monogenic functions in Definition  \ref{definition-slice-monogenic} coincides with the notion of  classical monogenic functions  defined in $\Omega\subseteq\mathbb{R}^{n+1}$   with values in the Clifford algebra $\mathbb{R}_{n}$, which is denoted by $\mathcal {M}(\Omega)$. See Definition  \ref{monogenic-Clifford}.

When $(p,q)=(0,n)$, Definition  \ref{definition-slice-monogenic}  boils down to that one of  slice monogenic functions in Definition \ref{slice-monogenic-Clifford} defined in $\mathbb{R}^{n+1}$ and   with values in the Clifford algebra $\mathbb{R}_{n}$, see \cite{Colombo-Sabadini-Struppa-09}.}
\end{remark}

 Before discussing properties of the function class $\mathcal {GSM}(\Omega)$,  we  provide some examples to illustrate the variety of functions belonging to this class.
\begin{example}\label{example-1}
With the above notations, let $\underline{\omega} \in\mathbb{S}$, $D=\mathrm{H}_{\underline{\omega}}\cup \mathrm{H}_{-\underline{\omega}}$ and set
\begin{eqnarray*}
f(\bx)=
\left\{
\begin{array}{ll}
0,     &\mathrm {if} \ \bx \in  \mathbb{R}^{p+q+1} \setminus D,
\\
1,   &\mathrm {if} \ \bx \in D \setminus \mathbb{R}^{p+1}.
\end{array}
\right.
\end{eqnarray*}
 Then $f \in \mathcal {GSM}^{L}(\mathbb{R}^{p+q+1} \setminus \mathbb{R}^{p+1}) \cap{\mathcal{GSM}}^{R}(\mathbb{R}^{p+q+1} \setminus \mathbb{R}^{p+1}).$
\end{example}

\begin{example}Given $n\in \mathbb{N}$, set
$$f(\bx)=(x_{0}+\underline{\bx}_{q})^{n}, \quad \bx=  \sum_{i=0}^{p}x_i e_i + \underline{\bx}_q, \ \underline{\bx}_q=\sum_{i=p+1}^{p+q}x_i e_i.$$
Then $f \in \mathcal {GSM}^{L}(\mathbb{R}^{p+q+1}) \cap{\mathcal{GSM}}^{R}(\mathbb{R}^{p+q+1}).$
\end{example}
\begin{example}\label{Cauchy-kernel-example}
Let $\Omega=\mathbb{R}^{p+q+1}\setminus \{0\}$ and set
$$E(\bx):=\frac{1}{\sigma_{p+1}}\frac{\overline{\bx}}{|\bx|^{p+2}}, \quad \bx \in\Omega,$$
where $\sigma_{p+1}=2\frac{\Gamma^{p+2}(\frac{1}{2}) }{\Gamma (\frac{p+2}{2})} $ is the surface area of the unit ball in $\mathbb{R}^{p+2}$. \\
Then $E(\bx) \in \mathcal {GSM}^{L}(\Omega) \cap{\mathcal{GSM}}^{R}(\Omega).$
\end{example}

\begin{remark}{\rm
As it is well-known, the Cauchy kernel  for monogenic functions defined on paravectors $\mathbb{R}^{p+1}$ is
$${\mathsf E}(\bx)=\frac{1}{\sigma_{p}}\frac{\overline{\bx}}{|\bx|^{p+1}}, \quad \bx=\sum_{i=0}^{p}x_i e_i\in \mathbb{R}^{p+1}\setminus \{0\}.$$
Compared with the classical  monogenic Cauchy kernel,   the remarkable point in Example  \ref{Cauchy-kernel-example} is the freedom of the dimension $q$.
In other words, the  new function class considered in this paper not only includes the two classes of  classical monogenic and of slice monogenic functions, but it is much larger than their synthesis.}
\end{remark}


Next result extends the Splitting Lemma, see \cite{Colombo-Sabadini-Struppa-09}, to the present case:

\begin{lemma} {\bf(Splitting lemma)}\label{Splitting-lemma}
Let $\Omega\subseteq \mathbb{R}^{p+q+1}$ be a domain and $f:\Omega\rightarrow \mathbb{R}_{p+q}$ be a  generalized partial-slice monogenic function. For
every arbitrary, but fixed $\underline{\omega} \in\mathbb{S},$ also denoted by $I_{p+1}$, let $I_{1},I_{2},\ldots,I_{p},I_{p+2},\ldots,I_{p+q}$ be a completion to a basis of $\mathbb{R}_{p+q}$ such that $I_{r}I_{s}+I_{s}I_{r}=-2\delta_{rs}$, $r,s=1,\ldots, p+q$, $I_r\in\mathbb{R}^p$, $r=1,\ldots, p$, $I_r\in\mathbb{R}^q$, $r=p+2,\ldots,p+q$. Then there exist $2^{q-1}$ monogenic functions $F_{A}:\Omega_{\underline{\omega}} \rightarrow \mathbb{R}_{p+1}={\rm Alg}\{I_1,\ldots, I_p, I_{p+1}=\underline{\omega}\}$  such that
$$ f_{\underline{\omega}}(\bx_p+r\underline{\omega})=\sum_{A=\{i_{1},\ldots, i_{s}\} \subseteq \{p+2,\ldots,p+q\}}F_{A}(\bx_p+r\underline{\omega})I_{A}, \quad \bx_p+r\underline{\omega}\in \Omega_{\underline{\omega}},$$
where $I_{A}=I_{i_{1}}\cdots I_{i_{s}},  A=\{i_{1},\ldots, i_{s}\} \subseteq \{p+2,\ldots,p+q\}$ with $i_{1}<\cdots<i_{s}$, and $I_{\emptyset}=1$ when $A=\emptyset$.
\end{lemma}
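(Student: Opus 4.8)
The plan is to fix $\underline{\omega}\in\mathbb{S}$ and complete it to an orthonormal basis $I_1,\ldots,I_{p+q}$ of $\mathbb{R}^{p+q}$ as in the statement, so that $\mathrm{Alg}\{I_1,\ldots,I_{p+1}\}\cong\mathbb{R}_{p+1}$ and the remaining generators $I_{p+2},\ldots,I_{p+q}$ anticommute with all of $I_1,\ldots,I_{p+1}$ and among themselves. Every element of $\mathbb{R}_{p+q}$ can then be written uniquely as $\sum_{A\subset\{p+2,\ldots,p+q\}}F_A\,I_A$ with $F_A$ valued in $\mathbb{R}_{p+1}$; applying this decomposition pointwise to the restriction $f_{\underline{\omega}}$ defines the $2^{q-1}$ component functions $F_A:\Omega_{\underline{\omega}}\to\mathbb{R}_{p+1}$, which inherit continuous partial derivatives from $f_{\underline{\omega}}$. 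It remains to show each $F_A$ is monogenic on $\Omega_{\underline{\omega}}\subseteq\mathbb{R}^{p+2}$ with respect to the Cauchy-Riemann operator $D_{\bx_p}+\underline{\omega}\,\partial_r=\sum_{i=0}^{p}e_i\partial_{x_i}+\underline{\omega}\,\partial_r$, under the identification $e_i=I_i$ for $i=0,\ldots,p$ and $\underline{\omega}=I_{p+1}$ playing the role of the $(p+1)$-st basis vector in $\mathbb{R}^{p+2}$.

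\textbf{Key steps.} First I would write out the hypothesis $D_{\underline{\omega}}f_{\underline{\omega}}=0$ using the decomposition: since $D_{\underline{\omega}}$ involves only $\partial_{x_0},\ldots,\partial_{x_p}$ and $\partial_r$, and since $I_0,\ldots,I_{p+1}$ all anticommute with the $I_A$ for $A\subset\{p+2,\ldots,p+q\}$ in a controlled way (each generator $I_j$ with $j\le p+1$ anticommutes with each generator appearing in $I_A$), we get
$$D_{\underline{\omega}}f_{\underline{\omega}}=\sum_{A}\bigl(D_{\underline{\omega}}F_A\bigr)I_A=0,$$
because the operator $D_{\underline{\omega}}$ acts on the $\mathbb{R}_{p+1}$-valued coefficient $F_A$ and the factor $I_A$ is constant and can be pulled to the right (the anticommutation signs cancel since $D_{\underline{\omega}}F_A$ is again $\mathbb{R}_{p+1}$-valued). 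The crucial point is that $\{I_A : A\subset\{p+2,\ldots,p+q\}\}$ together with $\mathbb{R}_{p+1}$ gives a direct-sum decomposition of $\mathbb{R}_{p+q}$ as a left (or right) $\mathbb{R}_{p+1}$-module, i.e. the products $I_B I_A$ with $I_B$ a basis monomial of $\mathbb{R}_{p+1}$ remain linearly independent and sit in distinct "sectors." Hence the vanishing of the total sum forces $D_{\underline{\omega}}F_A=0$ for each $A$ separately, which is exactly the statement that $F_A$ is left monogenic on $\Omega_{\underline{\omega}}$.

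\textbf{Main obstacle.} The only delicate bookkeeping is verifying that the coefficient-extraction map $\mathbb{R}_{p+q}\to\mathbb{R}_{p+1}$, $\;a\mapsto F_A(a)$, is well-defined, $\mathbb{R}$-linear, and commutes with the constant-coefficient differential operator $D_{\underline{\omega}}$ — in other words, that multiplication by the fixed element $I_A$ on the right is injective on $\mathbb{R}_{p+1}$ and that its image sectors for different $A$ are independent. This is a purely algebraic fact about the tensor-type decomposition $\mathbb{R}_{p+q}\cong\mathbb{R}_{p+1}\otimes\mathrm{Alg}\{I_{p+2},\ldots,I_{p+q}\}$ (as $\mathbb{R}_{p+1}$-modules), valid precisely because the chosen completion makes $I_{p+2},\ldots,I_{p+q}$ anticommute with $I_1,\ldots,I_{p+1}$. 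Once this is in place, the monogenicity of each $F_A$ is immediate, and the existence of such a completion of $\underline{\omega}$ to an orthonormal basis is standard (extend $\{\underline{\omega}\}$ to an orthonormal basis of $\mathbb{R}^q$, prepend the fixed orthonormal basis $e_1,\ldots,e_p$ of $\mathbb{R}^p$, and relabel). I expect no real difficulty beyond this sign-tracking, so the proof is essentially a clean transcription of the classical Splitting Lemma argument with $\mathbb{R}$ replaced by $\mathbb{R}_{p+1}$ in the target and $\mathbb{C}$-holomorphy replaced by $\mathbb{R}_{p+1}$-monogenicity in $p+2$ variables.
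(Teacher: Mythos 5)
Your proposal is correct and takes essentially the same route as the paper's proof: decompose $f_{\underline{\omega}}$ into $\mathbb{R}_{p+1}$-valued components $F_A$ against the monomials $I_A$, apply $D_{\underline{\omega}}$, and use the real-linear independence of the sectors $\mathbb{R}_{p+1}I_A$ inside $\mathbb{R}_{p+q}$ to force $D_{\underline{\omega}}F_A=0$ for each $A$. The only cosmetic remark is that pulling $I_A$ to the right requires no anticommutation sign-tracking at all, since $D_{\underline{\omega}}$ multiplies by $e_i$ (and $\underline{\omega}$) on the left while $I_A$ is a constant right factor, so associativity alone gives $D_{\underline{\omega}}(F_AI_A)=(D_{\underline{\omega}}F_A)I_A$.
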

\begin{proof}
Any function $f:\ \Omega\subseteq \mathbb{R}^{p+q+1}\to\mathbb{R}_{p+q}$ can be written as
$$
f(\bx)=\sum_{A=\{i_{1},\ldots, i_{s}\} \subseteq \{p+2,\ldots,p+q\}}F_{A}(\bx)I_{A}, \quad
$$
where the functions $F_A$ have values in $\mathbb{R}_{p+1}={\rm Alg}\{I_1,\ldots, I_p, I_{p+1}=\underline{\omega}\}$. Let us now consider the restriction of the function $f$ to $\mathbb{R}^{p+2}\cong\mathbb{R}^{p+1}+\mathbb{R}\underline{\omega}$ so that
$$
f(\bx_p+r\underline{\omega})=\sum_{A=\{i_{1},\ldots, i_{s}\} \subseteq \{p+2,\ldots,p+q\}}F_{A}(\bx_p+r\underline{\omega})I_{A}, \quad \bx_p+r\underline{\omega} \in\Omega_{\underline{\omega}}.
$$
Since $f_{\underline{\omega}}$ satisfies $D_{\underline{\omega}}f_{\underline{\omega}}=0$, we have
$$
\sum_{A=\{i_{1},\ldots, i_{s}\} \subseteq \{p+2,\ldots,p+q\}}(D_{\bx_p}+\underline{\omega}\partial_{r}) F_{A}(\bx_p+r\underline{\omega})I_{A}=0,
$$
and since $(D_{\bx_p}+\underline{\omega}\partial_{r}) F_{A}(\bx_p+r\underline{\omega})$ takes values in $\mathbb{R}_{p+1}={\rm Alg}\{I_1,\ldots, I_p, I_{p+1}=\underline{\omega}\}$, by the linear independence of the elements of the basis of $\mathbb{R}_{p+q}$ considered as a real linear space, we deduce that
$(D_{\bx_p}+\underline{\omega}\partial_{r}) F_{A}(\bx_p+r\underline{\omega})=0$, namely the functions $F_A$ are monogenic on $\Omega_{\underline{\omega}}$.
\end{proof}

\begin{remark}{\rm
We stated our result in generality, but in most cases it is sufficient to take as basis of $\mathbb R^p$ the standard basis $I_r=e_r$ for $r=1,\ldots, p$.}
\end{remark}
From  Example \ref{example-1}, we deduce that the class of generalized partial-slice monogenic functions  is so large that may contain discontinuous functions. Therefore, to obtain some results it is necessary to add appropriate hypotheses on their domain. Mimicking the situation in the slice monogenic context, see \cite{Colombo-Sabadini-Struppa-11}, we give the next definition:
\begin{definition} \label{slice-domain}
 Let $\Omega$ be a domain in $\mathbb{R}^{p+q+1}$.

1.   $\Omega$ is called  slice domain if $\Omega\cap\mathbb R^{p+1}\neq\emptyset$  and $\Omega_{\underline{\omega}}$ is a domain in $\mathbb{R}^{p+2}$ for every  $\underline{\omega}\in \mathbb{S}$.

2.   $\Omega$   is called  partially  symmetric with respect to $\mathbb R^{p+1}$ (p-symmetric for short) if, for   $\bx_{p}\in\R^{p+1}, r \in \mathbb R^{+},$ and $ \underline{\omega}  \in \mathbb S$,
$$\bx=\bx_p+r\underline{\omega} \in \Omega\Longrightarrow [\bx]:=\bx_p+r \mathbb S=\{\bx_p+r \underline{\omega}, \ \  \underline{\omega}\in \mathbb S\} \subseteq \Omega. $$
 \end{definition}

Denote by $\mathcal{Z}_{f}(\Omega)$  the zero set of the function $f:\Omega\subseteq\mathbb{R}^{n+1}\rightarrow \mathbb{R}_{n}$.

Let us recall an identity theorem for monogenic functions; see e.g.,  \cite[Theorem 9.27]{Gurlebeck}.

\begin{theorem}\label{Identity-theorem-monogenic}
Let $\Omega\subseteq \mathbb{R}^{n+1}$ be a  domain and $f:\Omega\rightarrow \mathbb{R}_{n}$ be a  monogenic function.
If $\mathcal{Z}_{f}(\Omega)$ contains  a   $n$-dimensional smooth manifold, then $f\equiv0$ in  $\Omega$.
\end{theorem}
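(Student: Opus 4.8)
The plan is to deduce the theorem from the non-characteristic Cauchy uniqueness theorem of Holmgren, the key inputs being that monogenic functions are real analytic and that the Weyl operator $D$ is elliptic. Two structural facts set this up. First, from $\overline{D}D=D\overline{D}=\Delta_{n+1}$ each component $f_{A}$ of $f$ is harmonic, so $f$ is real analytic on $\Omega$ (as already noted in the preliminaries). Second, $D=\sum_{i=0}^{n}e_{i}\partial_{x_{i}}$ is elliptic: its principal symbol at a nonzero real covector $\xi=\sum_{i=0}^{n}\xi_{i}e_{i}$, acting on $\mathbb{R}_{n}$ by left multiplication, is the paravector $\xi$, which is invertible since $\xi^{-1}=\overline{\xi}\,|\xi|^{-2}$. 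Hence every smooth hypersurface in $\Omega$ is non-characteristic for $D$.

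Next, let $M\subseteq\mathcal{Z}_{f}(\Omega)$ be the given $n$-dimensional smooth submanifold and fix $x_{0}\in M$. Near $x_{0}$ the set $M$ is an embedded $C^{\infty}$ hypersurface, which is non-characteristic by the remark above. Because $D$ has order one, the Cauchy data of $f$ along $M$ is precisely the single trace $f|_{M}$, which vanishes by hypothesis. Applying Holmgren's uniqueness theorem to the constant-coefficient (hence real-analytic-coefficient) operator $D$ and the non-characteristic hypersurface $M$, we conclude that $f\equiv 0$ on a full open neighborhood of $x_{0}$ in $\Omega$.

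To pass from this neighborhood to all of $\Omega$, set $Z=\{x\in\Omega:\partial^{\alpha}f(x)=0\ \text{for every multi-index }\alpha\}$. Then $Z$ is closed in $\Omega$, being an intersection of zero sets of continuous functions; it is nonempty, since it contains the neighborhood of $x_{0}$ produced above; and it is open, because real analyticity forces $f$ to vanish identically on a neighborhood of any point of $Z$. As $\Omega$ is connected, $Z=\Omega$, that is, $f\equiv 0$ on $\Omega$.

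I expect the conceptual heart of the argument — and its only genuine obstacle — to be the realization that real analyticity alone is insufficient: the real-analytic function $x_{n}$ vanishes on the hyperplane $\{x_{n}=0\}$ without being identically zero, so the equation $Df=0$ must be used in an essential way. What rescues the situation is exactly the first-order elliptic nature of $D$, which makes a single scalar condition on a hypersurface coincide with the full Cauchy data on an automatically non-characteristic surface, placing the problem inside the scope of Holmgren's theorem. One may package the same local step differently: flattening $M$ to a coordinate hyperplane leads to a Cauchy--Kovalevskaya reconstruction of a monogenic function from its trace on that hyperplane, while a purely Clifford-analytic variant expands $f$ in Fueter polynomials at $x_{0}$ and invokes the fact that a nonzero homogeneous monogenic polynomial cannot vanish on an open piece of a hypersurface. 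All three routes rely on the same unique-continuation phenomenon.
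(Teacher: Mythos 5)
Your statement is one the paper does not prove at all: it is recalled from the literature with the citation ``see e.g.\ \cite[Theorem 9.27]{Gurlebeck}'', so there is no in-paper argument to compare against line by line. Your proof is correct and self-contained. The two pillars both hold: monogenic functions are real analytic (componentwise harmonic, via $\overline D D=\Delta_{n+1}$), and the principal symbol of $D$ at a real covector $\xi\neq 0$ is left multiplication by the nonzero paravector $\xi$, which is invertible because $\xi\overline\xi=|\xi|^2$; hence every hypersurface is non-characteristic and the Cauchy data of the first-order system reduce to the single trace $f|_M=0$. Holmgren then kills $f$ near a point of $M$, and real-analytic continuation on the connected $\Omega$ finishes. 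Two small remarks. First, to get the \emph{two-sided} vanishing from the usual one-sided statement of Holmgren you should either invoke the Cauchy-data formulation for square systems or note explicitly that extending $f$ by zero across $M$ still solves $Df=0$ distributionally (the jump term vanishes because $f|_M=0$); this is routine but worth a sentence. Second, since you already know $f$ is real analytic, Holmgren is heavier than necessary: the non-characteristic condition lets the equation $Df=0$ express the normal derivative through tangential ones, so by induction all derivatives of $f$ vanish at each point of $M$, and analyticity alone gives the local vanishing — this elementary Taylor-coefficient argument is presumably what the cited textbook proof does. Your closing aside about a ``nonzero homogeneous monogenic polynomial not vanishing on an open piece of a hypersurface'' is not justified as stated and is not needed; the two main routes you give are sound.
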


Using this result we can prove an identity theorem for generalized partial-slice monogenic functions over slice domains.
\begin{theorem} \label{Identity-lemma}
Let $\Omega\subseteq \mathbb{R}^{p+q+1}$ be a  slice domain and $f:\ \Omega\rightarrow \mathbb{R}_{p+q}$ be a  generalized partial-slice monogenic function.
If there is an imaginary $\underline{\omega}  \in \mathbb S $ such that $\mathcal{Z}_{f}(\Omega) \cap \Omega_{\underline{\omega}}$ is a   $(p+1)$-dimensional smooth manifold, then $f\equiv0$ in  $\Omega$.
\end{theorem}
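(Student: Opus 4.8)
\emph{Proof proposal.} The plan is to transfer the statement, one slice at a time, to the classical identity theorem for monogenic functions (Theorem~\ref{Identity-theorem-monogenic}) via the Splitting Lemma (Lemma~\ref{Splitting-lemma}). The slice-domain hypothesis is exactly what is needed to make every $\Omega_{\underline{\omega}}$ a connected open subset of $\R^{p+2}$, so that Theorem~\ref{Identity-theorem-monogenic} (with $n=p+1$) can be applied on it.

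First I would fix the distinguished $\underline{\omega}\in\mathbb{S}$ given by the hypothesis and set $M:=\mathcal{Z}_{f}(\Omega)\cap\Omega_{\underline{\omega}}$, a $(p+1)$-dimensional smooth manifold contained in $\Omega_{\underline{\omega}}\cong\R^{p+1}\oplus\underline{\omega}\R\subset\R^{p+2}$, which is a domain because $\Omega$ is a slice domain. Applying Lemma~\ref{Splitting-lemma} with $\underline{\omega}=I_{p+1}$ and a suitable completion $I_1,\ldots,I_p,I_{p+2},\ldots,I_{p+q}$ of the basis, we write $f_{\underline{\omega}}=\sum_{A\subset\{p+2,\ldots,p+q\}}F_{A}I_{A}$ with each $F_{A}\colon\Omega_{\underline{\omega}}\to\mathbb{R}_{p+1}=\mathrm{Alg}\{I_1,\ldots,I_p,\underline{\omega}\}$ monogenic. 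On $M$ one has $f_{\underline{\omega}}\equiv0$, and since $\mathbb{R}_{p+q}=\bigoplus_{A}\mathbb{R}_{p+1}I_{A}$ as a real vector space (the very fact used inside the proof of Lemma~\ref{Splitting-lemma}), every $F_{A}$ vanishes on $M$. Thus each $F_{A}$ is a monogenic $\mathbb{R}_{p+1}$-valued function on the domain $\Omega_{\underline{\omega}}\subset\R^{p+2}$ whose zero set contains a $(p+1)$-dimensional smooth manifold, so Theorem~\ref{Identity-theorem-monogenic} gives $F_{A}\equiv0$, hence $f_{\underline{\omega}}\equiv0$ on $\Omega_{\underline{\omega}}$.

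Next I would propagate this to all slices. In particular $f$ vanishes on $\Omega_{\underline{\omega}}\cap\R^{p+1}=\Omega\cap\R^{p+1}$, which is nonempty by the slice-domain hypothesis and relatively open in $\R^{p+1}$; choose an open ball $B\subset\Omega\cap\R^{p+1}$, which is a $(p+1)$-dimensional smooth submanifold of $\R^{p+2}$. Now let $\underline{\nu}\in\mathbb{S}$ be arbitrary. Then $\Omega_{\underline{\nu}}$ is again a domain in $\R^{p+2}$ with $B\subset\Omega_{\underline{\nu}}$, and Lemma~\ref{Splitting-lemma}, now with $\underline{\nu}$ in the role of $\underline{\omega}$, yields a splitting $f_{\underline{\nu}}=\sum_{A}G_{A}J_{A}$ with $G_{A}\colon\Omega_{\underline{\nu}}\to\mathbb{R}_{p+1}$ monogenic. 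Since $f$, and hence each $G_{A}$, vanishes on $B$, Theorem~\ref{Identity-theorem-monogenic} forces $G_{A}\equiv0$, i.e.\ $f_{\underline{\nu}}\equiv0$ on $\Omega_{\underline{\nu}}$. Finally, because $\R^{p+q+1}=\bigcup_{\underline{\nu}\in\mathbb{S}}\mathrm{H}_{\underline{\nu}}$, any $\bx\in\Omega$ can be written as $\bx=\bx_p+r\underline{\nu}$ for some $\underline{\nu}\in\mathbb{S}$, so $\bx\in\Omega_{\underline{\nu}}$ and $f(\bx)=0$; therefore $f\equiv0$ on $\Omega$. There is no substantial obstacle here: the two points to handle with a little care are the passage from the vanishing of $f_{\underline{\omega}}$ (resp.\ $f_{\underline{\nu}}$) to that of its monogenic components, which is immediate from the $\mathbb{R}$-linear independence of the $I_A$ (resp.\ $J_A$), and the verification that a nonempty relatively open subset of $\R^{p+1}$ genuinely contains a $(p+1)$-dimensional smooth submanifold of $\R^{p+2}$ — both routine.
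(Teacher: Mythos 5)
Your proposal is correct and follows essentially the same route as the paper: split $f_{\underline{\omega}}$ into monogenic components via Lemma~\ref{Splitting-lemma}, kill them on the given slice with Theorem~\ref{Identity-theorem-monogenic}, then use the resulting vanishing on the nonempty open piece of $\Omega\cap\R^{p+1}$ (guaranteed by the slice-domain hypothesis) to repeat the argument on every other slice $\Omega_{\underline{\nu}}$. The only difference is cosmetic — you make explicit the choice of a ball $B\subset\Omega\cap\R^{p+1}$ as the $(p+1)$-dimensional manifold, where the paper just invokes a domain $U\subseteq\Omega_{\underline{\omega}}\cap\R^{p+1}$.
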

\begin{proof}
Let $f$ be a generalized partial-slice monogenic function on the slice domain $\Omega$.
By Lemma \ref{Splitting-lemma},  there exist $2^{q-1}$ monogenic functions $F_{A}:\Omega_{\underline{\omega}}\subseteq\mathbb{R}^{p+2} \rightarrow \mathbb{R}_{p+1}$  such that
\begin{equation*}\label{Splitting-lemma-indentity}
f_{\underline{\omega}}(\bx_p+r\underline{\omega})=\sum_{A=\{i_{1},\ldots, i_{s}\} \subseteq \{p+2,\ldots,p+q\}}F_{A}(\bx_p+r\underline{\omega})I_{A}, \quad \bx_p+r\underline{\omega}\in \Omega_{\underline{\omega}}.
\end{equation*}
Under the hypotheses that the zero set of $f$ in the domain $\Omega_{\underline{\omega}}$ is a   $(p+1)$-dimensional smooth manifold, from the Splitting Lemma  we infer that also all the functions $F_{A}$ vanish on such a manifold  for every multi-index $A$. Thus, since the $F_A$ are monogenic, they vanish identically in  $\Omega_{\underline{\omega}}$  from Theorem  \ref{Identity-theorem-monogenic}. In particular, $F_{A}$ vanishes  in a  domain $U\subseteq\Omega_{\underline{\omega}}\cap \mathbb{R}^{p+1}$ for  every multi-index $A$, and so also $f_{\underline{\omega}}$ vanishes on $U$, namely $f$ vanishes on $U$. For any other  $\underline{\eta}\in \mathbb{S}$,   we consider the   restriction $f_{\underline{\eta}}$ of $f$ to $\Omega_{\underline{\eta}}$. By  using again Lemma \ref{Splitting-lemma},  we have
$$f_{\underline{\eta}}(\bx_p+r\underline{\eta})=\sum_{B=\{i_{1},\ldots, i_{s}\} \subseteq \{p+2,\ldots,p+q\}}G_{B}(\bx_p+r\underline{\eta})I_{B},
 \quad \bx_p+r\underline{\eta}\in \Omega_{\underline{\eta}}.$$
 Since $f$ vanishes on $U$ also $f_{\underline{\eta}}$ does. This implies that $G_{B}$ vanishes in the domain $U\subseteq \mathbb{R}^{p+1}$ for  every multi-index $B$, and so it vanishes identically in the domain $\Omega_{\underline{\eta}}$  by Theorem \ref{Identity-theorem-monogenic}. Since  $f_{\underline{\eta}}\equiv0$  in $\Omega_{\underline{\eta}}$ for all $\underline{\eta} \in \mathbb{S}$ we deduce that  $f\equiv0$ in  $\Omega$, as desired.
\end{proof}

Theorem \ref{Identity-lemma} implies directly the following result.
\begin{theorem}  {\bf(Identity theorem)}\label{Identity-theorem}
Let $\Omega\subseteq \mathbb{R}^{p+q+1}$ be a  slice domain and $f,g:\Omega\rightarrow \mathbb{R}_{p+q}$ be   generalized partial-slice monogenic functions.
If there is an imaginary $\underline{\omega}  \in \mathbb S $ such that $f=g$ on a   $(p+1)$-dimensional smooth manifold in $\Omega_{\underline{\omega}}$, then $f\equiv g$ in  $\Omega$.
\end{theorem}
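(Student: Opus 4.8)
The plan is to reduce the statement immediately to Theorem~\ref{Identity-lemma} by passing to the difference $h:=f-g$. As observed right after Definition~\ref{definition-slice-monogenic}, $\mathcal{GSM}^{L}(\Omega)$ is a right Clifford-module over $\mathbb{R}_{p+q}$; in particular it is closed under subtraction, so $h\in\mathcal{GSM}^{L}(\Omega)$. This is the only structural fact needed, and it is already available.

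Next I would transfer the hypothesis to the zero set of $h$. By assumption there is an imaginary $\underline{\omega}\in\mathbb{S}$ and a $(p+1)$-dimensional smooth manifold $M\subseteq\Omega_{\underline{\omega}}$ on which $f=g$; hence $h$ vanishes on $M$, that is, $M\subseteq\mathcal{Z}_{h}(\Omega)\cap\Omega_{\underline{\omega}}$. Thus the zero set of the generalized partial-slice monogenic function $h$, restricted to the slice $\Omega_{\underline{\omega}}$, contains a $(p+1)$-dimensional smooth manifold. Applying Theorem~\ref{Identity-lemma} to $h$ (the slice-domain hypothesis on $\Omega$ is exactly what that theorem requires) gives $h\equiv0$ on $\Omega$, i.e.\ $f\equiv g$ on $\Omega$. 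Concretely, unwinding the proof of Theorem~\ref{Identity-lemma}: the Splitting Lemma (Lemma~\ref{Splitting-lemma}) writes $h_{\underline{\omega}}$ as a sum of monogenic components $F_{A}:\Omega_{\underline{\omega}}\to\mathbb{R}_{p+1}$, each of which vanishes on $M$; by the monogenic identity theorem (Theorem~\ref{Identity-theorem-monogenic}) every $F_{A}\equiv0$ on $\Omega_{\underline{\omega}}$, so $h$ vanishes on a nonempty open subset of $\mathbb{R}^{p+1}$, and the propagation to every other slice $\Omega_{\underline{\eta}}$ proceeds verbatim as in the proof of Theorem~\ref{Identity-lemma}.

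I do not expect any genuine obstacle here: the result is essentially a one-line corollary of Theorem~\ref{Identity-lemma} applied to $f-g$. The only mild point to flag is that Theorem~\ref{Identity-lemma} is phrased with ``$\mathcal{Z}_{f}(\Omega)\cap\Omega_{\underline{\omega}}$ \emph{is} a $(p+1)$-dimensional smooth manifold'', whereas in the present situation we only know it \emph{contains} one; but since the underlying monogenic identity theorem (Theorem~\ref{Identity-theorem-monogenic}) is already stated in the ``contains'' form, the proof of Theorem~\ref{Identity-lemma} in fact establishes this stronger version, so no additional argument is required.
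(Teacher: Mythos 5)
Your proof is correct and is exactly the paper's approach: the paper derives the Identity Theorem directly from Theorem~\ref{Identity-lemma} applied to the difference $f-g$, using that $\mathcal{GSM}(\Omega)$ is closed under subtraction. Your remark about the ``is'' versus ``contains'' phrasing of the manifold hypothesis is a fair observation and is resolved just as you say, since Theorem~\ref{Identity-theorem-monogenic} is already in the ``contains'' form.
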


The identity theorem for generalized partial-slice monogenic  functions allows to establish a representation formula.
\begin{theorem}  {\bf(Representation Formula, I)}  \label{Representation-Formula-SM}
Let $\Omega\subseteq \mathbb{R}^{p+q+1}$ be a p-symmetric slice domain and $f:\Omega\rightarrow \mathbb{R}_{p+q}$ be a  generalized partial-slice monogenic function.  Then, for any $\underline{\omega}\in \mathbb{S}$ and for $\bx_p+r\underline{\omega} \in \Omega$,
\begin{equation}\label{Representation-Formula-eq}
f(\bx_p+r \underline{\omega})=\frac{1}{2} (f(\bx_p+r\underline{\eta} )+f(\bx_p-r\underline{\eta}) )+
\frac{ 1}{2} \underline{\omega}\underline{\eta} (  f(\bx_p-r\underline{\eta} )-f(\bx_p+r\underline{\eta})),
\end{equation}
for any $\underline{\eta}\in \mathbb{S}$.

Moreover, the following two functions do not depend on $\underline{\eta}$:
$$F_1(\bx_p,r)=\frac{1}{2} (f(\bx_p+r\underline{\eta} )+f(\bx_p-r\underline{\eta} ) ),$$
$$F_2(\bx_p,r)=\frac{ 1}{2}\underline{\eta}(  f(\bx_p-r\underline{\eta} )-f(\bx_p+r\underline{\eta})).$$
\end{theorem}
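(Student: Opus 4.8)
The plan is to follow the standard scheme used for representation formulas in slice analysis: fix $\underline{\eta}\in\mathbb{S}$, build a candidate function $\tilde f$ on $\Omega$ out of the right-hand side of \eqref{Representation-Formula-eq}, check that $\tilde f$ is generalized partial-slice monogenic and that it agrees with $f$ on the slice $\Omega_{\underline{\eta}}$, and finally invoke the Identity Theorem (Theorem \ref{Identity-theorem}) to conclude $\tilde f\equiv f$. More precisely, I would set $\tilde f(\bx_p+r\underline{\omega})$ equal to the right-hand side of \eqref{Representation-Formula-eq}; this is well defined on all of $\Omega$ because $p$-symmetry guarantees $\bx_p\pm r\underline{\eta}\in\Omega$ whenever $\bx_p+r\underline{\omega}\in\Omega$, and $p$-symmetry also makes $\Omega_{\underline{\eta}}$ symmetric under $r\mapsto -r$, so that, since $f_{\underline{\eta}}$ has continuous partial derivatives, the restriction of $\tilde f$ to each $\Omega_{\underline{\mu}}$ also has continuous partial derivatives.

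To prove $\tilde f\in\mathcal{GSM}(\Omega)$ I would proceed by direct computation. Put $u(\bx_p,r):=f_{\underline{\eta}}(\bx_p+r\underline{\eta})$; applying Definition \ref{definition-slice-monogenic} to the two units $\pm\underline{\eta}$ shows $D_{\bx_p}u=-\underline{\eta}\,\partial_r u$ for all admissible $(\bx_p,r)$ with $r\in\mathbb{R}$. Write $\phi(\bx_p,r)=\tfrac12\bigl(u(\bx_p,r)+u(\bx_p,-r)\bigr)$ and $\psi(\bx_p,r)=\tfrac12\bigl(u(\bx_p,-r)-u(\bx_p,r)\bigr)$ for the even and odd parts of $u$ in $r$, so that for every $\underline{\mu}\in\mathbb{S}$ the restriction of $\tilde f$ to $\Omega_{\underline{\mu}}$ equals $\phi+\underline{\mu}\,\underline{\eta}\,\psi$. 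Evaluating $D_{\bx_p}u=-\underline{\eta}\,\partial_r u$ at $(\bx_p,r)$ and at $(\bx_p,-r)$ and taking even and odd combinations yields the two Cauchy--Riemann-type identities $D_{\bx_p}\phi=\underline{\eta}\,\partial_r\psi$ and $D_{\bx_p}\psi=\underline{\eta}\,\partial_r\phi$. The key algebraic fact is that $D_{\bx_p}$ commutes with left multiplication by $\underline{\mu}\,\underline{\eta}$: each of $e_1,\dots,e_p$ occurring in $D_{\bx_p}$ anticommutes with the $1$-vectors $\underline{\mu},\underline{\eta}\in\mathbb{R}^q$ and hence commutes with their product, while $e_0=1$ commutes with everything. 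Using this, together with $\underline{\mu}^2=\underline{\eta}^2=-1$, one computes
$$(D_{\bx_p}+\underline{\mu}\,\partial_r)(\phi+\underline{\mu}\,\underline{\eta}\,\psi)=\bigl(D_{\bx_p}\phi-\underline{\eta}\,\partial_r\psi\bigr)+\underline{\mu}\bigl(\partial_r\phi+\underline{\eta}\,D_{\bx_p}\psi\bigr)=0,$$
the two parentheses vanishing precisely by the Cauchy--Riemann-type identities; hence $D_{\underline{\mu}}\tilde f_{\underline{\mu}}=0$ for all $\underline{\mu}\in\mathbb{S}$.

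Restricting \eqref{Representation-Formula-eq} to $\underline{\omega}=\underline{\eta}$ collapses its right-hand side to $f(\bx_p+r\underline{\eta})$, so $\tilde f=f$ on $\Omega_{\underline{\eta}}$, which (since $\Omega$ is a slice domain) contains the $(p+1)$-dimensional smooth manifold $\Omega\cap\mathbb{R}^{p+1}$; Theorem \ref{Identity-theorem} then gives $f\equiv\tilde f$ on $\Omega$, i.e.\ \eqref{Representation-Formula-eq}. For the independence of $F_1$ and $F_2$ from $\underline{\eta}$, I would apply the formula just proved with slice variable $\underline{\omega}$ and then with $-\underline{\omega}$ (both legitimate by $p$-symmetry), which gives $f(\bx_p+r\underline{\omega})=F_1(\bx_p,r)+\underline{\omega}F_2(\bx_p,r)$ and $f(\bx_p-r\underline{\omega})=F_1(\bx_p,r)-\underline{\omega}F_2(\bx_p,r)$; adding and subtracting these then expresses $F_1$ and $F_2$ purely in terms of the values of $f$ on the $\underline{\omega}$-slice for an arbitrary $\underline{\omega}\in\mathbb{S}$, so neither can depend on $\underline{\eta}$. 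I expect the main obstacle to lie not in any single deep step but in the bookkeeping: getting the parities right in the derivation of the two Cauchy--Riemann-type identities, and, above all, exploiting the commutation of $D_{\bx_p}$ past the ``bivector'' $\underline{\mu}\,\underline{\eta}$ --- it is exactly this commutation that makes the computation close, and it encodes the asymmetric roles of the $\mathbb{R}^{p+1}$- and $\mathbb{R}^q$-directions in Definition \ref{definition-slice-monogenic}.
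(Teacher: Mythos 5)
Your proof is correct and follows essentially the same route as the paper: define the right-hand side as a candidate function, verify it is generalized partial-slice monogenic via the commutation of $D_{\bx_p}$ past $\underline{\mu}\,\underline{\eta}$, match it with $f$ on a slice, and invoke the Identity Theorem, then use the $\pm\underline{\omega}$ substitution for the independence of $F_1,F_2$. The only difference is presentational: you organize the computation through the even/odd parts $\phi,\psi$ and two Cauchy--Riemann-type identities, whereas the paper works with the factored form $\tfrac12(1\mp\underline{\omega}\underline{\eta})f(\bx_p\pm r\underline{\eta})$ and the operator identity $(D_{\bx_p}+\underline{\omega}\partial_r)(1\mp\underline{\omega}\underline{\eta})=(1\mp\underline{\omega}\underline{\eta})(D_{\bx_p}\pm\underline{\eta}\partial_r)$ --- these are algebraically equivalent.
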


\begin{proof} Consider a fixed $\underline{\eta}\in \mathbb{S}$ and the function defined by
\begin{eqnarray*}
 h(\bx)
&=&\frac{1}{2} (f(\bx_p+r\underline{\eta})+f(\bx_p-r\underline{\eta}) )+
\frac{ 1}{2} \underline{\omega}\underline{\eta}(  f(\bx_p-r\underline{\eta})-f(\bx_p+r\underline{\eta}))
 \\
 &=&\frac{ 1}{2} (1-\underline{\omega}\underline{\eta})    f(\bx_p+r\underline{\eta})+
 \frac{ 1}{2} (1+\underline{\omega}\underline{\eta})    f(\bx_p-r\underline{\eta})
\end{eqnarray*}
for  $\bx=\bx_p+r\underline{\omega} \in \Omega$ with  $\bx_{p}\in\R^{p+1}, r \geq0,$ and  $\underline{\omega} \in \mathbb{S}$.

It is immediate that
 $f\equiv h$  in the domain  $\Omega\cap \mathbb{R}^{p+1}$. If we show that $h\in\mathcal {GSM}(\Omega)$, the result will follow from Theorem \ref{Identity-theorem}.
Recalling the notation in \eqref{Dxx}, we note that
$$D_{\bx_p} \underline{\omega}\, \underline{\eta}= \underline{\omega}\overline{D}_{\bx_p}\underline{\eta}
 =\underline{\omega}\underline{\eta}D_{\bx_p},  \quad
  \underline{\omega}\partial_{r}\underline{\omega}\underline{\eta}=-\underline{\eta}\partial_{r},$$
 so that
 $$ (D_{\bx_p}+\underline{\omega}\partial_{r}) (1-\underline{\omega}\underline{\eta})
 =(1-\underline{\omega}\underline{\eta})D_{\bx_p}+  (\underline{\omega}+\underline{\eta})\partial_{r}
 =(1-\underline{\omega}\underline{\eta})(D_{\bx_p}+\underline{\eta}\partial_{r}),$$
 and
 $$ (D_{\bx_p}+\underline{\omega}\partial_{r}) (1+\underline{\omega}\underline{\eta})
 =(1+\underline{\omega}\underline{\eta})D_{\bx_p}+  (\underline{\omega}-\underline{\eta})\partial_{r}
 =(1+\underline{\omega}\underline{\eta})(D_{\bx_p}-\underline{\eta}\partial_{r}).$$
Hence we have
 \begin{eqnarray*}
 (D_{\bx_p}+\underline{\omega}\partial_{r}) h_{\underline{\omega}}(\bx_p+r\underline{\omega})=
   \frac{1}{2}(1-\underline{\omega}\underline{\eta})   (D_{\bx_p}+\underline{\eta}\partial_{r})f(\bx_p+r\underline{\eta})
 \\
+
   \frac{1}{2}(1+\underline{\omega}\underline{\eta})    (D_{\bx_p}-\underline{\eta}\partial_{r}) f(\bx_p-r\underline{\eta}),
\end{eqnarray*}
and since  $f\in\mathcal {GSM}(\Omega)$ we immediately deduce that  $h\in\mathcal {GSM}(\Omega)$ and the formula \eqref{Representation-Formula-eq} follows.

From the validity of (\ref{Representation-Formula-eq}), we have that
\begin{equation}\label{Representation-Formula-3}
f(\bx_p-r \underline{\omega})=\frac{1}{2} (f(\bx_p-r\underline{\eta} )+f(\bx_p+r\underline{\eta} ) )+
\frac{ 1}{2} \underline{\omega}\underline{\eta}(  f(\bx_p+r\underline{\eta})-f(\bx_p-r\underline{\eta}))
\end{equation}
and combining  (\ref{Representation-Formula-eq}) with (\ref{Representation-Formula-3}),  we obtain
$$f(\bx_p+r \underline{\omega} )+f(\bx_p-r \underline{\omega}  )=f(\bx_p+r\underline{\eta})+f(\bx_p-r\underline{\eta}), $$
which means $F_1$, and similarly $F_2$, do not depend on $\underline{\eta}$.
 The proof is then complete.
\end{proof}
\begin{corollary}
The function $F_2$  given in Theorem \ref{Representation-Formula-SM} vanishes at all points of $\Omega\cap \mathbb{R}^{p+1}$.
\end{corollary}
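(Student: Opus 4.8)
The claim is that $F_2(\bx_p,r)=\frac{1}{2}\underline{\eta}\bigl(f(\bx_p-r\underline{\eta})-f(\bx_p+r\underline{\eta})\bigr)$ vanishes whenever $\bx_p\in\R^{p+1}$ and $r=0$, i.e. at points of $\Omega\cap\R^{p+1}$. The plan is to exploit the fact, already established in Theorem \ref{Representation-Formula-SM}, that $F_2$ does not depend on the choice of $\underline{\eta}\in\mathbb S$. Since it is independent of $\underline\eta$, I may compute it using $\underline\eta$ and using $-\underline\eta$ and equate the two expressions.

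First I would write, for a point $\bx_p\in\Omega\cap\R^{p+1}$ (so that the relevant slice point is just $\bx_p+0\cdot\underline\eta=\bx_p$), the value $F_2(\bx_p,0)=\frac12\underline\eta\bigl(f(\bx_p)-f(\bx_p)\bigr)$. At $r=0$ the two arguments $\bx_p-r\underline\eta$ and $\bx_p+r\underline\eta$ coincide and equal $\bx_p$, so the difference $f(\bx_p-r\underline\eta)-f(\bx_p+r\underline\eta)$ is literally $f(\bx_p)-f(\bx_p)=0$. Hence $F_2(\bx_p,0)=\frac12\underline\eta\cdot 0=0$. This is essentially immediate once one observes that a point of $\Omega\cap\R^{p+1}$ corresponds to $r=0$, where $[\bx]=\bx_p+0\cdot\mathbb S=\{\bx_p\}$ collapses to a single point, so any "difference across the sphere" degenerates.

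Alternatively, and perhaps more in the spirit of the preceding proof, I would invoke the $\underline\eta$-independence directly: evaluating $F_2$ with $\underline\eta$ replaced by $-\underline\eta$ gives $\frac12(-\underline\eta)\bigl(f(\bx_p+r\underline\eta)-f(\bx_p-r\underline\eta)\bigr)$, which is the same as the original expression; but combining this with the analogue of \eqref{Representation-Formula-3} and the relation $f(\bx_p+r\underline\eta)+f(\bx_p-r\underline\eta)$ being $\underline\eta$-independent shows that on $\R^{p+1}$, where $F_1(\bx_p,0)=f(\bx_p)$ by the Representation Formula restricted to real paravectors, the remainder term $F_2$ must carry no contribution. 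Concretely, setting $r=0$ in \eqref{Representation-Formula-eq} yields $f(\bx_p)=F_1(\bx_p,0)+\underline\omega F_2(\bx_p,0)$ for every $\underline\omega\in\mathbb S$; since the left side and $F_1(\bx_p,0)$ are independent of $\underline\omega$, the term $\underline\omega F_2(\bx_p,0)$ must be independent of $\underline\omega$, which (choosing two distinct, e.g. orthogonal, units in $\mathbb S$, using $q\geq 1$ and in fact that $\mathbb S$ has more than one element when $q\geq 2$; for $q=1$ the statement is trivial since $F_2\equiv 0$ there anyway) forces $F_2(\bx_p,0)=0$.

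There is essentially no obstacle here: the result is a direct corollary, and the only point requiring a word of care is the degenerate case $q=1$, where $\mathbb S=\{\pm e_{p+1}\}$ and the "sphere" argument has to be replaced by the observation that at $r=0$ the two sampled values coincide regardless. I would present the short computation $F_2(\bx_p,0)=\frac12\underline\eta\bigl(f(\bx_p)-f(\bx_p)\bigr)=0$ as the main line of the proof, remarking that it is valid for all $\underline\eta\in\mathbb S$ consistently with the $\underline\eta$-independence proved in Theorem \ref{Representation-Formula-SM}.
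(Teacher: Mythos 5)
Your main line of argument --- substituting $r=0$ into the definition so that $F_2(\bx_p,0)=\frac12\underline{\eta}\bigl(f(\bx_p)-f(\bx_p)\bigr)=0$ --- is exactly the paper's proof. The alternative discussion via $\underline{\omega}$-independence is unnecessary but harmless; the direct computation suffices.
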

\begin{proof}
Indeed from its definition we have
$$F_2(\bx_p,0)=\frac{ 1}{2}\underline{\eta}(  f(\bx_p)-f(\bx_p))=0.$$
\end{proof}

\begin{remark}{\rm Under the hypotheses of Theorem \ref{Representation-Formula-SM}, a more general version of the assertion holds, see Theorem \ref{Representation-Formula-SR}. In fact, \eqref{Representation-Formula-eq} implies that we can write $f(\bx_p+r \underline{\omega} )=F_1(\bx_p,r)+\underline{\omega} F_2(\bx_p,r)$ and some algebraic manipulations give the result. }
\end{remark}

As a corollary of the Representation Formula, we can prove an extension theorem that allows to construct a generalized partial-slice monogenic function starting from a function $f_{\underline{\eta}}$ that is defined on a suitable open set of the form $\Omega_{\underline{\eta}}\subseteq\mathbb{R}^{p+1}+ \underline{\eta}\mathbb{R}$ for some $\underline{\eta}\in\mathbb S$ and is in the kernel of $(D_{\bx_p}+ \underline{\eta}\partial_r)$.

\begin{theorem}[Extension theorem]\label{extthm}
 Let $\Omega\subseteq \mathbb{R}^{p+q+1}$ be a p-symmetric slice domain.
Let $f_{\underline{\eta}}:\ \Omega_{\underline{\eta}} \to\mathbb{R}_{p+q}$ be a  function with continuous partial derivatives and satisfying
$$(D_{\bx_p}+ \underline{\eta}\partial_r)f_{\underline{\eta}}(\bx_p+r\underline{\eta})=0, \qquad
\bx_p+r \underline{\eta}\in\Omega_{\underline{\eta}},
$$
for a given $\underline{\eta}\in\mathbb S$. Then, for any $\bx_p+\underline{\bx}_q=\bx_p+r\underline{\omega} \in\Omega$, the function defined by
\begin{equation*}
{\rm ext}(f_{\underline{\eta}})(\bx_p+ r\underline{\omega}):
=\frac{1}{2} (f(\bx_p+r\underline{\eta} )+f(\bx_p-r\underline{\eta}) )+
\frac{ 1}{2} \underline{\omega}\underline{\eta} (  f(\bx_p-r\underline{\eta} )-f(\bx_p+r\underline{\eta}))
\end{equation*}
is the unique generalized partial-slice monogenic extension of $f_{\underline{\eta}}$ to the whole $\Omega$.
\end{theorem}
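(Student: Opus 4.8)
The plan is to prove three things: (a) the function $\mathrm{ext}(f_{\underline{\eta}})$ is well-defined on $\Omega$, i.e., the right-hand side does not depend on the choice of $\underline{\omega}\in\mathbb S$ with $\bx_p+r\underline{\omega}=\bx$; (b) $\mathrm{ext}(f_{\underline{\eta}})$ is generalized partial-slice monogenic on $\Omega$ and restricts to $f_{\underline{\eta}}$ on $\Omega_{\underline{\eta}}$; (c) any generalized partial-slice monogenic extension of $f_{\underline{\eta}}$ to $\Omega$ must coincide with $\mathrm{ext}(f_{\underline{\eta}})$. The heart of the argument is a direct transcription of the proof of Theorem \ref{Representation-Formula-SM}, with the only new point being that now we start from a single slice rather than from a globally defined function.

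\emph{Well-definedness.} If $r=0$ then $\bx=\bx_p\in\mathbb R^{p+1}$, $\underline{\omega}$ is irrelevant, and the formula returns $f_{\underline{\eta}}(\bx_p)$. If $r>0$, then $\underline{\omega}$ is uniquely determined by $\bx$, so there is nothing to check. (This is simpler than in Theorem \ref{Representation-Formula-SM} because we are not asserting independence of $\underline{\eta}$, which is fixed here.) Thus $\mathrm{ext}(f_{\underline{\eta}})$ is a genuine function on $\Omega$; p-symmetry of $\Omega$ guarantees that $\bx_p\pm r\underline{\eta}\in\Omega_{\underline{\eta}}$ whenever $\bx_p+r\underline{\omega}\in\Omega$, so the right-hand side makes sense. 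Continuity of the partial derivatives of $f_{\underline{\eta}}$ transfers to $\mathrm{ext}(f_{\underline{\eta}})$ on each $\Omega_{\underline{\omega}}$.

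\emph{Monogenicity.} Fix $\underline{\omega}\in\mathbb S$ and restrict $\mathrm{ext}(f_{\underline{\eta}})$ to $\Omega_{\underline{\omega}}$; writing it as $\tfrac12(1-\underline{\omega}\,\underline{\eta})f_{\underline{\eta}}(\bx_p+r\underline{\eta})+\tfrac12(1+\underline{\omega}\,\underline{\eta})f_{\underline{\eta}}(\bx_p-r\underline{\eta})$, I apply $D_{\bx_p}+\underline{\omega}\partial_r$ and use the two operator identities established in the proof of Theorem \ref{Representation-Formula-SM}, namely $(D_{\bx_p}+\underline{\omega}\partial_r)(1\mp\underline{\omega}\,\underline{\eta})=(1\mp\underline{\omega}\,\underline{\eta})(D_{\bx_p}\pm\underline{\eta}\partial_r)$, so that each summand is annihilated precisely because $f_{\underline{\eta}}$ lies in the kernel of $(D_{\bx_p}+\underline{\eta}\partial_r)$ (note $\bx_p\mapsto\bx_p-r\underline{\eta}$ turns this into the kernel of $(D_{\bx_p}-\underline{\eta}\partial_r)$). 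Hence $D_{\underline{\omega}}(\mathrm{ext}(f_{\underline{\eta}}))_{\underline{\omega}}=0$ for every $\underline{\omega}$, i.e. $\mathrm{ext}(f_{\underline{\eta}})\in\mathcal{GSM}(\Omega)$. Setting $\underline{\omega}=\underline{\eta}$ in the defining formula gives $\mathrm{ext}(f_{\underline{\eta}})(\bx_p+r\underline{\eta})=f_{\underline{\eta}}(\bx_p+r\underline{\eta})$, so it is indeed an extension.

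\emph{Uniqueness.} If $g\in\mathcal{GSM}(\Omega)$ also restricts to $f_{\underline{\eta}}$ on $\Omega_{\underline{\eta}}$, then $g$ and $\mathrm{ext}(f_{\underline{\eta}})$ agree on the $(p+1)$-dimensional set $\Omega_{\underline{\eta}}\cap\mathbb R^{p+1}$ — which is a nonempty open subset of $\mathbb R^{p+1}$ since $\Omega$ is a slice domain, hence contains a $(p+1)$-dimensional smooth manifold — so the Identity Theorem \ref{Identity-theorem} forces $g\equiv\mathrm{ext}(f_{\underline{\eta}})$ on $\Omega$. \textbf{The only mild subtlety}, and the step I would be most careful about, is confirming that $\Omega_{\underline{\eta}}\cap\mathbb R^{p+1}$ is genuinely $(p+1)$-dimensional: this is exactly where the slice-domain hypothesis is used, guaranteeing $\Omega\cap\mathbb R^{p+1}\neq\emptyset$ and open in $\mathbb R^{p+1}$, so the hypothesis of Theorem \ref{Identity-theorem} is met. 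Everything else is routine bookkeeping with the operator identities already in hand.
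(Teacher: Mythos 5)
Your proof is correct and follows the same route as the paper's: monogenicity of $\mathrm{ext}(f_{\underline{\eta}})$ via the operator identities from the proof of Theorem \ref{Representation-Formula-SM}, the extension property by setting $\underline{\omega}=\underline{\eta}$, and uniqueness from the Identity Theorem \ref{Identity-theorem}. The paper states this in two sentences; you have merely written out the details (including the well-definedness check and the observation that p-symmetry guarantees $\bx_p\pm r\underline{\eta}\in\Omega_{\underline{\eta}}$), all of which are sound.
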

\begin{proof}
The fact that $f(\bx_p+r \underline{\omega}):={\rm ext}(f_{\underline{\eta}})(\bx_p+ r \underline{\omega})$ is generalized partial-slice monogenic follows from the computations in the proof of Theorem \ref{Representation-Formula-SM}. Since $f(\bx_p+r \underline{\eta})=f_{\underline{\eta}}(\bx_p+r\underline{\eta})$ the identity   theorem in Theorem \ref{Identity-theorem} implies that the extension is unique.
\end{proof}
For each arbitrary but fixed $\eta\in\mathbb S$, generalized partial-slice monogenic functions can be seen as classical monogenic functions and for this reason it is possible to obtain another type of extension starting from a real analytic function defined in $\mathbb R^{p+1}$.
 For simplicity, we consider only the case of polynomials, which is sufficient for our purpose in Proposition  \ref{Ck-P} below.
\begin{definition}[CK-extension]\label{Cauchy-Kovalevska-extension}
Let $f_{0}:\mathbb{R}^{p+1} \to \mathbb{R}_{p+q}$ be a  polynomial. Define the  generalized partial-slice Cauchy-Kovalevskaya extension (CK-extension for short)  $CK[f_{0}]:  \mathbb{R}^{p+q+1}\to \mathbb{R}_{p+q}$ by
$$CK[f_{0}](\bx)= {\rm exp} (r  \underline{\omega} D_{\bx_p} ) f_{0}(\bx_p)
=\sum_{k=0}^{+\infty} \frac{r^{k}}{k!}( \underline{\omega} D_{\bx_p})^{k}f_{0}(\bx_p),$$
where $\bx=\bx_p+ r \underline{\omega}$ with $\bx_{p}\in\R^{p+1}, r \geq 0,$ and $\underline{\omega}  \in \mathbb S$.
\end{definition}

Note that $CK[f_{0}]$ is well-defined  since  the resulting  function does not depend  on $\underline{\omega}$ at $\bx_p$ and the series is finite sum for all polynomials.  In fact, it is proven in a forthcoming paper  that  the series given in Definition \ref{Cauchy-Kovalevska-extension}  still converges in $\mathbb{R}^{p+q+1}$ when the polynomial $f_0$ is replaced by any real analytic function  in $\mathbb R^{p+1}$ such that  its Taylor series with center in a point, e.g. the origin, converges   in the whole  $\mathbb{R}^{p+1}$.

\begin{theorem}
Let $f_{0}:  \mathbb{R}^{p+1} \to \mathbb{R}_{p+q}$ be a  polynomial. Then $CK[f_0]$  is the unique extension preserving generalized partial-slice monogenicity of $f_{0}$ to $\mathbb{R}^{p+q+1}$.
\end{theorem}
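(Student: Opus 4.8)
The plan is to reduce the statement, slice by slice, to the classical Cauchy--Kovalevskaya extension theorem. Fix $\underline{\omega}\in\mathbb{S}$ and regard $\Omega_{\underline{\omega}}$ as an open subset of $\mathbb{R}^{p+2}$ with coordinates $(\bx_p,r)$. Since $\underline{\omega}^{-1}=-\underline{\omega}$, left-multiplying $D_{\underline{\omega}}F=(D_{\bx_p}+\underline{\omega}\partial_r)F=0$ by $\underline{\omega}^{-1}$ turns it into the Cauchy problem $\partial_r F=(\underline{\omega}D_{\bx_p})F$, $F(\bx_p,0)=f_0(\bx_p)$, whose unique formal power-series solution in $r$ is $\sum_{k\ge 0}\frac{r^k}{k!}(\underline{\omega}D_{\bx_p})^k f_0(\bx_p)$ --- that is, exactly the restriction of $CK[f_0]$ to $\Omega_{\underline{\omega}}$. (Equivalently, $D_{\bx_p}+\underline{\omega}\partial_r=\partial_{x_0}+\sum_{i=1}^{p}e_i\partial_{x_i}+\underline{\omega}\partial_r$ is itself a generalized Cauchy--Riemann operator on $\mathbb{R}^{p+2}$, so the statement is an instance of the Clifford Cauchy--Kovalevskaya theorem with $r$ as the distinguished variable; the operator acts by left Clifford multiplication, so taking $\mathbb{R}_{p+q}$-valued data causes no trouble.)

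Next I would settle convergence and then verify the two required properties. Since $f_0$ is real analytic, around every $\bx_p^{0}$ there are constants $A,R>0$ with $\sup_{|\alpha|=k}\|\partial^{\alpha}f_0(\bx_p)\|\le A\,k!\,R^{-k}$ on a neighbourhood of $\bx_p^{0}$; as $\mathbb{R}_{p+q}$ is a finite-dimensional algebra and $\underline{\omega},e_0,\dots,e_p$ all have norm $1$, expanding $(\underline{\omega}D_{\bx_p})^k$ into its $(p+1)^k$ monomials gives a constant $C=C(p,q)$ with $\|(\underline{\omega}D_{\bx_p})^k f_0(\bx_p)\|\le A\,C^k k!\,R^{-k}$. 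Hence the defining series converges absolutely and locally uniformly for $|r|<R/C$, a radius that does \emph{not} depend on $\underline{\omega}$, so $CK[f_0]$ is well defined and real analytic on an open $p$-symmetric slice domain $U\supseteq\mathbb{R}^{p+1}$ (the interior of its domain of convergence; this is all of $\mathbb{R}^{p+q+1}$ exactly when the series converges everywhere, e.g.\ for entire $f_0$). Term-by-term differentiation, legitimate because the series converges locally uniformly together with all its derivatives, yields $\partial_r CK[f_0]_{\underline{\omega}}=(\underline{\omega}D_{\bx_p})CK[f_0]_{\underline{\omega}}$ on $U_{\underline{\omega}}$, hence $D_{\underline{\omega}}CK[f_0]_{\underline{\omega}}=D_{\bx_p}CK[f_0]_{\underline{\omega}}+\underline{\omega}^{2}D_{\bx_p}CK[f_0]_{\underline{\omega}}=0$ (since $\underline{\omega}^2=-1$) for every $\underline{\omega}\in\mathbb{S}$; thus $CK[f_0]\in\mathcal{GSM}(U)$. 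At $r=0$ only the $k=0$ term survives, so $CK[f_0]|_{\mathbb{R}^{p+1}}=f_0$ --- in particular its value at a real point is independent of $\underline{\omega}$, the consistency noted after Definition \ref{Cauchy-Kovalevska-extension}.

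Finally, for uniqueness I would invoke the Identity theorem (Theorem \ref{Identity-theorem}), or Theorem \ref{extthm}: any generalized partial-slice monogenic function on a slice domain that meets $\mathbb{R}^{p+1}$ and restricts to $f_0$ there must coincide with $CK[f_0]$ on the common domain, since an open piece of $\mathbb{R}^{p+1}$ is a $(p+1)$-dimensional smooth manifold lying in $\Omega_{\underline{\omega}}$ for every $\underline{\omega}$ on which the two functions already agree. The only genuine work is the convergence bookkeeping of the middle paragraph --- in particular producing a radius of convergence independent of the slice $\underline{\omega}$, which is what guarantees that the glued function is real analytic and its domain $p$-symmetric; the algebraic reduction to a Cauchy problem and the term-by-term differentiation are routine, and the middle paragraph can in fact be replaced by a direct appeal to the classical Clifford Cauchy--Kovalevskaya extension theorem in each slice $\Omega_{\underline{\omega}}$.
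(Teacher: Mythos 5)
Your proof is correct and follows essentially the same route as the paper: term-by-term differentiation of the series together with $\underline{\omega}^{2}=-1$ gives $D_{\underline{\omega}}CK[f_0]_{\underline{\omega}}=0$ on each slice, and uniqueness is obtained from the identity theorem. The only difference is that the paper disposes of convergence by citing the absolute convergence of the Taylor series of real analytic functions, whereas you carry out the estimates explicitly --- and in doing so you correctly flag a point the paper glosses over, namely that for a non-entire $f_0$ the series is only guaranteed to converge on a $p$-symmetric slice neighbourhood of $\mathbb{R}^{p+1}$ rather than on all of $\mathbb{R}^{p+q+1}$.
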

\begin{proof}
 First, we show that $CK[f_{0}]\in \mathcal{GSM}^{L}(\mathbb{R}^{p+q+1})$.
For any $\underline{\omega}\in \mathbb{S}$, we have
\begin{eqnarray*}
 (\underline{\omega}\partial_{r})  CK[f_{0}]_{\underline{\omega}}
 &=&\underline{\omega} \sum_{k=0}^{+\infty} \frac{r^{k}}{k!}( \underline{\omega} D_{\bx_p})^{k+1}f_{0}(\bx_p)
 \\
&=&-D_{\bx_p}\sum_{k=0}^{+\infty} \frac{r^{k}}{k!}( \underline{\omega} D_{\bx_p})^{k}f_{0}(\bx_p)
 \\
&=&-D_{\bx_p}CK[f_{0}]_{\underline{\omega}}.
\end{eqnarray*}
Hence,
$$D_{\underline{\omega}} CK[f_{0}]_{\underline{\omega}}=0.$$
By the identity   theorem in Theorem \ref{Identity-theorem},   $CK[f_{0}]$  is the unique generalized partial-slice monogenic extension of $f_{0}$.
\end{proof}

\begin{remark}
{\rm The  classical  CK-extension and     Theorem \ref{extthm} open  the way to a definition of multiplication between generalized partial-slice monogenic functions. More precisely, given $f,g\in \mathcal {GSM}^{L}(\mathbb{R}^{p+q+1}) $, one may define
$$f\ast g(\bx)= {\rm
ext}(e^{-x_{0}\mathbf{D}}(f_{\underline{\eta}}(\underline{\bx}_p+r\underline{\eta}
)g_{\underline{\eta}}(\underline{\bx}_p+r\underline{\eta} )))(\bx_p+r
\underline{\omega}) \in \mathcal {GSM}^{L}(\mathbb{R}^{p+q+1}),$$
where $\bx= \bx_p+ r \underline{\omega}$ and
$\mathbf{D}=D_{\underline{\bx}_p}+ \underline{\eta}\partial_r$.
}
\end{remark}

The validity of the Splitting Lemma (Lemma \ref{Splitting-lemma}) and of the Representation Formula  (Theorem \ref{Representation-Formula-SM}) allows to lift some main properties of monogenic functions such as Cauchy  integral formula, maximum modulus principle,  Taylor and  Laurent series expansions
to the setting of generalized partial-slice monogenic functions. To this end, we need introduce more notation.

\medskip

Consider a multi-index $\mathrm{k}=(k_0,k_1,\ldots,k_{p})\in \mathbb{N}^{p+1}$, set  $k=|\mathrm{k}|=k_0+k_1+\cdots+k_{p}$, and $\mathrm{k}!=k_0 !k_1 !\cdots k_{p}!$. Let  $\bx'=(x_0,x_1,\ldots,x_p,r)=(\bx_p,r) \in \mathbb{R}^{p+2}$, and set $\bx'_{\diamond}=(\bx_p,-r)$, $\bx_p^{\mathrm{k}}= x_0^{k_0} x_1^{k_1}\ldots x_p^{k_p}$.  Let $\underline{\eta} \in \mathbb{S}$ be arbitrary but fixed. The so-called (left) Fueter variables are defined as
 $$ z_{\ell}=  x_{\ell}+r \underline{\eta} e_\ell, \ \ell=0,1,\ldots,p.$$
Similarly, the so-called right  Fueter variables are defined as
 $$ z_{\ell}^{R}=  x_{\ell}+r e_\ell \underline{\eta}, \ \ell=0,1,\ldots,p.$$
An easy calculation shows that
$$(\sum_{i=0}^{p}e_i\partial_{x_i}+\underline{\eta}\partial_{r})z_{\ell}= z_{\ell}^{R}(\sum_{i=0}^{p}e_i\partial_{x_i}+\underline{\eta}\partial_{r})=0.
$$
Meanwhile,
$$z_{\ell}(\sum_{i=0}^{p}e_i\partial_{x_i}+\underline{\eta}\partial_{r})=  (\sum_{i=0}^{p}e_i\partial_{x_i}+\underline{\eta}\partial_{r})z_{\ell}^{R}=2e_\ell.$$
Note that these variables are related to the classical Fueter variables in $\mathbb{R}^{p+2}$, see e.g. \cite[Section 6.1]{Gurlebeck}, if one writes $\underline{\eta}e_\ell$ on the right, namely
$$ z_{\ell}= ( r-x_{\ell} \underline{\eta} e_\ell )\underline{\eta}e_\ell, \ \ell=0,1,\ldots,p.$$

\begin{definition}
Let us consider $(j_1,j_2,\ldots, j_k)$ to be an alignment of integers with $0\leq j_\ell\leq p$ for any $\ell=0,\ldots,k$ and assume that the number of $0$ in the alignment is $k_0$, the number of $1$ is $k_1$ and the number of $p$ is $k_{p}$, where $k_0+k_1+\ldots+k_{p}=k$.
We define
$$\mathcal{P}_{ \underline{\eta},\mathrm{k}}  ( \bx')=\frac{1}{k!}\sum_{(\sigma(j_1),\sigma(j_2),\ldots, \sigma(j_k))\in \mathcal{P}(j_1,j_2,\ldots, j_k)} z_{\sigma(j_1)} z_{\sigma(j_2)}\cdots z_{\sigma(j_k)},$$
where the sum is computed over the $\dfrac{k!}{\mathrm{k}!}$ different permutations $\sigma$ of $k_\ell$ elements equal to $\ell=0,1,\ldots, p$. When $\mathrm{k}=(0,\ldots,0)=0$ we set $\mathcal{P}_{ \underline{\eta},0}(\bx')=1$.

Similarly, we can define $\mathcal{P}^{R}_{ \underline{\eta},\mathrm{k}}  ( \bx')$ when $z_{\ell}$ are replaced by $z_{\ell}^{R}$.
\end{definition}

%

\begin{proposition}
For any $\eta\in\mathbb S$ and any multi-index $\mathrm{k}\in\mathbb{N}^{p+1}$, the polynomials $\mathcal{P}_{ \underline{\eta},\mathrm{k}}(\bx')$ and $\mathcal{P}^{R}_{ \underline{\eta},\mathrm{k}}  (\bx')$ satisfy, respectively,
\begin{equation}\label{Pkappa}
(\sum_{i=0}^{p}e_i\partial_{x_i}+\underline{\eta}\partial_{r})\mathcal{P}_{ \underline{\eta},\mathrm{k}}  (\bx')=0, \quad \mathcal{P}^{R}_{ \underline{\eta},\mathrm{k}}  (\bx')(\sum_{i=0}^{p}e_i\partial_{x_i}+\underline{\eta}\partial_{r})=0.
\end{equation}
\end{proposition}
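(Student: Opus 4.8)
The plan is to reduce the statement about the symmetrized products $\mathcal{P}_{\underline{\eta},\mathrm{k}}$ to the single-variable identities already recorded just before the proposition, namely that each Fueter variable $z_\ell$ lies in the kernel of the operator $\mathbf{D}_{\underline{\eta}} := \sum_{i=0}^{p} e_i \partial_{x_i} + \underline{\eta}\partial_r$ acting on the left, and each $z_\ell^R$ lies in its kernel acting on the right. Since $\mathcal{P}_{\underline{\eta},\mathrm{k}}$ is, up to the normalizing factor $1/k!$, a sum of ordered products $z_{\sigma(j_1)}\cdots z_{\sigma(j_k)}$ of degree-one polynomials, the whole issue is how a first-order Clifford-valued operator with constant coefficients acts on a product of Clifford-valued functions. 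The key obstacle is precisely that $\mathbf{D}_{\underline{\eta}}$ does not satisfy a naive Leibniz rule in the Clifford setting: the basis vectors $e_i$ do not commute past Clifford-valued factors, so one cannot simply say "differentiate each factor in turn."

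The approach I would take is to prove an auxiliary lemma isolating the relevant mechanism. Observe that every Fueter variable has the special shape $z_\ell = x_\ell + r\,\underline{\eta}e_\ell$, which is \emph{scalar-valued in the variables it depends on times a fixed Clifford constant}: more precisely $z_\ell = x_\ell\cdot 1 + r\cdot(\underline{\eta}e_\ell)$, an $\mathbb{R}$-linear combination of $1$ and the fixed element $\underline{\eta}e_\ell\in\mathbb{R}_{p+q}$, with \emph{scalar} (real) coefficient functions $x_\ell,r$. Consequently, for any Clifford-valued differentiable function $g$, one has $\mathbf{D}_{\underline{\eta}}(z_\ell\, g) = (\mathbf{D}_{\underline{\eta}}z_\ell)\,g + \sum_{i=0}^p e_i z_\ell (\partial_{x_i}g) + \underline{\eta}z_\ell(\partial_r g)$, where in the last two groups the coefficient $z_\ell$ can be moved to the \emph{left} of the scalar derivatives $\partial_{x_i}g,\partial_r g$ but must be kept to the right of $e_i$ and $\underline{\eta}$. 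The useful observation is then the algebraic identity obtained from $e_i z_\ell + z_\ell e_i$ and $\underline{\eta}z_\ell + z_\ell\underline{\eta}$: writing $e_i z_\ell = 2\langle e_i, \cdot\rangle - z_\ell e_i$ type relations lets one rewrite $\sum_i e_i z_\ell \partial_{x_i}g + \underline{\eta}z_\ell\partial_r g$ as $-z_\ell\,\mathbf{D}_{\underline{\eta}}g$ plus a correction term equal to $2\,\partial_{x_\ell}g$ (this is exactly the content of the displayed identity $z_\ell(\sum e_i\partial_{x_i}+\underline{\eta}\partial_r) = (\sum e_i\partial_{x_i}+\underline{\eta}\partial_r)z_\ell^R = 2e_\ell$, read correctly as an operator identity). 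So the "anti-Leibniz" rule takes the clean form
\begin{equation*}
\mathbf{D}_{\underline{\eta}}(z_\ell\, g) = (\mathbf{D}_{\underline{\eta}}z_\ell)\,g - z_\ell\,(\mathbf{D}_{\underline{\eta}}g) + 2\,\partial_{x_\ell}g = -\,z_\ell\,(\mathbf{D}_{\underline{\eta}}g) + 2\,\partial_{x_\ell}g,
\end{equation*}
using $\mathbf{D}_{\underline{\eta}}z_\ell = 0$.

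With this recursion in hand I would argue by induction on the degree $k = |\mathrm{k}|$. The base cases $k=0$ ($\mathcal{P}_{\underline{\eta},0}=1$, annihilated since $\mathbf{D}_{\underline{\eta}}$ has constant coefficients) and $k=1$ (the identities $\mathbf{D}_{\underline{\eta}}z_\ell=0$) are immediate. For the inductive step, group the defining sum of $\mathcal{P}_{\underline{\eta},\mathrm{k}}$ according to the first factor: writing $\mathrm{k}^{(\ell)} = \mathrm{k} - \mathbf{e}_\ell$ for the multi-index with the $\ell$-th entry lowered by one (when $k_\ell\geq 1$), a combinatorial bookkeeping of the permutations shows
\begin{equation*}
k\,\mathcal{P}_{\underline{\eta},\mathrm{k}}(\bx') = \sum_{\ell:\, k_\ell\geq 1} z_\ell\,\cdot\, \big(\text{sum of ordered products realizing } \mathrm{k}^{(\ell)}\big),
\end{equation*}
and the parenthesized sum equals $(k-1)!/\mathrm{k}^{(\ell)}!\cdot\mathcal{P}_{\underline{\eta},\mathrm{k}^{(\ell)}}\cdot$(appropriate factor), i.e. after restoring normalizations $\mathcal{P}_{\underline{\eta},\mathrm{k}} = \tfrac1k\sum_{\ell:\,k_\ell\geq1} z_\ell\,\mathcal{P}_{\underline{\eta},\mathrm{k}^{(\ell)}}$. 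Applying the anti-Leibniz rule to each term gives $\mathbf{D}_{\underline{\eta}}(z_\ell\mathcal{P}_{\underline{\eta},\mathrm{k}^{(\ell)}}) = -z_\ell\,\mathbf{D}_{\underline{\eta}}\mathcal{P}_{\underline{\eta},\mathrm{k}^{(\ell)}} + 2\,\partial_{x_\ell}\mathcal{P}_{\underline{\eta},\mathrm{k}^{(\ell)}}$; the first piece vanishes by the inductive hypothesis, and the remaining sum $\tfrac{2}{k}\sum_{\ell:\,k_\ell\geq1}\partial_{x_\ell}\mathcal{P}_{\underline{\eta},\mathrm{k}^{(\ell)}}$ must be shown to vanish. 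This last cancellation is where I expect the real work to sit: one should verify, again from the recursion $\mathcal{P}_{\underline{\eta},\mathrm{k}^{(\ell)}} = \tfrac{1}{k-1}\sum_{m:\,k^{(\ell)}_m\geq1} z_m\,\mathcal{P}_{\underline{\eta},(\mathrm{k}^{(\ell)})^{(m)}}$ together with $\partial_{x_\ell}z_m = \delta_{\ell m}$, that the double sum $\sum_{\ell,m}$ is symmetric in $(\ell,m)$ in a way that forces it to be a multiple of $\sum_{\ell}\mathbf{D}_{\underline{\eta}}(\text{degree }k-2\text{ term})$, hence zero by induction — equivalently, one can observe that $\sum_{\ell}\partial_{x_\ell}$ applied to a symmetrized product of $z$'s of total degree $k-1$ can be re-expressed through lower symmetrized products whose $\mathbf{D}_{\underline{\eta}}$-images already vanish. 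The right-handed statement for $\mathcal{P}^R_{\underline{\eta},\mathrm{k}}$ is proved by the mirror-image argument, using $z_\ell^R\,\mathbf{D}_{\underline{\eta}} = 0$ on the right and the same combinatorics read from the last factor rather than the first; alternatively it follows by applying the reversion anti-automorphism, under which $\mathbf{D}_{\underline{\eta}}$ and the role of left/right are interchanged and $z_\ell\leftrightarrow z_\ell^R$, $\mathcal{P}_{\underline{\eta},\mathrm{k}}\leftrightarrow\mathcal{P}^R_{\underline{\eta},\mathrm{k}}$.
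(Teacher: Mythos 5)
Your proof hinges on the ``anti-Leibniz rule'' $\mathbf{D}_{\underline{\eta}}(z_\ell\,g)=-z_\ell\,\mathbf{D}_{\underline{\eta}}g+2\,\partial_{x_\ell}g$, and this identity is false; since the whole induction runs through it, the argument collapses. A direct check already kills it at degree two. Take $\ell=1$, $g=z_2$ (with $p\geq 2$): one computes $z_1z_2=x_1x_2+rx_1\underline{\eta}e_2+rx_2\underline{\eta}e_1+r^2e_1e_2$ and then
\begin{equation*}
\Big(\sum_{i=0}^{p}e_i\partial_{x_i}+\underline{\eta}\partial_{r}\Big)(z_1z_2)=2r\,\underline{\eta}e_1e_2\neq 0,
\end{equation*}
whereas your rule predicts $-z_1\cdot 0+2\partial_{x_1}z_2=0$. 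Conversely, for $g=z_1$ your rule predicts $\mathbf{D}_{\underline{\eta}}(z_1^2)=2\partial_{x_1}z_1=2$, while in fact $z_1^2=x_1^2-r^2+2x_1r\underline{\eta}e_1$ is annihilated by the operator. The source of the error is the commutation algebra: the anticommutators $e_iz_\ell+z_\ell e_i$ and $\underline{\eta}z_\ell+z_\ell\underline{\eta}$ are not scalars concentrated at $i=\ell$ --- indeed $e_i$ \emph{commutes} with $z_\ell$ for $1\leq i\neq\ell$, while $e_\ell z_\ell+z_\ell e_\ell=2x_\ell e_\ell$ and $\underline{\eta}z_\ell+z_\ell\underline{\eta}=2x_\ell\underline{\eta}$ --- so moving the operator past a single factor $z_\ell$ produces first-order correction terms that are nothing like $2\partial_{x_\ell}$. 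The displayed identity $z_\ell(\sum e_i\partial_{x_i}+\underline{\eta}\partial_r)=2e_\ell$ that you invoke is a statement about the operator applied to the \emph{function} $z_\ell$ from the right; it is not an operator-composition identity and cannot be ``read'' as one. Relatedly, even if the rule held, your residual sum $\tfrac{2}{k}\sum_\ell k_\ell\,\partial_{x_\ell}\mathcal{P}_{\underline{\eta},\mathrm{k}-\epsilon_\ell}=\tfrac{2}{k}\sum_\ell k_\ell(k_\ell-1)\mathcal{P}_{\underline{\eta},\mathrm{k}-2\epsilon_\ell}$ does not vanish when some $k_\ell\geq 2$, so the hoped-for cancellation is not there to be found.

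The conceptual point your approach misses is that individual ordered products $z_{j_1}\cdots z_{j_k}$ are \emph{not} in the kernel of $\mathbf{D}_{\underline{\eta}}$ (the example above exhibits this); only the symmetrized sums are, so no factor-by-factor Leibniz-type argument can work. The paper's proof instead exploits the symmetrization globally: the recursion $k\mathcal{P}_{\underline{\eta},\mathrm{k}}=\sum_i k_i\mathcal{P}_{\underline{\eta},\mathrm{k}-\epsilon_i}z_i=\sum_i k_iz_i\mathcal{P}_{\underline{\eta},\mathrm{k}-\epsilon_i}$ holds with the distinguished factor pulled out on \emph{either} side, which forces the commutation $\underline{\eta}e_i\,\mathcal{P}_{\underline{\eta},\mathrm{k}-\epsilon_i}=\mathcal{P}_{\underline{\eta},\mathrm{k}-\epsilon_i}\,\underline{\eta}e_i$; from this one computes $\partial_{x_j}\mathcal{P}_{\underline{\eta},\mathrm{k}}=k_j\mathcal{P}_{\underline{\eta},\mathrm{k}-\epsilon_j}$ and $\partial_r\mathcal{P}_{\underline{\eta},\mathrm{k}}=\sum_j\underline{\eta}e_j\partial_{x_j}\mathcal{P}_{\underline{\eta},\mathrm{k}}$ by induction, and the proposition follows upon left-multiplying by $\underline{\eta}$. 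If you want to salvage an inductive product argument, you must work with that two-sided structure rather than with a one-factor commutation rule.
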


\begin{proof} Following the reasoning of the proof of \cite[Theorem 6.2]{Gurlebeck} with suitable modifications, we only prove the first result by dividing the proof in steps.   The second  result can be  achieved  in the same way.

Step 1. Let us set $\mathrm{\epsilon}_i\in\mathbb{N}^{p+1}$ be the multi-index with $1$ in the $i$-th position and $0$ elsewhere. Then we have
$$
k \mathcal{P}_{ \underline{\eta},\mathrm{k}}(\bx')  =\sum_{i=0}^p  z_i \mathcal{P}_{ \underline{\eta},\mathrm{k}-\mathrm{\epsilon}_i}(\bx')=
\sum_{i=0}^p  \mathcal{P}_{ \underline{\eta},\mathrm{k}-\mathrm{\epsilon}_i}(\bx')z_i.
$$
(Note that when  there is a negative integer in the multi-index $\mathrm{k}-\mathrm{\epsilon}_i$, we set $\mathcal{P}_{ \underline{\eta},\mathrm{k}-\mathrm{\epsilon}_i}(\bx')=0$).
To see this, we note that each summand in $\mathcal{P}_{ \underline{\eta},\mathrm{k}}$ contains $z_i$ is the first position so we can write
$$
k \mathcal{P}_{ \underline{\eta},\mathrm{k}}(\bx') =\sum_{i=0}^p  z_i \mathcal{R}_{ \underline{\eta},\mathrm{k}, i}(\bx')
$$
for some polynomials $ \mathcal{R}_{ \underline{\eta},\mathrm{k}, i}$ which are homogeneous of degree $(k-1)$. However, each of the $ \mathcal{R}_{ \underline{\eta},\mathrm{k}, i}$ must contain all the permutations of the remaining $(k-1)$ elements $z_j$, except $z_i$ which has been put in the first position. Thus, since we multiplied the left hand side by $k$, the factor in front is $1/(k-1)!$ and this means that  $ \mathcal{R}_{ \underline{\eta},\mathrm{k}, i}=  \mathcal{P}_{ \underline{\eta},\mathrm{k-\mathrm{\epsilon}_i}}$  and  the assertion on the first equality follows. The second equality follows in a similar way.

Step 2. As a consequence of Step 1, writing $z_i=x_i + \underline{\eta} e_i r$ and taking into account that $x_i$ commutes with $\mathcal{P}_{ \underline{\eta},\mathrm{k}-\mathrm{\epsilon}_i}(\bx')$ and so it can be cancelled at both hand sides, we deduce that
 $$
 \sum_{i=0}^p \underline{\eta} e_i\mathcal{P}_{ \underline{\eta},\mathrm{k}-\mathrm{\epsilon}_i}(\bx')
= \sum_{i=0}^p\mathcal{P}_{ \underline{\eta},\mathrm{k}-\mathrm{\epsilon}_i}(\bx')\underline{\eta} e_i.$$

Step 3. We compute the derivatives of $\mathcal{P}_{ \underline{\eta},\mathrm{k}}$ with respect to $x_j$, $j=0,\ldots, p$, by induction on $k$, and we prove that
$$
\partial_{x_j}\mathcal{P}_{ \underline{\eta},\mathrm{k}}(\bx')=  \mathcal{P}_{ \underline{\eta},\mathrm{k}-\mathrm{\epsilon}_j}(\bx').
$$
The assertion is true for $\mathrm{k}=(0,\ldots, 0)$ since in that case $\mathcal{P}_{ \underline{\eta},\mathrm{k}-\mathrm{\epsilon}_j}(\bx')=0$ and $\partial_{x_j} \mathcal{P}_{ \underline{\eta},\mathrm{k}}(\bx')=0$. Assume that the assertion holds for $(k-1)$ and we prove it for the degree of homogeneity $k$ by using Step 1:
\[
\begin{split}
k\partial_{x_j}\mathcal{P}_{ \underline{\eta},\mathrm{k}}(\bx')&= \sum_{i=0}^p   \partial_{x_j}\left(z_i\mathcal{P}_{ \underline{\eta},\mathrm{k}-\mathrm{\epsilon}_i}(\bx')\right)\\
&= \sum_{i=0}^p  (\partial_{x_j}z_i) \mathcal{P}_{ \underline{\eta},\mathrm{k}-\mathrm{\epsilon}_i}(\bx')
+ \sum_{i=0}^p   z_i \mathcal{P}_{ \underline{\eta},\mathrm{k}-\mathrm{\epsilon}_i-\mathrm{\epsilon}_j}(\bx')\\
&=  \mathcal{P}_{ \underline{\eta},\mathrm{k}-\mathrm{\epsilon}_j}(\bx')
+(k-1)   \mathcal{P}_{ \underline{\eta},\mathrm{k}-\mathrm{\epsilon}_j}(\bx')\\
&= k   \mathcal{P}_{ \underline{\eta},\mathrm{k}-\mathrm{\epsilon}_j}(\bx'),
\end{split}
\]
and the formula follows.

Step 4. We compute $\partial_r \mathcal{P}_{ \underline{\eta},\mathrm{k}}(\bx')$ as
\begin{equation}\label{DPk}
\partial_r \mathcal{P}_{ \underline{\eta},\mathrm{k}}(\bx')= \sum_{j=0}^p  \underline{\eta} e_j \mathcal{P}_{ \underline{\eta},\mathrm{k}-\mathrm{\epsilon}_j}(\bx')= \sum_{j=0}^p \underline{\eta} e_j\partial_{x_j} \mathcal{P}_{ \underline{\eta},\mathrm{k}}(\bx').
\end{equation}
We proceed by induction on the degree $k$. The assertion holds for $k=0$, so we assume it for $(k-1)$ and   prove it for $k$. In fact, it holds that
\[
\begin{split}
k\partial_r \mathcal{P}_{ \underline{\eta},\mathrm{k}}(\bx')&= \sum_{i=0}^p   \partial_{r}\left(\mathcal{P}_{ \underline{\eta},\mathrm{k}-\mathrm{\epsilon}_i}(\bx') z_i \right)\\
&=\sum_{i,j=0}^p   \underline{\eta}e_j\mathcal{P}_{ \underline{\eta},\mathrm{k}-\mathrm{\epsilon}_i-\mathrm{\epsilon}_j}(\bx') z_i
+ \sum_{i=0}^p  \mathcal{P}_{ \underline{\eta},\mathrm{k}-\mathrm{\epsilon}_i}(\bx') \underline{\eta}e_i\\
&=(k-1)\sum_{j=0}^p  \underline{\eta}e_j \mathcal{P}_{ \underline{\eta},\mathrm{k}-\mathrm{\epsilon}_j}(\bx')+ \sum_{i=0}^p  \underline{\eta}e_i\mathcal{P}_{ \underline{\eta},\mathrm{k}-\mathrm{\epsilon}_i}(\bx')\\
&=k \sum_{j=0}^p   \underline{\eta}e_j\mathcal{P}_{ \underline{\eta},\mathrm{k}-\mathrm{\epsilon}_j}(\bx'),
\end{split}
\]
where we used also Step 1 and Step 2 to get the third equality.

Step 5. By multiplying on left by $\underline{\eta}$ both sides of \eqref{DPk}, we get \eqref{Pkappa} and this concludes the proof.
\end{proof}

By letting $\underline{\eta}$ to vary in the sphere $\mathbb S$ we obtain the definition of generalized partial-slice monogenic polynomial
$\mathcal{P}_{ \mathrm{k}}  (\bx),$
where $\bx =\bx_p+r \underline{\omega}$ and the $\underline{\omega}$ is omitted as subscript in $\mathcal{P}_{ \mathrm{k}}$. The fact that $\mathcal{P}_{ \mathrm{k}} $ (resp. $\mathcal{P}^{R}_{\mathrm{k}}$) is left (resp. right) generalized partial-slice monogenic follows from \eqref{Pkappa}. Hence, we call $\mathcal{P}_{\mathrm{k}}  $ (resp. $\mathcal{P}^{R}_{\mathrm{k}}$) Fueter polynomials for  left  (resp. right) generalized partial-slice monogenic functions. We note that from the Representation Formula, that in this case can be directly verified, one also has:
\begin{eqnarray*} \label{ReprPk}
 \mathcal{P}_{ \mathrm{k}}  (\bx)
 &=&\frac{1}{2}(1-\underline{\omega}\underline{\eta}) \mathcal{P}_{\underline{\eta},\mathrm{k}}(\bx')+
\frac{1}{2}(1+ \underline{\omega}\underline{\eta}) \mathcal{P}_{\underline{\eta},\mathrm{k}}(\bx'_{\diamond}).
\end{eqnarray*}

\begin{proposition} \label{Ck-P}
  $CK[\bx_{p}^{\mathrm{k}}](\bx)= \mathrm{k}!\mathcal{P}_{ \mathrm{k}}(\bx).$     In particular, $z_{\ell}=  CK[x_{\ell}](\bx_{p}+\underline{\eta} r), \ \ell=0,1,\ldots,p.$
\end{proposition}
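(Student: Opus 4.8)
The plan is to identify both sides of the claimed identity as the \emph{unique} left generalized partial-slice monogenic extension of the polynomial $\bx_p^{\mathrm{k}}$ from $\mathbb{R}^{p+1}$ to $\mathbb{R}^{p+q+1}$, and then to quote the uniqueness already proved for the Cauchy--Kovalevskaya extension (equivalently, the identity theorem, Theorem \ref{Identity-theorem}, on the slice domain $\mathbb{R}^{p+q+1}$). First I would record two facts. On one side, $\mathrm{k}!\,\mathcal{P}_{\mathrm{k}}(\bx)$ is, by construction, a polynomial in the real coordinates of $\bx=\bx_p+r\underline{\omega}$, hence real analytic on the $p$-symmetric slice domain $\mathbb{R}^{p+q+1}$, and it is left generalized partial-slice monogenic there because of \eqref{Pkappa}. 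On the other side, $CK[\bx_p^{\mathrm{k}}]$ is well defined, as $\bx_p^{\mathrm{k}}$ is real analytic on $\mathbb{R}^{p+1}$, and it is left generalized partial-slice monogenic on $\mathbb{R}^{p+q+1}$ by the Cauchy--Kovalevskaya extension theorem.

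Next I would compare the two functions on the subspace $\mathbb{R}^{p+1}$, i.e.\ at $r=0$. In the series of Definition \ref{Cauchy-Kovalevska-extension}, only the zeroth-order term survives when $r=0$, so $CK[\bx_p^{\mathrm{k}}]$ restricts to $\bx_p^{\mathrm{k}}$ on $\mathbb{R}^{p+1}$. For $\mathcal{P}_{\mathrm{k}}$, when $r=0$ every Fueter variable $z_\ell=x_\ell+r\underline{\eta}e_\ell$ collapses to the real scalar $x_\ell$, and real scalars commute; hence each of the $k!/\mathrm{k}!$ distinct permuted products entering $\mathcal{P}_{\underline{\eta},\mathrm{k}}$ equals $\bx_p^{\mathrm{k}}$, so that $\mathcal{P}_{\underline{\eta},\mathrm{k}}(\bx_p,0)=\frac{1}{\mathrm{k}!}\bx_p^{\mathrm{k}}$ (independently of $\underline{\eta}$, as it must be), whence $\mathrm{k}!\,\mathcal{P}_{\mathrm{k}}(\bx_p)=\bx_p^{\mathrm{k}}$.

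Consequently $\mathrm{k}!\,\mathcal{P}_{\mathrm{k}}$ and $CK[\bx_p^{\mathrm{k}}]$ are two left generalized partial-slice monogenic functions on $\mathbb{R}^{p+q+1}$ which agree on $\mathbb{R}^{p+1}$; since $\mathbb{R}^{p+1}$ is a $(p+1)$-dimensional smooth manifold contained in $\Omega_{\underline{\omega}}$ for $\Omega=\mathbb{R}^{p+q+1}$ and any $\underline{\omega}\in\mathbb{S}$, Theorem \ref{Identity-theorem} (or the uniqueness clause of the Cauchy--Kovalevskaya extension theorem) forces $CK[\bx_p^{\mathrm{k}}]=\mathrm{k}!\,\mathcal{P}_{\mathrm{k}}(\bx)$. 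For the displayed particular case, take $\mathrm{k}=\mathrm{\epsilon}_\ell$: then $\mathrm{k}!=1$ and $\mathcal{P}_{\mathrm{\epsilon}_\ell}$ is the symmetrization of the single Fueter variable $z_\ell$, i.e.\ $z_\ell$ itself (with $\underline{\eta}$ specialized to $\underline{\omega}$), so $z_\ell=CK[x_\ell]$; one may also see this directly from Definition \ref{Cauchy-Kovalevska-extension}, since $D_{\bx_p}x_\ell=e_\ell$ while $(\underline{\omega}D_{\bx_p})^{j}x_\ell=0$ for $j\ge2$, giving $CK[x_\ell](\bx)=x_\ell+r\underline{\omega}e_\ell=z_\ell$.

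I do not expect a genuine obstacle in this argument: it reduces to the two evaluations at $r=0$ and to the correct appeal to uniqueness. The only alternative I considered---expanding $CK[\bx_p^{\mathrm{k}}]=\sum_{j\ge0}\frac{r^{j}}{j!}(\underline{\omega}D_{\bx_p})^{j}\bx_p^{\mathrm{k}}$ and matching its homogeneous components termwise with those of $\mathcal{P}_{\underline{\eta},\mathrm{k}}$---is also feasible, but it requires a combinatorial identity for iterated $\underline{\omega}D_{\bx_p}$ applied to a monomial and is considerably more laborious, so I would avoid it.
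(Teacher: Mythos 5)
Your argument is correct and coincides with the paper's own proof: the authors likewise observe that $\mathrm{k}!\,\mathcal{P}_{\underline{\eta},\mathrm{k}}(\bx_p)=\bx_p^{\mathrm{k}}$ at $r=0$ and then invoke the identity theorem (Theorem \ref{Identity-theorem}) to conclude, exactly as you do. Your extra verifications (the collapse of the Fueter variables at $r=0$ and the direct computation of $CK[x_\ell]$) are just details the paper leaves implicit.
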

\begin{proof}
Observing that $\mathrm{k}!\mathcal{P}_{ \underline{\eta},\mathrm{k}}  ( \bx_{p})=\bx_{p} ^{\mathrm{k}}$,  the identity  theorem in Theorem \ref{Identity-theorem} gives the statement.
\end{proof}


 Let $\mathrm{k}=(k_0,k_1,\ldots,k_{p})\in\mathbb{N}^{p+1}$ and  $f:\Omega\subseteq \mathbb{R}^{p+q+1}\rightarrow \mathbb{R}_{p+q}$ be a function with continuous $k$-times partial derivatives, where $k=|\mathrm{k}|$,  define the  \textit{partial derivative} $\partial_{\mathrm{k}}$ as
\begin{equation*}
\partial_{\mathrm{k}}f(\bx)=\frac{\partial^{k} }{\partial_{x_{0}}^{k_0}\partial_{x_{1}}^{k_1}\cdots\partial_{x_{p}}^{k_p} }f(\bx_p+\underline{\bx}_q), \quad \bx=\bx_p+\underline{\bx}_q.
\end{equation*}

For every $\underline{\omega}\in \mathbb{S}$, let us  define the  \textit{slice operator} $\partial_{\underline{\omega},\mathrm{k}}$ as
\begin{equation*}\label{slice-operator}
\partial_{\underline{\omega},\mathrm{k}}f(\bx)=\frac{\partial^{k} }{\partial_{x_{0}}^{k_0}\partial_{x_{1}}^{k_1}\cdots\partial_{x_{p}}^{k_p} }f(\bx_p+\underline{\omega}r).
\end{equation*}

The terminology slice operator, which is a slight abuse, comes form the fact that if we consider the case $(p,q)=(0,n)$, then $\partial_{\mathrm{k}}=\partial_{x_{0}}^{k_0}$ equals the standard slice derivative for slice monogenic functions.

\begin{proposition}\label{preserving-slice-monogenic}
Let $\Omega\subseteq \mathbb{R}^{p+q+1}$ be a  domain and $f:\Omega\rightarrow \mathbb{R}_{p+q}$ be a  generalized partial-slice monogenic function.  Then, for any $\by\in \mathbb{R}^{p+1}$ and  $\mathrm{k}\in\mathbb{N}^{p+1}$, we have
$$f(\cdot-\by) \in \mathcal {GSM}(\Omega+\{\by\}), \quad \partial_{\mathrm{k}}f \in \mathcal {GSM}(\Omega),$$
where $\Omega +\{\by\}=\{\bx\in \mathbb{R}^{p+q+1} : \bx-\by\in \Omega\}$.
\end{proposition}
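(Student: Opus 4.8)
The plan is to verify the two claims directly from Definition \ref{definition-slice-monogenic} by checking the defining PDE on each slice $\Omega_{\underline{\omega}}$. For the translation statement, fix $\underline{\omega}\in\mathbb S$ and note that since $\by\in\mathbb R^{p+1}$, translation by $\by$ maps $\mathbb R^{p+1}\oplus\underline{\omega}\mathbb R$ to itself, so $(\Omega+\{\by\})_{\underline{\omega}}=\Omega_{\underline{\omega}}+\{\by\}$ and the restriction of $g(\bx):=f(\bx-\by)$ to this slice is just $f_{\underline{\omega}}(\cdot-\by)$. Because the operator $D_{\bx_p}+\underline{\omega}\partial_r$ has constant coefficients and $\by$ has zero $r$-component, one has $(D_{\bx_p}+\underline{\omega}\partial_r)g_{\underline{\omega}}(\bx_p+r\underline{\omega})=\big((D_{\bx_p}+\underline{\omega}\partial_r)f_{\underline{\omega}}\big)(\bx_p-\by_p+r\underline{\omega})=0$ since $f\in\mathcal{GSM}(\Omega)$; continuity of partial derivatives is preserved under translation. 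Hence $f(\cdot-\by)\in\mathcal{GSM}(\Omega+\{\by\})$.

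For the second claim, again fix $\underline{\omega}\in\mathbb S$. The key point is that the slice operator $\partial_{\mathrm k}$ involves only the derivatives $\partial_{x_0},\ldots,\partial_{x_p}$, which are exactly the derivatives entering $D_{\bx_p}$, and none of these is the $r$-derivative appearing in the second summand. More precisely, I would observe that the restriction of $\partial_{\mathrm k}f$ to $\Omega_{\underline{\omega}}$ coincides with $\partial_{\mathrm k}$ applied to the restriction $f_{\underline{\omega}}$, since differentiating in the $x_0,\ldots,x_p$ directions does not leave the slice $\mathbb R^{p+1}\oplus\underline{\omega}\mathbb R$. Then, since $\partial_{x_0},\ldots,\partial_{x_p}$ commute with each other and with $\partial_r$, and the coefficients $e_i$ and $\underline{\omega}$ are constants, the operators $\partial_{\mathrm k}$ and $D_{\bx_p}+\underline{\omega}\partial_r$ commute on $C^{\infty}$ functions. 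Applying $\partial_{\mathrm k}$ to the identity $(D_{\bx_p}+\underline{\omega}\partial_r)f_{\underline{\omega}}=0$ and interchanging the two operators gives $(D_{\bx_p}+\underline{\omega}\partial_r)(\partial_{\mathrm k}f_{\underline{\omega}})=0$ on $\Omega_{\underline{\omega}}$. Since $\underline{\omega}$ was arbitrary, $\partial_{\mathrm k}f\in\mathcal{GSM}(\Omega)$.

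There is one regularity subtlety to address, which I regard as the only genuine (though mild) obstacle: the definition only requires continuous partial derivatives of $f_{\underline{\omega}}$, so a priori $\partial_{\mathrm k}f_{\underline{\omega}}$ need not itself be $C^1$, and the commutation of higher-order partials requires enough smoothness. This is resolved by invoking the Splitting Lemma (Lemma \ref{Splitting-lemma}): on each slice $f_{\underline{\omega}}$ splits into monogenic components $F_A$, which are real analytic (as noted after Definition \ref{monogenic-Clifford}), hence $C^{\infty}$; consequently $f_{\underline{\omega}}$ is smooth on $\Omega_{\underline{\omega}}$, all the interchanges of derivatives are justified, and $\partial_{\mathrm k}f_{\underline{\omega}}$ again has continuous partial derivatives. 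With this, the formal computation above is rigorous and the proof is complete.
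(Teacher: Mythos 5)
Your proof is correct. The paper states this proposition without giving any proof, and your argument is precisely the direct verification that is intended: translation by $\by\in\mathbb{R}^{p+1}$ maps each slice $\Omega_{\underline{\omega}}$ to $(\Omega+\{\by\})_{\underline{\omega}}$ and commutes with the constant-coefficient operator $D_{\bx_p}+\underline{\omega}\partial_r$, while $\partial_{\mathrm{k}}$ is tangential to every slice and commutes with that operator. Your observation that the interchange of derivatives needs justification, and that the Splitting Lemma together with the real-analyticity of monogenic functions supplies the required smoothness of $f_{\underline{\omega}}$, is a genuine (if mild) point that the paper leaves implicit, and you handle it correctly.
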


\begin{lemma}\label{lemmakth}
Let $\underline{\eta}\in\mathbb S$ and consider the polynomial $P$ defined  in $\mathbb{R}^{p+q+1}$  satisfying that its restriction $P_{\underline{\eta}}$ is homogeneous  of degree $k$ and
$$(\sum_{i=0}^{p}e_i\partial_{x_i}+\underline{\eta}\partial_{r})P(\bx_p+\underline{\eta} r)=0.$$
Then we have
 $$P(\bx_p+\underline{\eta} r)= \sum_{|\mathrm{k}|=k} \mathcal{P}_{ \underline{\eta},\mathrm{k}}  ( \bx') a_{\mathrm{k}},\quad  a_{\mathrm{k}}= \partial_{\mathrm{k}} P(0). $$
\end{lemma}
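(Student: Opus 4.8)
The strategy is to compare $P_{\underline{\eta}}$ with the candidate $\sum_{|\mathrm{k}|=k}\mathcal{P}_{\underline{\eta},\mathrm{k}}(\bx')a_{\mathrm{k}}$ by checking that both are annihilated by $\sum_{i=0}^{p}e_i\partial_{x_i}+\underline{\eta}\partial_r$ on the slice $\mathbb{R}^{p+1}\oplus\underline{\eta}\mathbb R\cong\mathbb{R}^{p+2}$ and have the same restriction to $\mathbb{R}^{p+1}$, and then appealing to uniqueness of such a monogenic function.

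First I would record the shape of the boundary datum. Since $P_{\underline{\eta}}$ is homogeneous of degree $k$, setting $r=0$ produces a homogeneous polynomial of degree $k$ in the commuting real variables $x_0,\ldots,x_p$, so $P(\bx_p)=\sum_{|\mathrm{k}|=k}\bx_p^{\mathrm{k}}c_{\mathrm{k}}$ with coefficients $c_{\mathrm{k}}\in\mathbb{R}_{p+q}$ (the side of multiplication being immaterial because $\bx_p^{\mathrm{k}}$ is a scalar). Applying $\partial_{\mathrm{k}}$, which differentiates only in the $\bx_p$-directions, and evaluating at the origin annihilates every term with $|\mathrm{j}|=k$, $\mathrm{j}\neq\mathrm{k}$, and leaves $\partial_{\mathrm{k}}P(0)=\mathrm{k}!\,c_{\mathrm{k}}$; hence $a_{\mathrm{k}}=c_{\mathrm{k}}$. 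On the other hand, the identity $\mathrm{k}!\,\mathcal{P}_{\underline{\eta},\mathrm{k}}(\bx_p)=\bx_p^{\mathrm{k}}$ established earlier, i.e. $\mathcal{P}_{\underline{\eta},\mathrm{k}}(\bx_p,0)=\bx_p^{\mathrm{k}}/\mathrm{k}!$, shows that the restriction to $\mathbb{R}^{p+1}$ of $\sum_{|\mathrm{k}|=k}\mathcal{P}_{\underline{\eta},\mathrm{k}}(\bx')a_{\mathrm{k}}$ equals $\sum_{|\mathrm{k}|=k}(\bx_p^{\mathrm{k}}/\mathrm{k}!)\,\mathrm{k}!\,c_{\mathrm{k}}=P(\bx_p)$.

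Now set $Q:=P_{\underline{\eta}}-\sum_{|\mathrm{k}|=k}\mathcal{P}_{\underline{\eta},\mathrm{k}}(\bx')a_{\mathrm{k}}$. By the hypothesis on $P$ and by \eqref{Pkappa}, $Q$ is a polynomial on $\mathbb{R}^{p+2}$ with $(\sum_{i=0}^{p}e_i\partial_{x_i}+\underline{\eta}\partial_r)Q=0$, and by the previous paragraph $Q$ vanishes on $\mathbb{R}^{p+1}$. To conclude $Q\equiv 0$ I would apply the Splitting Lemma (Lemma \ref{Splitting-lemma}) to $Q$ on the $(p+2)$-dimensional slice: it writes $Q$ as a combination of $\mathbb{R}_{p+1}$-valued monogenic functions $Q_A$, each of which vanishes on the $(p+1)$-dimensional manifold $\mathbb{R}^{p+1}$; hence $Q_A\equiv 0$ by the identity theorem for monogenic functions (Theorem \ref{Identity-theorem-monogenic}), and therefore $Q\equiv 0$, which is the assertion. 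The only real care needed here — more bookkeeping than genuine obstacle — is to apply the Splitting Lemma and Theorem \ref{Identity-theorem-monogenic} on this $(p+2)$-dimensional slice, with $\underline{\eta}$ playing the role of the last Clifford generator $e_{p+1}$, rather than on the ambient space $\mathbb{R}^{p+q+1}$. Alternatively, the last step can be bypassed by noting that $P_{\underline{\eta}}$ and $\sum_{|\mathrm{k}|=k}\mathcal{P}_{\underline{\eta},\mathrm{k}}(\bx')a_{\mathrm{k}}$ are both the slice-wise Cauchy--Kovalevskaya extension $\exp(r\underline{\eta}D_{\bx_p})$ of the same real-analytic datum $P|_{\mathbb{R}^{p+1}}$, which forces them to coincide.
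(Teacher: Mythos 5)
Your proof is correct, but it follows a genuinely different route from the paper's. The paper argues constructively: combining the hypothesis $(\sum_{i=0}^{p}e_i\partial_{x_i}+\underline{\eta}\partial_{r})P=0$ with Euler's identity for homogeneous polynomials gives $kP=\sum_{i=0}^{p}z_i\partial_{x_i}P$, and iterating this $k$ times and regrouping the permutations produces the Fueter polynomials $\mathcal{P}_{\underline{\eta},\mathrm{k}}$ together with the coefficients $\frac{1}{\mathrm{k}!}\partial_{\mathrm{k}}P(0)$ directly. You instead run a uniqueness argument: the candidate $\sum_{|\mathrm{k}|=k}\mathcal{P}_{\underline{\eta},\mathrm{k}}(\bx')a_{\mathrm{k}}$ is annihilated by the slice operator thanks to \eqref{Pkappa}, agrees with $P$ on $\mathbb{R}^{p+1}$ because $\mathrm{k}!\,\mathcal{P}_{\underline{\eta},\mathrm{k}}(\bx_p,0)=\bx_p^{\mathrm{k}}$ and homogeneity forces $\partial_{\mathrm{k}}P(0)=\mathrm{k}!\,c_{\mathrm{k}}$ (the terms carrying a positive power of $r$ have $\bx_p$-degree strictly less than $k$ and so die under $\partial_{\mathrm{k}}$), and the difference splits into $\mathbb{R}_{p+1}$-valued monogenic functions on $\mathbb{R}^{p+2}$ vanishing on the $(p+1)$-dimensional hyperplane $\{r=0\}$, hence vanishes by Theorem \ref{Identity-theorem-monogenic}. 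This is precisely the mechanism the paper itself uses a few lines later to prove $CK[\bx_{p}^{\mathrm{k}}]=\mathrm{k}!\,\mathcal{P}_{\mathrm{k}}(\bx)$, and your CK-extension variant is a legitimate shortcut for the same reason (for a polynomial in the kernel of $D_{\bx_p}+\underline{\eta}\partial_r$ one has $\partial_r^jg=(\underline{\eta}D_{\bx_p})^jg$, so $g$ is determined by its restriction to $r=0$). The trade-off: the paper's computation is self-contained and actually derives the expansion, whereas your argument is shorter but must import the identity theorem for monogenic functions and requires the preliminary observation that the coefficients $a_{\mathrm{k}}$ are already readable from $P|_{\mathbb{R}^{p+1}}$ alone; both are sound, and your bookkeeping on applying the Splitting Lemma and Theorem \ref{Identity-theorem-monogenic} with $n=p+1$ on the slice (with $\underline{\eta}$ in the role of $e_{p+1}$) is exactly the care that is needed.
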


\begin{proof}
We fix $\underline{\eta}\in\mathbb S$ and consider a generic polynomial, homogeneous of degree $k$,
 satisfying
 $$
 (\sum_{i=0}^{p}e_i\partial_{x_i}+\underline{\eta}\partial_{r})P(\bx_p+\underline{\eta} r)=0,
 $$
 from which we deduce
 \begin{equation}\label{Poly}
 \partial_{r}P(\bx_p+\underline{\eta} r)=\sum_{i=0}^{p} \underline{\eta} e_i\partial_{x_i} P(\bx_p+\underline{\eta} r) .
 \end{equation}
 Since $P_{\underline{\eta}}$ is homogeneous of degree $k$, $P_{\underline{\eta}}$ satisfies also
 $$
 \sum_{i=0}^p x_i\partial_{x_i} P_{\underline{\eta}} + r \partial_r P_{\underline{\eta}}= k P_{\underline{\eta}},
 $$
 and substituting \eqref{Poly} in this last expression we get, recalling that $e_0=1$, that
 $$
 k P(\bx_p+\underline{\eta} r)=  \sum_{i=0}^p (x_i+\underline{\eta} e_i r) \partial_{x_i}P(\bx_p+\underline{\eta} r)= \sum_{i=0}^p z_i  \partial_{x_i}P(\bx_p+\underline{\eta} r).
 $$
  We now iterate the procedure for the derivatives $\partial_{x_i}P_{\underline{\eta}}$, $i=0,\,\ldots, p,$ which are homogeneous polynomials of degree $(k-1)$ and also also   in the kernel of the operator $(D_{\bx_p}+\underline{\eta}\partial r)$. After $k$ iterations we obtain:
  $$
  k! P(\bx_p+\underline{\eta} r)=  \sum_{i_1,\ldots, i_k=0}^p z_{i_1}\ldots z_{i_k}\frac{\partial^k}{\partial_{x_{i_1}}\ldots \partial_{x_{i_k}}}P(\bx_p+\underline{\eta} r).
  $$
  Since the order of derivation is not important, we can group all the derivatives of the form
  $\frac{\partial^{k} P }{\partial_{x_{0}}^{k_0}\partial_{x_{1}}^{k_1}\cdots\partial_{x_{p}}^{k_p} }$
  and we obtain
  \[
  \begin{split}
  P(\bx_p+\underline{\eta} r)&=  \frac{1}{k! }  \sum_{i_1,\ldots, i_k=0}^p z_{i_1}\ldots z_{i_k}\frac{\partial^k}{\partial_{x_{i_1}}\ldots \partial_{x_{i_k}}}P(\bx_p+\underline{\eta} r)\\
  &=\sum_{|\mathrm{k}|=k, \ \mathrm{k}=(k_0,k_1,\ldots,k_{p})} \mathcal{P}_{\mathrm{k}}(\bx_p+\underline{\eta} r) \frac{\partial^{k}   }{\partial_{x_{0}}^{k_0}\partial_{x_{1}}^{k_1}\cdots\partial_{x_{p}}^{k_p} }P(\bx_p+\underline{\eta} r)\\
  &=\sum_{|\mathrm{k}|=k} \mathcal{P}_{\mathrm{k}}(\bx_p+\underline{\eta} r) \partial_{\mathrm{k}} P(0),
  \end{split}
  \]
  where last equality follows form the fact that $P_{\underline{\eta}}$ has degree $k$ and we are differentiating it $k$ times.
  The assertion now follows.
\end{proof}

Let us denote by  $B(\by,\rho)=\{\bx \in \mathbb{R}^{p+q+1}: |\by-\bx|<\rho \}$  the ball centered in $\by\in \mathbb{R}^{p+q+1}$ with radius $\rho>0$.
\begin{lemma} \label{Taylor-lemma-left}
Let $f: B=B(0,\rho)\rightarrow \mathbb{R}_{p+q}$ be a generalized partial-slice monogenic function. For  any $\bx\in B$,  there exists   $\underline{\omega}\in \mathbb{S}$ such that $\bx=\bx_p+r \underline{\omega} \in B_{\underline{{\omega}} }$, and
$$f(\bx)=  \sum_{k=0}^{+\infty} \sum_{|\mathrm{k}|=k} \mathcal{P}_{ \underline{\omega},\mathrm{k}}  (\bx')\partial_{ \mathrm{k}}  f(0), \quad \bx'=(\bx_p,r ), $$
where the series converges uniformly on compact subsets of $B_{\underline{{\omega}} }$.
\end{lemma}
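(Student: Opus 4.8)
The plan is to reduce the statement, slice by slice, to the classical Taylor expansion of monogenic functions on a ball in $\mathbb{R}^{p+2}$, and then to rewrite that expansion in terms of the polynomials $\mathcal{P}_{\underline{\omega},\mathrm{k}}$ by means of Lemma \ref{lemmakth}.

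First I would fix $\bx=\bx_p+r\underline{\omega}\in B$ with $\underline{\omega}\in\mathbb{S}$ (when $\bx\in\mathbb{R}^{p+1}$ any $\underline{\omega}$ will do). Since $B=B(0,\rho)$ is centred at the origin, $B_{\underline{\omega}}=B\cap(\mathbb{R}^{p+1}\oplus\underline{\omega}\mathbb{R})$ is the open ball of radius $\rho$ centred at $0$ in $\mathbb{R}^{p+2}\cong\mathbb{R}^{p+1}\oplus\underline{\omega}\mathbb{R}$, and $\bx\in B_{\underline{\omega}}$. Completing $\underline{\omega}=I_{p+1}$ to a basis as in Lemma \ref{Splitting-lemma}, the Splitting Lemma gives
$$f_{\underline{\omega}}(\bx_p+r\underline{\omega})=\sum_{A\subset\{p+2,\ldots,p+q\}}F_A(\bx_p+r\underline{\omega})\,I_A,$$
where each $F_A\colon B_{\underline{\omega}}\to\mathbb{R}_{p+1}={\rm Alg}\{I_1,\ldots,I_p,\underline{\omega}\}$ lies in $\ker(D_{\bx_p}+\underline{\omega}\partial_r)$. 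Once $\underline{\omega}$ is fixed this operator is an ordinary generalized Cauchy--Riemann operator on the paravector space $\mathbb{R}^{p+2}$ (real variable $x_0$ and imaginary units $I_1,\ldots,I_p,\underline{\omega}$), so each $F_A$ is a classical left monogenic function on the ball $B_{\underline{\omega}}$.

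Next I would invoke the classical Taylor theorem for monogenic functions on a ball (see e.g. \cite{Gurlebeck}): each $F_A$ equals $\sum_{k\ge 0}P_{A,k}$, where $P_{A,k}$ is the homogeneous component of degree $k$, which is itself a homogeneous monogenic polynomial, and the series converges uniformly on compact subsets of $B_{\underline{\omega}}$. Applying Lemma \ref{lemmakth} to each $P_{A,k}$ (with $\underline{\eta}=\underline{\omega}$) gives
$$P_{A,k}(\bx')=\sum_{|\mathrm{k}|=k}\mathcal{P}_{\underline{\omega},\mathrm{k}}(\bx')\,\frac{\partial_{\mathrm{k}}P_{A,k}(0)}{\mathrm{k}!};$$
moreover $\partial_{\mathrm{k}}$ with $|\mathrm{k}|=k$ annihilates at the origin every homogeneous polynomial of degree $\ne k$, so $\partial_{\mathrm{k}}P_{A,k}(0)=\partial_{\mathrm{k}}F_A(0)$. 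Multiplying by $I_A$ on the right, which commutes with the scalar operators $\partial_{\mathrm{k}}$, and summing over the finitely many $A$, I obtain
$$f_{\underline{\omega}}(\bx)=\sum_{k=0}^{+\infty}\sum_{|\mathrm{k}|=k}\mathcal{P}_{\underline{\omega},\mathrm{k}}(\bx')\,\frac{1}{\mathrm{k}!}\sum_A\partial_{\mathrm{k}}F_A(0)\,I_A=\sum_{k=0}^{+\infty}\sum_{|\mathrm{k}|=k}\mathcal{P}_{\underline{\omega},\mathrm{k}}(\bx')\,\frac{\partial_{\mathrm{k}}f_{\underline{\omega}}(0)}{\mathrm{k}!},$$
with uniform convergence on compact subsets of $B_{\underline{\omega}}$. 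Finally $\partial_{\mathrm{k}}f_{\underline{\omega}}(0)=\partial_{\mathrm{k}}f(0)$, because $\partial_{\mathrm{k}}$ only differentiates in the $x_0,\ldots,x_p$ directions and at $r=0$ one has $\bx_p+\underline{\omega}\cdot 0=\bx_p$ regardless of $\underline{\omega}$; this is the asserted formula.

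I expect the main obstacle to be the bookkeeping in the middle step: one has to be sure that the monogenicity produced by the Splitting Lemma — membership in $\ker(D_{\bx_p}+\underline{\omega}\partial_r)$ with $I_1,\ldots,I_p,\underline{\omega}$ playing the role of imaginary units — really does put $F_A$ within reach of the classical Taylor theorem on $\mathbb{R}^{p+2}$, and that the ``ad hoc'' polynomials $\mathcal{P}_{\underline{\omega},\mathrm{k}}$ form an admissible basis of homogeneous monogenic polynomials into which that Taylor series can be re-expanded with the stated derivative coefficients. These are precisely the roles of Lemma \ref{Splitting-lemma} and Lemma \ref{lemmakth} (the latter relying on the identities $\partial_{x_j}\mathcal{P}_{\underline{\omega},\mathrm{k}}=k_j\mathcal{P}_{\underline{\omega},\mathrm{k}-\mathrm{\epsilon}_j}$ established in the preceding proposition); granting these, the remaining manipulations — reindexing, commuting $\partial_{\mathrm{k}}$ past the constants $I_A$, and propagating uniform convergence through a finite sum — are routine.
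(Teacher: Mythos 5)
Your proposal is correct and follows essentially the same route as the paper: Splitting Lemma to reduce to monogenic components $F_A$ on $B_{\underline{\omega}}$, expansion of each $F_A$ into homogeneous monogenic polynomials, re-expansion of each homogeneous piece via Lemma \ref{lemmakth}, and resummation over $A$. Your added remarks that $\partial_{\mathrm{k}}P_{A,k}(0)=\partial_{\mathrm{k}}F_A(0)$ and that $\partial_{\mathrm{k}}f_{\underline{\omega}}(0)=\partial_{\mathrm{k}}f(0)$ make explicit two small steps the paper leaves implicit.
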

\begin{proof}
Let us fix $\underline{\omega}=I_{p+1}$ and let us complete to a basis of $\mathbb{R}_{p+q}$ as in Lemma \ref{Splitting-lemma}. Then the restriction $f_{\underline{\omega}}$ can be written as
\begin{equation}\label{equalem} f_{\underline{\omega}}(\bx_p+r\underline{\omega})=\sum_{A}F_{A}(\bx_p+r\underline{\omega})I_{A}, \quad \bx_p+r\underline{\omega}\in B_{\underline{\omega}},
\end{equation}
where $I_{A}=I_{i_{1}}\cdots I_{i_{s}},  A=\{i_{1},\ldots, i_{s}\} \subseteq \{p+2,\ldots,p+q\}$.
All components $F_A$, being monogenic, are also real analytic. Hence write
the components $F_A$ in the form
$$F_A(\bx_p+r\underline{\omega})=\sum_{k=0}^{+\infty} F_{A,k}(\bx_p+r\underline{\omega}),$$
where the series converges uniformly on compact subsets in $B_{\underline{\omega}}$ and  $F_{A,k}$ are homogeneous polynomials of degree $k$  such that $(\sum_{i=0}^{p}e_i\partial_{x_i}+\underline{\omega}\partial_{r})F_{A,k}=0$.  Lemma \ref{lemmakth} implies that
$$F_{A,k}(\bx_p+r\underline{\omega})=
\sum_{|\mathrm{k}|=k}   \mathcal{P}_{\underline{\omega},\mathrm{k}}(\bx_p+r\underline{\omega}) \partial_{\mathrm{k}} F_{A,k}(0).$$
Substituting this expression into \eqref{equalem}, we obtain
\begin{eqnarray*}
f_{\underline{\omega}}(\bx_p+r\underline{\omega})
 &=&\sum_{A=\{i_{1},\ldots, i_{s}\} \subseteq \{p+2,\ldots,p+q\}}F_{A}(\bx_p+r\underline{\omega})I_{A}
 \\
&=&\sum_{A}\sum_{k=0}^{+\infty} \sum_{|\mathrm{k}|=k}\mathcal{P}_{\underline{\omega},\mathrm{k}}(\bx_p+r\underline{\omega}) \partial_{ \mathrm{k}} F_A(0) I_{A}
\\
&=&\sum_{k=0}^{+\infty} \sum_{|\mathrm{k}|=k}\mathcal{P}_{\underline{\omega},\mathrm{k}}(\bx_p+r\underline{\omega}) \partial_{\mathrm{k}}f(0).
\end{eqnarray*}
The statement follows.
\end{proof}

 \begin{remark}\label{Taylor-lemma-right}
 Using the same method as in Lemma \ref{Taylor-lemma-left}, one can prove that, for  right generalized partial-slice monogenic functions $f: B=B(0,\rho)\rightarrow \mathbb{R}_{p+q}$,
$$f(\bx)=  \sum_{k=0}^{+\infty} \sum_{|\mathrm{k}|=k}  \partial_{ \mathrm{k}}  f(0)  \mathcal{P}^{R}_{ \underline{\omega},\mathrm{k}}  (\bx'), \quad \bx \in B_{\underline{{\omega}} }.$$
\end{remark}

\begin{theorem}[Taylor series]\label{Taylor-theorem}
Let $f: B=B(0,\rho)\rightarrow \mathbb{R}_{p+q}$ be a generalized partial-slice monogenic function. For  any $\bx\in B$, we have
$$f(\bx)=  \sum_{k=0}^{+\infty} \sum_{|\mathrm{k}|=k} \mathcal{P}_{\mathrm{k}}(\bx)  \partial_{\mathrm{k}} f (0),  $$
where the series converges uniformly on compact subsets of $B$.
\end{theorem}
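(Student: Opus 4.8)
The plan is to deduce the theorem from Lemma \ref{Taylor-lemma-left}, which already establishes the expansion slice by slice, and then to upgrade the convergence from ``uniform on compact subsets of a single slice $B_{\underline{\omega}}$'' to ``uniform on compact subsets of the full ball $B\subset\mathbb{R}^{p+q+1}$''. First I would record two preliminary remarks: $B=B(0,\rho)$ is a $p$-symmetric slice domain, so that Lemma \ref{Taylor-lemma-left} and Theorem \ref{Representation-Formula-SM} both apply to it; and the coefficients $\partial_{\mathrm{k}}f(0)$ depend only on the restriction of $f$ to $\mathbb{R}^{p+1}$, hence coincide for every choice of slice. Then the pointwise identity is immediate: for $\bx\in B$ write $\bx=\bx_p+r\underline{\omega}$ and set $\bx'=(\bx_p,r)$; since by its very construction the restriction of $\mathcal{P}_{\mathrm{k}}$ to $\mathrm{H}_{\underline{\omega}}$ is $\mathcal{P}_{\underline{\omega},\mathrm{k}}(\bx')$, Lemma \ref{Taylor-lemma-left} yields
\[
f(\bx)=\sum_{k=0}^{+\infty}\sum_{|\mathrm{k}|=k}\mathcal{P}_{\underline{\omega},\mathrm{k}}(\bx')\,\frac{\partial_{\mathrm{k}}f(0)}{\mathrm{k}!}
=\sum_{k=0}^{+\infty}\sum_{|\mathrm{k}|=k}\mathcal{P}_{\mathrm{k}}(\bx)\,\frac{\partial_{\mathrm{k}}f(0)}{\mathrm{k}!}
\]
at every $\bx\in B$. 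So the only content beyond Lemma \ref{Taylor-lemma-left} is the uniformity on compact subsets of $B$, as opposed to compact subsets of a single slice.

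For the uniformity I would fix a compact set $K\subset B$, choose $\rho'$ with $\sup_{\bx\in K}|\bx|<\rho'<\rho$, and fix once and for all some $\underline{\eta}\in\mathbb{S}$. Write $S_N(\bx)$ for the $N$-th partial sum of the series in the statement, and $S^{\underline{\eta}}_N$ for the $N$-th partial sum of the series $\sum_{k}\sum_{|\mathrm{k}|=k}\mathcal{P}_{\underline{\eta},\mathrm{k}}(\cdot)\,\partial_{\mathrm{k}}f(0)/\mathrm{k}!$ on the slice $B_{\underline{\eta}}$; by Lemma \ref{Taylor-lemma-left} (applied with $\underline{\omega}=\underline{\eta}$), $S^{\underline{\eta}}_N\to f_{\underline{\eta}}$ uniformly on the compact set $C:=\overline{B(0,\rho')}\cap(\mathbb{R}^{p+1}\oplus\underline{\eta}\mathbb{R})\subset B_{\underline{\eta}}$. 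Now take $\bx=\bx_p+r\underline{\omega}\in K$; since $|\bx_p|^2+r^2=|\bx|^2<(\rho')^2$, both points $\bx_p\pm r\underline{\eta}$ lie in $C$, and summing the Representation Formula for the Fueter polynomials \eqref{ReprPk} termwise (recall that $\bx'_{\diamond}=(\bx_p,-r)$ corresponds to the point $\bx_p-r\underline{\eta}$ of $B_{\underline{\eta}}$) gives
\[
S_N(\bx)=\frac{1}{2}(1-\underline{\omega}\underline{\eta})\,S^{\underline{\eta}}_N(\bx_p+r\underline{\eta})+\frac{1}{2}(1+\underline{\omega}\underline{\eta})\,S^{\underline{\eta}}_N(\bx_p-r\underline{\eta}).
\]
Since $|\underline{\omega}\underline{\eta}|=1$ (a product of two unit $1$-vectors of $\mathbb{R}^{q}$), the Clifford factors $\frac{1}{2}(1\mp\underline{\omega}\underline{\eta})$ have norm at most $1$, while $S^{\underline{\eta}}_N(\bx_p\pm r\underline{\eta})$ converge uniformly over $\bx\in K$ to $f(\bx_p\pm r\underline{\eta})$; hence $S_N$ converges uniformly on $K$, with limit $\frac{1}{2}(1-\underline{\omega}\underline{\eta})f(\bx_p+r\underline{\eta})+\frac{1}{2}(1+\underline{\omega}\underline{\eta})f(\bx_p-r\underline{\eta})$, which by \eqref{Representation-Formula-eq} in Theorem \ref{Representation-Formula-SM} equals $f(\bx)$. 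Thus the Taylor series converges to $f$ uniformly on $K$, and the theorem follows.

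The only real obstacle is precisely this passage from slice-wise to genuinely $(p+q+1)$-dimensional uniform convergence; the Representation Formula \eqref{ReprPk}, together with \eqref{Representation-Formula-eq}, is tailor-made for it, since it transfers all the analytic control back to a single fixed slice. A more computational alternative would be to combine the Splitting Lemma \ref{Splitting-lemma} on the slice $B_{\underline{\eta}}$ with the standard estimates on the homogeneous monogenic components underlying Lemma \ref{Taylor-lemma-left}, and an elementary bound on $|\mathcal{P}_{\mathrm{k}}(\bx)|$ coming from the definition of the Fueter polynomials and the submultiplicativity of the Clifford norm, so as to obtain normal convergence directly on $\overline{B(0,\rho')}$; I would nevertheless present the first route, as it is shorter and stays within the machinery already developed in the paper.
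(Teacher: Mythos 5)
Your proof is correct and follows essentially the same route as the paper: it combines Lemma \ref{Taylor-lemma-left} on a fixed slice with the Representation Formula, both for $f$ (via \eqref{Representation-Formula-eq}) and for the Fueter polynomials (via \eqref{ReprPk}). Your treatment of the uniform convergence on compact subsets of $B$ --- transferring a compact $K\subset B$ to the compact set $\overline{B(0,\rho')}\cap(\mathbb{R}^{p+1}\oplus\underline{\eta}\mathbb{R})$ of a single slice and bounding the factors $\tfrac12(1\mp\underline{\omega}\underline{\eta})$ --- is in fact more explicit than the paper's one-line appeal to the arbitrariness of $\underline{\eta}$.
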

\begin{proof}
Let $\bx\in B$ and   $\underline{\eta} \in \mathbb{S}$, Write  $\bx=\bx_p+r  \underline{\omega}$ for some  $\bx_{p}\in\R^{p+1}, r\geq0,$ and $ \underline{\omega}\in \mathbb{S}$ and set $\bx'=(\bx_p,r ),\bx'_{\diamond}=(\bx_p,-r)$. Since the ball $B$ is a p-symmetric slice domain, we can use  Theorem \ref{Representation-Formula-SM} and combining with the previous Lemma \ref{Taylor-lemma-left}, it follows that
 \begin{eqnarray*}
 f(\bx)
&=& \frac{ 1}{2} (1- \underline{\omega}\underline{\eta})    f(\bx_p+r \underline{\eta})+
 \frac{ 1}{2} (1+ \underline{\omega}\underline{\eta} )  f(\bx_p-r \underline{\eta} )
 \\
 &=& \frac{ 1}{2} (1- \underline{\omega} \underline{\eta}) \sum_{k=0}^{+\infty} \sum_{|\mathrm{k}|=k} \mathcal{P}_{\underline{\eta}, \mathrm{k}}(\bx')  \partial_{\mathrm{k}} f (0)
 \\
  & &+\frac{ 1}{2} (1+ \underline{\omega}\underline{\eta} ) \sum_{k=0}^{+\infty} \sum_{|\mathrm{k}|=k} \mathcal{P}_{\underline{\eta},\mathrm{k}}(\bx'_{\diamond})  \partial_{\mathrm{k}} f (0)
 \\
 &=& \frac{ 1}{2} \sum_{k=0}^{+\infty} \sum_{|\mathrm{k}|=k} \big((1- \underline{\omega}\underline{\eta}) \mathcal{P}_{\underline{\eta},\mathrm{k}}(\bx')+(1+ \underline{\omega}\underline{\eta}) \mathcal{P}_{\underline{\eta},\mathrm{k}}(\bx'_{\diamond})\big)  \partial_{\mathrm{k}} f(0)
  \\
   &=&\sum_{k=0}^{+\infty} \sum_{|\mathrm{k}|=k} \mathcal{P}_{\mathrm{k}}(\bx_p+r  \underline{\omega} )  \partial_{\mathrm{k}} f(0) ,
   \end{eqnarray*}
 where, by the arbitrariness of $\eta\in\mathbb S$, the series converges uniformly on compact subsets of $B$.
 \end{proof}

\begin{remark}{\rm
In Theorem \ref{Taylor-theorem},  the Taylor series expansion for generalized partial-slice monogenic functions is centered at $\by=0$, but it could be centered at $\by \in B\cap \mathbb{R}^{p+1}$ by Proposition \ref{preserving-slice-monogenic}.  However,  it is an open problem to establish if there is a formula centered at  $\by \in  \mathbb{R}^{p+q+1}$ which is valid in open sets (in the Euclidean sense). Already in the case of quaternionic slice regular functions, the Taylor series centered at an arbitrary point is based on a different type of expansion, see  \cite[Theorem 4.1]{Stoppato}.}
\end{remark}
We now prove the forms of the Cauchy formulas, the first one with limited validity since it holds on ``slices" and the second one which is valid on open sets and is based on a kernel which is generalized partial-slice monogenic. To this end, we recall the function in Example \ref{Cauchy-kernel-example}:
$$E(\bx) =\frac{1}{\sigma_{p+1}}\frac{\overline{\bx}}{|\bx|^{p+2}} \in \mathcal {GSM}(\mathbb{R}^{p+q+1}\setminus \{0\}).$$

\begin{theorem}[Cauchy  formula, I]\label{Cauchy-formula-monogenic}
Let $\Omega\subseteq \mathbb{R}^{p+q+1}$ be a  p-symmetric slice domain and $f:\Omega\rightarrow \mathbb{R}_{p+q}$ be a  generalized partial-slice monogenic function. If  $U$ is a domain in  $\mathbb{R}^{p+q+1}$ such that  $U_{\underline{{\omega}} }\subset \Omega_{\underline{{\omega}}} $ is a   bounded domain in $\mathbb{R}^{p+2}$ with    smooth boundary $\partial U_{\underline{{\omega}}}\subset\Omega_{\underline{\omega}}$ for some $\underline{{\omega}}\in \mathbb{S}$,  then
$$f(\bx)=\int_{\partial U_{\underline{{\omega}}}} E(\by- \bx) n(\by)f(\by) dS(\by), \quad  \bx \in U_{\underline{{\omega}}},   $$
where  $n(\by)=\sum_{i=0}^{p}n_i(\by) e_i+n_{p+1}(\by)\underline{{\omega}}$ is the unit exterior normal to $\partial U_{\underline{{\omega}}}$ at $\by$ and $dS$  stands  for  the   classical   Lebesgue surface  element in $\mathbb{R}^{p+2}$.
\end{theorem}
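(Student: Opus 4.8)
The plan is to restrict $f$ to the single slice $\Omega_{\underline{\omega}}$, to decompose the restriction $f_{\underline{\omega}}$ into classical monogenic components by means of the Splitting Lemma, to apply the classical Cauchy integral formula of monogenic function theory in $\mathbb{R}^{p+2}$ to each component, and finally to reassemble. The theorem is thus essentially ``imported'' from the monogenic case, and the work lies in making the reduction precise.

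First I would fix the chosen $\underline{\omega}\in\mathbb{S}$, rename it $I_{p+1}$, keep $I_r=e_r$ for $r=1,\dots,p$, and complete the orthonormal system $e_1,\dots,e_p,\underline{\omega}$ to an orthonormal basis $I_1,\dots,I_{p+q}$ of $\R^{p+q}$ by choosing $I_{p+2},\dots,I_{p+q}$ in the orthogonal complement of $\underline{\omega}$ inside $\R^q$, as in Lemma \ref{Splitting-lemma}. With this normalization $\mathbb{R}_{p+1}=\mathrm{Alg}\{e_1,\dots,e_p,\underline{\omega}\}$ contains the exterior unit normal $n(\by)=\sum_{i=0}^p n_i(\by)e_i+n_{p+1}(\by)\underline{\omega}$ and every value $E(\by-\bx)$ with $\by,\bx\in\Omega_{\underline{\omega}}$, since $\by-\bx$ is a paravector in $\R\oplus\mathrm{span}\{e_1,\dots,e_p,\underline{\omega}\}$. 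Identifying $\Omega_{\underline{\omega}}$ with a domain of $\mathbb{R}^{p+2}$ via $\bxp+r\underline{\omega}\leftrightarrow(\bxp,r)$, with $\underline{\omega}$ taking the place of the last coordinate unit vector, the operator $D_{\underline{\omega}}=D_{\bxp}+\underline{\omega}\partial_r$ becomes precisely the generalized Cauchy--Riemann operator of $\mathbb{R}^{p+2}$, so that generalized partial-slice monogenicity on $\Omega$ restricts, slice by slice, to classical monogenicity on $\Omega_{\underline{\omega}}$; moreover the restriction of the kernel $E$ from Example \ref{Cauchy-kernel-example} to $\mathbb{R}^{p+2}$ equals $\sigma_{p+1}^{-1}\,\overline{\bx}\,|\bx|^{-(p+2)}$, which is the Cauchy kernel for left monogenic functions in $\mathbb{R}^{p+2}$ because $\sigma_{p+1}$ is the surface area of the unit sphere of $\mathbb{R}^{p+2}$.

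Next, invoking Lemma \ref{Splitting-lemma}, I would write $f_{\underline{\omega}}=\sum_A F_A I_A$ on $\Omega_{\underline{\omega}}$ with $F_A:\Omega_{\underline{\omega}}\to\mathbb{R}_{p+1}$ left monogenic, $A\subset\{p+2,\dots,p+q\}$. Since $U_{\underline{\omega}}$ is, by hypothesis, a bounded domain of $\mathbb{R}^{p+2}$ with smooth boundary $\partial U_{\underline{\omega}}\subset\Omega_{\underline{\omega}}$ (so $\overline{U_{\underline{\omega}}}\subset\Omega_{\underline{\omega}}$), the classical Cauchy integral formula for $\mathbb{R}_{p+1}$-valued left monogenic functions (see, e.g., \cite{Brackx,Gurlebeck}) applies to each $F_A$ and gives, for $\bx\in U_{\underline{\omega}}$,
\[
F_A(\bx)=\int_{\partial U_{\underline{\omega}}}E(\by-\bx)\,n(\by)\,F_A(\by)\,dS(\by).
\]
Multiplying on the right by the constant $I_A$ (which may be pulled out of the integral) and summing over $A$ turns the right-hand side into $\int_{\partial U_{\underline{\omega}}}E(\by-\bx)\,n(\by)\,f_{\underline{\omega}}(\by)\,dS(\by)$ and the left-hand side into $\sum_A F_A(\bx)I_A=f_{\underline{\omega}}(\bx)=f(\bx)$; since $f=f_{\underline{\omega}}$ on $\Omega_{\underline{\omega}}$ this is the asserted identity.

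The main obstacle is not analytic but organizational: one must check that under $\Omega_{\underline{\omega}}\cong\mathbb{R}^{p+2}$ the partial-slice operator $D_{\underline{\omega}}$ is genuinely the standard generalized Cauchy--Riemann operator; that the Splitting Lemma is applied with the normalization $I_r=e_r$ ($r\le p$) so that $n(\by)$ and $E(\by-\bx)$ remain $\mathbb{R}_{p+1}$-valued and the product $E(\by-\bx)n(\by)F_A(\by)$ makes sense in $\mathbb{R}_{p+1}$; and that the classical Cauchy formula may be invoked for functions with values in the full real vector space $\mathbb{R}_{p+1}$, not merely in its paravector part — this last point is routine, as that formula is obtained componentwise from Stokes' theorem together with the fundamental-solution property of $E$. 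Finally I would note that the $p$-symmetric slice-domain hypotheses enter only to guarantee that $\Omega_{\underline{\omega}}$ is a bona fide domain of $\mathbb{R}^{p+2}$ containing $\overline{U_{\underline{\omega}}}$; the formula itself is entirely ``one-slice''.
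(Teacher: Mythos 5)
Your proposal is correct and follows essentially the same route as the paper's own proof: restrict to the slice $\Omega_{\underline{\omega}}$, split $f_{\underline{\omega}}=\sum_A F_A I_A$ into monogenic components via Lemma \ref{Splitting-lemma}, apply the classical Cauchy integral formula to each $F_A$, and reassemble by right-multiplying by $I_A$ and summing. Your additional care about the normalization $I_r=e_r$ and the $\mathbb{R}_{p+1}$-valuedness of $E(\by-\bx)$ and $n(\by)$ is a welcome elaboration of points the paper leaves implicit.
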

\begin{proof}
Under the assumptions, the Splitting Lemma \ref{Splitting-lemma} guarantees that there exist $2^{q-1}$ monogenic functions $F_{A}:\Omega_{\underline{\omega}}\subseteq\mathbb{R}^{p+2} \rightarrow \mathbb{R}_{p+1}$  such that
\begin{equation*}
f(\bx)=\sum_{A=\{i_{1},\ldots, i_{s}\} \subseteq \{p+2,\ldots,p+q\}}F_{A}(\bx)I_{A}, \quad \bx=\bx_p+r\underline{\omega}\in \Omega_{\underline{\omega}}.
\end{equation*}
The Cauchy  integral formula (see e.g. \cite[Corollary 9.6]{Brackx}) for monogenic functions applied to each $F_{A}$  gives that
$$F_{A}(\bx)=\int_{\partial U_{\underline{{\omega}}}} E(\by- \bx) n(\by)F_{A}(\by) dS(\by), \quad  \bx \in U_{\underline{{\omega}}}\subset\Omega_{\underline{\omega}}, $$
where  $n(\by)$ and $dS$ are as in the statement.

Hence, \begin{eqnarray*}
 f(\bx)
&=&\sum_{A=\{i_{1},\ldots, i_{s}\} \subseteq \{p+2,\ldots,p+q\}}\Big(\int_{\partial U_{\underline{{\omega}}}} E(\by- \bx) n(\by)F_{A}(\by) dS(\by) \Big)I_{A}
 \\
 &=&\int_{\partial U_{\underline{{\omega}}}} E(\by- \bx) n(\by) \sum_{A} F_{A}(\by) I_{A} dS(\by)
 \\
 &=&\int_{\partial U_{\underline{{\omega}}}} E(\by- \bx) n(\by)  f(\by)  dS(\by),
\end{eqnarray*}
and the proof is complete.
\end{proof}

To prove our next results, we need more notation.

Let $\underline{\eta} \in \mathbb{S}$ and define the  functions $\mathcal{Q}_{\underline{\eta},\mathrm{k}}: \mathbb{R}^{p+2}\setminus \{0\}\rightarrow \mathbb{R}_{p+q}$, homogeneous of degree $-(k+p+1)$, as
$$\mathcal{Q}_{\underline{\eta},\mathrm{k}}(\bx')=(-1)^{k} \partial_{\mathrm{k}} E(\bx_p+r  \underline{\eta}).$$
From Example  \ref{Cauchy-kernel-example} and Proposition  \ref{preserving-slice-monogenic}, we can define the left (and right)  generalized partial-slice monogenic functions $\mathcal{Q}_{ \mathrm{k}}: \mathbb{R}^{p+q+1}\setminus \{0\}\rightarrow \mathbb{R}_{p+q}$  by letting $\eta$ to vary in $\mathbb S$, or equivalently, by setting
$$\mathcal{Q}_{ \mathrm{k}}  (\bx)=(-1)^{k}  \partial_{\mathrm{k}} E(\bx)=\frac{1}{2}(1- \underline{\omega}\underline{\eta}) \mathcal{Q}_{\underline{\eta},\mathrm{k}}(\bx')+\frac{1}{2}(1+ \underline{\omega}\underline{\eta}) \mathcal{Q}_{\underline{\eta},\mathrm{k}}(\bx'_{\diamond}), \quad \bx=\bx_p+r  \underline{\omega}.$$

It is important to note that, by the noncommutativity, for a generic $\by \in \mathbb{R}^{p+q+1}\setminus  \mathbb{R}^{p+1}$, we have that
$$E_{\by}(\cdot) :=E(\cdot, \by)=E(\by- \cdot)  \notin \mathcal {GSM}(\mathbb{R}^{p+q+1}\setminus \{\by\}).$$
However, for any fixed $\by \in \mathrm{H}_{\underline{\eta}}$ with $\underline{\eta}\in \mathbb{S}$,  the restriction of the function $E(\cdot, \by)$ to $\mathrm{H}_{\underline{\eta}}  \setminus \{\by\}$ is both left and right in the kernel of the operator $(D_{\bx_p}+{\underline{\eta}}\partial r)$.  Likewise, for any fixed $\bx \in \mathrm{H}_{\underline{\eta}}$ with $\underline{\eta}\in \mathbb{S}$, the restriction of the function $E(\bx, \cdot)$ to $\mathrm{H}_{\underline{\eta}}  \setminus \{\bx\}$ is both left and right in the kernel of the operator $(D_{\bx_p}+{\underline{\eta}}\partial r)$. Thus, using Theorem \ref{extthm},  we introduce a function that uniquely extends $E_{\by}(\cdot) $ and is generalized partial-slice monogenic as follows.
\begin{definition} Given $\by \in \mathbb{R}^{p+q+1}$,  we call the function $\mathcal{E}_{\by}(\cdot)$ \textit{left generalized partial-slice   Cauchy kernel} defined by
\begin{equation*}\label{slice-Cauchy-kernel}
\mathcal{E}_{\by}(\bx)=\frac{1}{2}(1-\underline{\omega}\underline{\eta})E_{\by}(\pi_{\by}(\bx))+
\frac{1}{2}(1+\underline{\omega}\underline{\eta})E_{\by}( \pi_{\by}(\bx)_{\diamond}),
\end{equation*}
where $\bx=\bx_p+r  \underline{\omega},$ $\by=\by_p+\widetilde{r}  \underline{\eta},
\pi_{\by}(\bx)=\bx_p+r \underline{\eta} $  and $\pi_{\by}(\bx)_{\diamond}=\bx_p-r\underline{\eta}$.

Likewise, we can define the \textit{right generalized partial-slice   Cauchy kernel}
 \begin{equation*} \mathcal{E}_{\by}^{R}(\bx)=\frac{1}{2}E_{\by}(\pi_{\by}(\bx))(1-\underline{\eta}\underline{\omega})+
\frac{1}{2}E_{\by}( \pi_{\by}(\bx)_{\diamond})(1+\underline{\eta}\underline{\omega}).
\end{equation*}
\end{definition}

\begin{remark}\label{Cauchy-Ker-expansion}
{\rm Recall that $[\by]=\{\by_p+r\underline{\omega}, \ \underline{\omega}\in\mathbb S\}$ and observe that $\mathcal{E}_{\by}(\cdot)$  is left generalized partial-slice monogenic in $\mathbb{R}^{p+q+1}\setminus [\by]$. If we fix $\bx$ and treat the parameter $\by$ as a variable, then $\mathcal{E}_{\cdot}(\bx)$ is right  generalized partial-slice monogenic in $\mathbb{R}^{p+q+1}\setminus [\bx]$. For $|\bx|<|\by|$, it admits the series expansion by Theorem \ref{Taylor-theorem}
\begin{equation*}
 \mathcal{E}_{\by}(\bx)=  \sum_{k=0}^{+\infty} \sum_{|\mathrm{k}|=k} \mathcal{P}_{\mathrm{k}}(\bx)\mathcal{Q}_{\mathrm{k}}(\by).
\end{equation*}
Furthermore, by Lemma \ref{Taylor-lemma-left} and Remark  \ref{Taylor-lemma-right}, for $|\bx|<|\by|$ with $\bx, \by  \in \mathrm{H}_{\underline{\eta}}$ for some $\underline{\eta}\in \mathbb{S}$,   we have
\begin{equation*}
 \mathcal{E}_{\by}(\bx)=E_{\by}(\bx)=\sum_{k=0}^{+\infty} \sum_{|\mathrm{k}|=k} \mathcal{P}_{\mathrm{k}}(\bx)\mathcal{Q}_{\mathrm{k}}(\by)=\sum_{k=0}^{+\infty} \sum_{|\mathrm{k}|=k}\mathcal{Q}_{\mathrm{k}}(\by) \mathcal{P}^{R}_{\mathrm{k}}(\bx).
\end{equation*}}
\end{remark}
A general Cauchy  formula now follows:
\begin{theorem}[Cauchy  formula, II]\label{Cauchy-formula-slice}
Let $\Omega\subseteq \mathbb{R}^{p+q+1}$ be a  p-symmetric slice domain and $f:\Omega\rightarrow \mathbb{R}_{p+q}$ be a  generalized partial-slice monogenic function. Given any $\underline{\eta} \in \mathbb{S}$,  let $U_{\underline{\eta} }\subset \Omega_{\underline{\eta}} $ be a   bounded domain in $\mathbb{R}^{p+2}$ with smooth boundary $\partial U_{\underline{\eta}}\subset\Omega_{\underline{\eta}}$. Then
$$f(\bx)=\int_{\partial U_{\underline{\eta}}} \mathcal{E}_{\by}(\bx) n(\by)f(\by) dS(\by), \quad  \bx \in U,   $$
where  the integral does not depend on the choice of $\underline{\eta}$, $n(\by)=\sum_{i=0}^{p}n_i(\by) e_i+n_{p+1}(\by)\underline{\eta}$ is the unit exterior normal to $\partial U_{\underline{\eta}}$ at $\by$ and $dS$  stands  for  the   classical   Lebesgue surface  element in $\mathbb{R}^{p+2}$.
\end{theorem}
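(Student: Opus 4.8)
The plan is to show that the right–hand side, viewed as a function of $\bx$, is generalized partial-slice monogenic on $U$ and that it coincides with $f$ on the slice $U_{\underline{\eta}}$, and then to conclude by the Identity Theorem. Here $U$ denotes the p-symmetric slice domain associated with $U_{\underline{\eta}}$, so that $U\cap\Omega_{\underline{\eta}}=U_{\underline{\eta}}$, $\overline{U_{\underline{\eta}}}\subset\Omega_{\underline{\eta}}$, and no sphere $[\by]$ with $\by\in\partial U_{\underline{\eta}}$ meets $U$. Set $g(\bx):=\int_{\partial U_{\underline{\eta}}}\mathcal{E}_{\by}(\bx)\,n(\by)\,f(\by)\,dS(\by)$ for $\bx\in U$. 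By Remark \ref{Cauchy-Ker-expansion}, for each fixed $\by$ the kernel $\mathcal{E}_{\by}(\cdot)$ is left generalized partial-slice monogenic on $\mathbb{R}^{p+q+1}\setminus[\by]$; since $\partial U_{\underline{\eta}}$ is compact and $[\by]\cap U=\emptyset$ for $\by\in\partial U_{\underline{\eta}}$, the integrand together with its first-order $\bx$-derivatives is bounded uniformly on compact subsets of $U$, so one may differentiate under the integral sign. On each slice $U_{\underline{\omega}}$ this gives $D_{\underline{\omega}}g_{\underline{\omega}}=\int_{\partial U_{\underline{\eta}}}\big(D_{\underline{\omega}}(\mathcal{E}_{\by})_{\underline{\omega}}\big)\,n(\by)\,f(\by)\,dS(\by)=0$, hence $g\in\mathcal{GSM}(U)$.

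Next I would identify $g$ with $f$ on the slice $U_{\underline{\eta}}$. Let $\bx=\bx_p+t\underline{\eta}\in U_{\underline{\eta}}$ with $t\in\mathbb{R}$, and write $\bx=\bx_p+|t|\underline{\omega}$ with $\underline{\omega}=\pm\underline{\eta}$. Then exactly one of the two coefficients $\frac12(1-\underline{\omega}\underline{\eta})$, $\frac12(1+\underline{\omega}\underline{\eta})$ in \eqref{slice-Cauchy-kernel} equals $1$ and the other equals $0$, and in either case $\mathcal{E}_{\by}(\bx)=E_{\by}(\bx)=E(\by-\bx)$. Therefore $g(\bx)=\int_{\partial U_{\underline{\eta}}}E(\by-\bx)\,n(\by)\,f(\by)\,dS(\by)$, which equals $f(\bx)$ by Theorem \ref{Cauchy-formula-monogenic} applied on the slice $\Omega_{\underline{\eta}}$ (with $U_{\underline{\eta}}$ playing the role of $U_{\underline{\omega}}$ there).

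To conclude, note that $U_{\underline{\eta}}$ is a $(p+2)$-dimensional domain in $\mathbb{R}^{p+2}$, hence it contains a $(p+1)$-dimensional smooth manifold; on it $f$ and $g$ agree by the previous step, and both are generalized partial-slice monogenic on the slice domain $U$. The Identity Theorem (Theorem \ref{Identity-theorem}) then forces $g\equiv f$ on $U$, which is precisely the claimed formula. Since this argument applies verbatim to every $\underline{\eta}\in\mathbb{S}$, the value of the integral does not depend on the choice of $\underline{\eta}$.

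The main obstacle is the first step: one must be certain that $\mathcal{E}_{\by}(\cdot)$ is genuinely generalized partial-slice monogenic in a full open neighbourhood of every $\bx\in U$, which combines Remark \ref{Cauchy-Ker-expansion} with the p-symmetry of $U$ — the latter guarantees that the ``singular spheres'' $[\by]$, $\by\in\partial U_{\underline{\eta}}$, stay away from $U$ — and one must justify the interchange of $D_{\underline{\omega}}$ with the integral, which rests on the local uniform bounds mentioned above. An alternative to the Identity-Theorem step, valid when $U$ is a ball, is to substitute the expansion $\mathcal{E}_{\by}(\bx)=\sum_{k=0}^{+\infty}\sum_{|\mathrm{k}|=k}\mathcal{P}_{\mathrm{k}}(\bx)\mathcal{Q}_{\mathrm{k}}(\by)$ from Remark \ref{Cauchy-Ker-expansion} and integrate termwise, thereby recovering the Taylor coefficients of $f$ via Theorem \ref{Taylor-theorem}; but the route through the Identity Theorem is cleaner in full generality.
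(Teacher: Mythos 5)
Your proof is correct and follows essentially the same route as the paper's: establish the formula on the slice $U_{\underline{\eta}}$ via the Cauchy formula I (Theorem \ref{Cauchy-formula-monogenic}) and then pass to all of $U$ by uniqueness of the generalized partial-slice monogenic extension, i.e.\ the Identity Theorem. The only cosmetic difference is that you verify directly, by differentiating under the integral sign, that the right-hand side is generalized partial-slice monogenic in $\bx$, whereas the paper recognizes it as ${\rm ext}$ applied to the slice integral and invokes the Extension Theorem \ref{extthm}.
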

\begin{proof}
From Theorem \ref{Cauchy-formula-monogenic} we can write a Cauchy formula which is valid on $U_{\underline{\eta}}$.
The result now follows by applying the extension theorem and the independence of $\underline{\eta}$ is assured by the identity theorem.
\end{proof}

In view of Remark \ref{Cauchy-Ker-expansion}, the Cauchy  formula II can be used to establish a Laurent expansion:
\begin{theorem}[Laurent series]\label{Laurent}
For $0\leq \rho_{1}< \rho<\rho_{2}$ and $\underline{\eta}\in \mathbb{S}$,   let  $\mathsf{A}=\{\bx \in \mathbb{R}^{p+q+1}: \rho_{1}< | \bx|<\rho_{2}\}$ and $S=\{\bx \in \mathbb{R}^{p+q+1}:  | \bx|=\rho\}$. If $f:\ \mathsf{A}\rightarrow \mathbb{R}_{p+q}$ is a  generalized partial-slice monogenic function, then
$$f(\bx)=  \sum_{k=0}^{+\infty} \sum_{|\mathrm{k}|=k} \mathcal{P}_{\mathrm{k}}(\bx) a_{\mathrm{k}}
+\sum_{k=0}^{+\infty} \sum_{|\mathrm{k}|=k} \mathcal{Q}_{\mathrm{k}}(\bx) b_{\mathrm{k}},$$
where both the series converge uniformly on each compact subset of $\mathsf{A}$ and
$$a_{\mathrm{k}}=   \int_{\partial S_{\underline{\eta}}} \mathcal{Q}_{ \underline{\eta},\mathrm{k}}(\by) n(\by)f(\by) dS(\by), $$
$$b_{\mathrm{k}}= \int_{\partial S_{\underline{\eta}}} \mathcal{P}^{R}_{ \underline{\eta},\mathrm{k}}(\by) n(\by)f(\by) dS(\by), $$
  where  $n(\by)$ and $dS(\by)$  are given as in Theorem \ref{Cauchy-formula-slice}.
\end{theorem}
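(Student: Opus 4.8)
The plan is to mimic the classical derivation of the Laurent expansion in Clifford analysis, but carried out ``slice-wise'' and then glued together by the Representation Formula. Fix an arbitrary $\underline{\eta}\in\mathbb S$ and let $\bx\in\mathsf A$. Since $\mathsf A$ is a p-symmetric slice domain, by Theorem~\ref{Representation-Formula-SM} it suffices to determine $f$ on the slice $\mathsf A_{\underline{\eta}}=\mathsf A\cap(\mathbb R^{p+1}\oplus\underline{\eta}\mathbb R)$, which is an annular region in $\mathbb R^{p+2}$ around the ``disk'' $\{\bx_p\in\mathbb R^{p+1}:|\bx_p|<\rho_2\}\cap\cdots$; more precisely $\mathsf A_{\underline\eta}$ contains the sphere $S_{\underline\eta}$. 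First I would apply the Splitting Lemma (Lemma~\ref{Splitting-lemma}) to write $f_{\underline\eta}=\sum_A F_A I_A$ with $F_A$ monogenic on $\mathsf A_{\underline\eta}\subset\mathbb R^{p+2}$, and invoke the classical Laurent theorem for monogenic functions in $\mathbb R^{p+2}$ (see e.g.\ \cite[Section 9]{Brackx} or \cite{Gurlebeck}): each $F_A$ decomposes on the annulus as a sum of an ``interior'' series in the Fueter polynomials $\mathcal P_{\underline\eta,\mathrm k}$ and an ``exterior'' series in the kernel functions $\mathcal Q_{\underline\eta,\mathrm k}=\frac{(-1)^k}{\mathrm k!}\partial_{\mathrm k}E$, with coefficients given by integrals over $\partial S_{\underline\eta}$ against the appropriate Fueter kernels. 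Reassembling $f_{\underline\eta}=\sum_A F_A I_A$ gives
$$
f_{\underline\eta}(\bx)=\sum_{k=0}^{+\infty}\sum_{|\mathrm k|=k}\mathcal P_{\underline\eta,\mathrm k}(\bx')a_{\mathrm k}
+\sum_{k=0}^{+\infty}\sum_{|\mathrm k|=k}\mathcal Q_{\underline\eta,\mathrm k}(\bx')b_{\mathrm k}
$$
with $a_{\mathrm k},b_{\mathrm k}$ exactly the stated integrals (the $I_A$'s absorb into $f(\by)$ inside the integrand), and with uniform convergence on compact subsets of $\mathsf A_{\underline\eta}$.

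Next I would promote this slice identity to all of $\mathsf A$ using the Representation Formula. Recall from \eqref{ReprPk} (and the analogous identity for $\mathcal Q_{\mathrm k}$ recorded just before Theorem~\ref{Cauchy-formula-slice}) that
$$
\mathcal P_{\mathrm k}(\bx)=\tfrac12(1-\underline\omega\underline\eta)\mathcal P_{\underline\eta,\mathrm k}(\bx')+\tfrac12(1+\underline\omega\underline\eta)\mathcal P_{\underline\eta,\mathrm k}(\bx'_\diamond),
$$
and likewise for $\mathcal Q$. Applying \eqref{Representation-Formula-eq} to $f$ at $\bx=\bx_p+r\underline\omega$ and substituting the two slice expansions (at $\bx'$ and at $\bx'_\diamond$, the latter coming from $f(\bx_p-r\underline\eta)$) term by term, the same algebraic manipulation that turns the pair of slice series into a global series — identical to the one used in the proof of Theorem~\ref{Taylor-theorem} — yields
$$
f(\bx)=\sum_{k=0}^{+\infty}\sum_{|\mathrm k|=k}\mathcal P_{\mathrm k}(\bx)a_{\mathrm k}+\sum_{k=0}^{+\infty}\sum_{|\mathrm k|=k}\mathcal Q_{\mathrm k}(\bx)b_{\mathrm k}.
$$
Since by the identity theorem (Theorem~\ref{Identity-theorem}) this global expression agrees with $f$ and the coefficients $a_{\mathrm k},b_{\mathrm k}$ do not depend on $\underline\eta$, the formula is established.

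For the convergence statement I would argue as follows: the interior part converges uniformly on $\{|\bx|\le\rho'\}$ for every $\rho'<\rho_2$ because, by Remark~\ref{Cauchy-Ker-expansion}, $\sum_{|\mathrm k|=k}\mathcal P_{\mathrm k}(\bx)\mathcal Q_{\mathrm k}(\by)$ is exactly the Taylor expansion of the bounded, continuous Cauchy kernel $\mathcal E_{\by}(\bx)$ for $|\bx|<|\by|$, uniformly for $\by$ on $\partial S_{\underline\eta}$; estimating the tail of the $a_{\mathrm k}$-series by this kernel bound gives geometric decay in $\rho'/\rho$. Symmetrically, the exterior part converges uniformly on $\{|\bx|\ge\rho''\}$ for every $\rho''>\rho_1$ using the expansion of $\mathcal E_{\by}(\bx)$ for $|\bx|>|\by|$ (equivalently, applying the interior estimate to $\bx\mapsto\bx^{-1}$, which the homogeneity degree $-(k+p+1)$ of $\mathcal Q_{\mathrm k}$ makes precise). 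Intersecting, both series converge uniformly on every compact subset of $\mathsf A$.

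The main obstacle I anticipate is purely bookkeeping: making sure that the two ``sides'' of the annular Laurent decomposition for monogenic functions in $\mathbb R^{p+2}$ are attached to the correct Fueter-type kernels $\mathcal P_{\underline\eta,\mathrm k}$ and $\mathcal Q_{\underline\eta,\mathrm k}$ with the precise normalization of the coefficients — i.e.\ that the classical monogenic Laurent coefficients really are $\int_{\partial S_{\underline\eta}}\mathcal Q_{\underline\eta,\mathrm k}(\by)n(\by)F_A(\by)\,dS(\by)$ and $\int_{\partial S_{\underline\eta}}\mathcal P^{R}_{\underline\eta,\mathrm k}(\by)n(\by)F_A(\by)\,dS(\by)$ — and that these reassemble to the stated $a_{\mathrm k},b_{\mathrm k}$ without the $I_A$'s obstructing (they commute past the real surface measure and are reabsorbed into $f$). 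Everything else — the passage through the Representation Formula and the convergence estimates — is a verbatim repeat of arguments already carried out for Theorems~\ref{Taylor-theorem} and~\ref{Cauchy-formula-slice}.
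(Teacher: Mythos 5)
Your proposal is correct and follows essentially the same route the paper intends: the paper's own proof is just the remark that one repeats the strategy of Theorem \ref{Taylor-theorem} (Splitting Lemma plus the classical slice-wise expansion, then globalization via the Representation Formula and the identities expressing $\mathcal{P}_{\mathrm{k}}$ and $\mathcal{Q}_{\mathrm{k}}$ in terms of $\mathcal{P}_{\underline{\eta},\mathrm{k}}$ and $\mathcal{Q}_{\underline{\eta},\mathrm{k}}$), which is exactly what you spell out, including the correct placement of $\mathcal{P}^{R}_{\underline{\eta},\mathrm{k}}$ in the coefficient $b_{\mathrm{k}}$ so that the pairing under the integral is contour-independent.
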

\begin{proof}
The result can be proved by using by the same strategy as in the proof of  Theorem \ref{Taylor-theorem}, so we omit the details.
\end{proof}

We conclude this section, by proving a maximum modulus principle for  generalized partial-slice monogenic functions which follows from  the Cauchy  formula in Theorem \ref{Cauchy-formula-monogenic}. For some related versions of the maximum modulus principle, we refer to \cite[Theorem 7.1]{Gentili-Stoppato-Struppa-13} for quaternionic  slice regular functions and \cite[Theorem 3.1]{Ren-Xu} for  slice monogenic functions.
\begin{theorem}{\bf (Maximum modulus principle)}
Let $\Omega\subseteq \mathbb{R}^{p+q+1}$ be a  slice domain and $f:\Omega\rightarrow \mathbb{R}_{p+q}$ be a  generalized partial-slice monogenic function.  If $|f|$ has a relative maximum at some point in $\Omega$, then $f$ is constant.
\end{theorem}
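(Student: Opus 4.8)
The plan is to reduce the statement, via the Splitting Lemma, to the classical maximum modulus principle on a single slice $\Omega_{\underline{\omega}}$, and then to propagate the conclusion to all of $\Omega$ by the identity theorem. Assume $|f|$ has a relative maximum at some $\bx_0\in\Omega$ and put $M=|f(\bx_0)|$. Choose $\underline{\omega}\in\mathbb S$ with $\bx_0=\bx_{0,p}+r_0\underline{\omega}$, $\bx_{0,p}\in\R^{p+1}$, $r_0\geq 0$ (any $\underline{\omega}$ works if $\bx_0\in\R^{p+1}$). Since $\Omega$ is a slice domain, $\Omega_{\underline{\omega}}$ is a domain in $\R^{p+2}$ containing $\bx_0$, and the restriction $f_{\underline{\omega}}$ satisfies $(\Dxp+\underline{\omega}\partial_r)f_{\underline{\omega}}=0$. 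By Lemma \ref{Splitting-lemma}, $f_{\underline{\omega}}=\sum_{A}F_A I_A$ with each $F_A:\Omega_{\underline{\omega}}\to\mathbb R_{p+1}$ monogenic; since monogenic functions are harmonic, and the subspaces $\mathbb R_{p+1}I_A$ are mutually orthogonal coordinate subspaces of $\mathbb R_{p+q}$ with $|F_AI_A|=|F_A|$, every real component of $f_{\underline{\omega}}$ is harmonic on $\Omega_{\underline{\omega}}$ and $|f_{\underline{\omega}}|^{2}=\sum_A|F_A|^{2}$ is subharmonic there. By hypothesis $|f_{\underline{\omega}}|$, hence $|f_{\underline{\omega}}|^{2}$, attains an interior relative maximum at $\bx_0$.

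I would then run the usual maximum modulus argument on the domain $\Omega_{\underline{\omega}}$. A subharmonic function attaining an interior local maximum on a domain is constant, so $|f_{\underline{\omega}}|\equiv M$ on $\Omega_{\underline{\omega}}$. To upgrade constancy of the modulus to constancy of $f_{\underline{\omega}}$, I would use the mean value property for monogenic functions, which is a consequence of the Cauchy integral formula in Theorem \ref{Cauchy-formula-monogenic}: for all small $\rho$,
$$M=|f_{\underline{\omega}}(\bx_0)|=\Big|\frac{1}{|S_\rho|}\int_{S_\rho}f_{\underline{\omega}}\,dS\Big|\leq\frac{1}{|S_\rho|}\int_{S_\rho}|f_{\underline{\omega}}|\,dS=M,$$
where $S_\rho$ is the sphere of radius $\rho$ about $\bx_0$ in $\R^{p+2}$. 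Thus equality holds in the triangle inequality for the $\mathbb R_{p+q}$-valued average; since $f_{\underline{\omega}}$ is continuous with $|f_{\underline{\omega}}|\equiv M$, this forces $f_{\underline{\omega}}$ to equal one and the same constant $c\in\mathbb R_{p+q}$ on every such sphere, hence on a neighbourhood of $\bx_0$ in $\Omega_{\underline{\omega}}$.

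Finally, $f$ coincides with the constant function $c\in\mathcal{GSM}(\Omega)$ on this neighbourhood, which contains a $(p+1)$-dimensional smooth disk lying in $\Omega_{\underline{\omega}}$ (for instance $\{\bx_p+r_0\underline{\omega}:\ |\bx_p-\bx_{0,p}|<\varepsilon\}$ for small $\varepsilon$). Since $\Omega$ is a slice domain, the identity theorem (Theorem \ref{Identity-theorem}) gives $f\equiv c$ on $\Omega$, as desired. The only step with genuine content is the passage from constant modulus to constant function: it rests on the equality case of the integral triangle inequality in the real vector space $\mathbb R_{p+q}$ together with the spherical mean value property; everything else is bookkeeping with the Splitting Lemma and the identity theorem.
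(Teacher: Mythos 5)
Your proof is correct and follows the same skeleton as the paper's: restrict to the slice $\Omega_{\underline{\omega}}$ through the maximum point, use the Cauchy formula of Theorem \ref{Cauchy-formula-monogenic} to obtain the spherical mean value property, deduce that $|f_{\underline{\omega}}|$ is locally constant, upgrade this to local constancy of $f_{\underline{\omega}}$ itself, and conclude with the identity theorem applied to a $(p+1)$-dimensional manifold inside the neighbourhood where $f$ is constant. The one place you genuinely diverge is the upgrade step: the paper differentiates $|f_{\underline{\omega}}|^{2}=\sum_{A}f_{A}^{2}$ twice and uses harmonicity of the real components to force $\sum_{A}|\nabla f_{A}|^{2}=0$, whereas you invoke the equality case of the integral triangle inequality for the $\mathbb{R}_{p+q}$-valued mean value integral, which pins $f$ to a fixed nonnegative multiple of $f(\bx_0)$ on each small sphere; your route is a little cleaner since it needs only continuity and the mean value identity rather than harmonicity of the components. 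One caveat: the principle you quote, that a subharmonic function with an \emph{interior local} maximum is constant on the whole domain, is false in general (consider $\max(0,\mathrm{Re}\,z-1)$ near $z=0$ in $\mathbb{C}$); a local maximum only yields constancy on a neighbourhood of the maximum point, so the claim $|f_{\underline{\omega}}|\equiv M$ on all of $\Omega_{\underline{\omega}}$ is unjustified as stated. This does not damage the proof, because everything afterwards uses only constancy of $|f_{\underline{\omega}}|$ near $\bx_0$ --- indeed the subharmonicity detour is redundant, since your own chain $M=|f(\bx_0)|\le\frac{1}{|S_\rho|}\int_{S_\rho}|f|\,dS\le M$ together with continuity already gives $|f_{\underline{\omega}}|\equiv M$ on each small sphere about $\bx_0$, exactly as in the paper.
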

\begin{proof}
Let us assume that $|f|$ has a relative maximum at   $\bx \in \Omega$. We can write  $\bx=\bx_p+r  \underline{\omega}$ for some $\underline{\omega}\in \mathbb{S}$.  Let us consider the ball $B=B(\bx,\rho)=\{\by \in \mathbb{R}^{p+q+1}: |\by-\bx|<\rho \}$, and let $ \rho>0$ be small enough such that  $B_{\underline{{\omega}} }\subset \Omega_{\underline{{\omega}}} $. Then by Theorem \ref{Cauchy-formula-monogenic} we have
$$f(\bx)=\int_{\partial B_{\underline{\omega}} } E(\by- \bx) n(\by)f(\by) dS(\by), \quad  \bx \in B_{\underline{{\omega}}},   $$
where  $n(\by)=\frac{ \by-\bx}{|\by-\bx|}$ is the unit exterior normal to $\partial B_{\underline{\omega}}$ at $\by$.

Note that
$$E(\by- \bx ) n(\by)= \frac{1}{\sigma_{p+1}} \frac{\overline{\by-\bx}}{|\by-\bx|^{p+2}}\frac{\by-\bx}{|\by-\bx|}=
\frac{1}{\sigma_{p+1}} \frac{1}{|\by-\bx|^{p+1}}=\frac{1}{\sigma_{p+1}\rho^{p+1}}.$$
It follows that
 $$f(\bx)=\frac{1}{\sigma_{p+1}\rho^{p+1}}\int_{\partial B_{\underline{\omega}}} f(\by) dS(\by)=\frac{1}{\sigma_{p+1}\rho^{p+1}}\int_{\partial (B(0,\rho)_{\underline{\omega}})} f(\bx+\by) dS(\by),   $$
which implies that
$$|f(\bx)|\leq \frac{1}{\sigma_{p+1}\rho^{p+1}}\int_{\partial (B(0,\rho)_{\underline{\omega}})} |f(\bx+\by)| dS(\by)\leq|f(\bx)|.   $$
The above inequality forces the fact that  $|f_{\underline{\omega} }|$   is a constant in a small neighbourhood of $\bx$ in $\Omega_{\underline{\omega}}$, so is  $f_{\underline{\omega}}$.  To prove this assertion in precise terms, let us write
$$f_{\underline{\omega}}=\sum_{A\in \mathcal{P}(p+q)}f_{A}e_{A},  \quad f_{A}\in \mathbb{R},$$
where   $\mathcal{P}(p+q)$ is the permutation group with $p+q$ elements and $A$ is a multi-index.

Since $|f_{\underline{\omega}}|^{2}=\sum_{A\in \mathcal{P}(p+q)}f_{A}^{2}$ is constant, the derivatives of $|f_{\underline{\omega}}|^{2}$ with respect to variable $x_{i},i=0,1,\ldots,p+1,$ are zero, namely
$$\sum_{A\in \mathcal{P}(p+q)}f_A(\partial_{x_i}f_{A})=0.$$
A second differentiation with  respect to the variable $x_{i},i=0,1,\ldots,p+1,$  gives that
$$0=\sum_{A\in \mathcal{P}(p+q)} (\sum_{i=0}^{p+1}(\partial_{x_i}f_{A})^{2}+f_{A} (\Delta f_{A}))=\sum_{A\in \mathcal{P}(p+q)} (\sum_{i=0}^{p+1} \partial_{x_i}f_{A})^{2},$$
where $\Delta$ is the Laplacian   in $\mathbb{R}^{p+2}$.

Consequently, every $f_{A}$ is  constant in $\Omega_{\underline{\omega}}$, and so is  $f_{\underline{\omega}}$. Therefore,
 $f$ is constant in $\Omega$  by the identity theorem in Theorem \ref{Identity-theorem}.
\end{proof}

\section{Generalized partial-slice  functions}\label{Sec4}
The Representation Formula shows that generalized partial-slice monogenic functions have a special form, in fact they can be written as
$$
f(\bx_p+\underline{\omega}r)=F_1(\bx_p,r)+\underline{\omega}F_2(\bx_p,r)
$$
where $F_1$, $F_2$ do not depend on $\underline{\omega}$. When $p=0$ the function on the right-hand side is slice monogenic and of slice type. These functions appear in the Fueter-Sce construction, see \cite{Fueter, Colombo-Sabadini-Struppa-20}, as well as in the notion of slice regular functions as treated in \cite{Ghiloni-Perotti-11}. In this construction, one can use the notion of stem functions that we are generalizing to this case.

An open set $D$ of $\mathbb{R}^{p+2}$  is called invariant under the reflection  of the $(p+2)$-th variable if
$$ \bx':=(\bx_p,r) \in D \Longrightarrow   \bx_\diamond':=(\bx_p,-r)  \in D.$$
The  \textit{p-symmetric completion} $ \Omega_{D}$ of  $D$ is defined by
$$\Omega_{D}=\bigcup_{\underline{\omega} \in \mathbb{S}} \, \big \{\bx_p+r\underline{\omega}\  : \ \exists \ \bx_p \in \mathbb{R}_{p}^{0}\oplus \mathbb{R}_{p}^{1},\ \exists \ r\geq 0,\  \mathrm{s.t.} \ (\bx_p,r)\in D \big\}.$$
\begin{definition}
A function $F: D\longrightarrow  \mathbb{R}_{p+q} \otimes_{\mathbb R}\mathbb C$ in an open set $D\subseteq  \mathbb{R}^{p+2}$, which is invariant under the reflection  of the $(p+2)$-th variable, is called  a \textit{stem function} if
the $\mathbb{R}_{p+q} $-valued components  $F_1, F_2$ of $F=F_1+iF_2$ satisfy
$$ F_1(\bx_{\diamond}')= F_1(\bx'), \qquad  F_2(\bx_{\diamond}')=-F_2(\bx'), \qquad  \bx'=(\bx_p,r) \in D.$$
Each stem function $F$ induces a (left)  generalized partial-slice
function $f=\mathcal I(F): \Omega_{D} \longrightarrow \mathbb{R}_{p+q}$ given by
 $$f(\bx):=F_1(\bx')+\underline{\omega} F_2(\bx'), \qquad   \bx=\bx_p+r\underline{\omega}   \in \mathbb{R}^{p+q+1}, \underline{\omega}\in \mathbb{S}.$$
\end{definition}

We denote the set of  all induced  generalized partial-slice functions  on $\Omega_{D}$  by
 $$ {\mathcal{GS}}(\Omega_{D}):=\Big\{f=\mathcal I(F):    \ F \ {\mbox {is a stem function on }} D  \Big\}.$$
 Each generalized partial-slice function  $f$ is induced by a unique stem function $F$ since $F_1$ and $F_2$  are determined by  $f$. In fact, it holds that
$$F_1(\bx')=\frac{1}{2}\big( f(\bx)+f(\bx_{\diamond}) \big), \quad \bx'  \in D,$$
and
\begin{eqnarray*}
F_2(\bx')=
\left\{
\begin{array}{ll}
-\frac{1}{2  }\underline{\omega} \big( f(\bx)-f(\bx_{\diamond}) \big)     &\mathrm {if} \ \bx'\in  D\setminus \mathbb{R}^{p+1},
\\
0,   &\mathrm {if} \ \bx'\in D \cap \mathbb{R}^{p+1}.
\end{array}
\right.
\end{eqnarray*}

Furthermore,  we can establish the following formula  for generalized partial-slice functions.
\begin{theorem}  {\bf(Representation Formula, II)}  \label{Representation-Formula-SR}
Let $f\in {\mathcal{GS}}(\Omega_{D})$.  Then it holds that, for every  $\bx=\bx_p+r\underline{\omega} \in \Omega_{D}$ with $\underline{\omega}\in \mathbb{S}$,
\begin{equation*}\label{Rf of slice}
f(\bx)=(\underline{\omega}-\underline{\omega}_{2})(\underline{\omega}_{1}-\underline{\omega}_{2})^{-1}f(\bx_p+r\underline{\omega}_{1}) -(\underline{\omega}-\underline{\omega}_{1})(\underline{\omega}_{1}-\underline{\omega}_{2})^{-1}f(\bx_p+r\underline{\omega}_{2}),
\end{equation*}
 for all $\underline{\omega}_{1}\neq\underline{\omega}_{2}\in \mathbb{S}$.
 In particular, $\underline{\omega}_{1}=-\underline{\omega}_{2}=\underline{\eta}\in \mathbb{S},$
\begin{eqnarray*}
 f(\bx)
&=&\frac{ 1}{2} (1-\underline{\omega}\underline{\eta})    f(\bx_p+r\underline{\eta})+
 \frac{ 1}{2} (1+\underline{\omega}\underline{\eta})    f(\bx_p-r\underline{\eta})
 \\
 &=& \frac{1}{2} (f(\bx_p+r\underline{\eta} )+f(\bx_p-r\underline{\eta} ) )+
\frac{ 1}{2} \underline{\omega}\underline{\eta}(  f(\bx_p-r\underline{\eta} )-f(\bx_p+r\underline{\eta})).
\end{eqnarray*}
\end{theorem}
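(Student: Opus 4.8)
The plan is to reduce the statement to an elementary fact about affine interpolation. A generalized partial-slice function is, by definition, of the form $f(\bx_p+r\underline{\omega})=F_1(\bx_p,r)+\underline{\omega}F_2(\bx_p,r)$ with $F_1,F_2$ independent of $\underline{\omega}$; equivalently, fixing $\bx_p$ and $r\geq0$, the assignment $\underline{\omega}\mapsto f(\bx_p+r\underline{\omega})$ is the restriction to $\mathbb S$ of the $\mathbb{R}_{p+q}$-valued affine map $t\mapsto F_1(\bx_p,r)+tF_2(\bx_p,r)$ in the real variable $t$. So the first step is to record this affine dependence explicitly.

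Next I would evaluate this affine map at $\underline{\omega}_1$ and $\underline{\omega}_2$ to obtain the linear system $f(\bx_p+r\underline{\omega}_j)=F_1+\underline{\omega}_jF_2$ for $j=1,2$, and solve for $F_1$ and $F_2$ in terms of the two sampled values. Subtracting gives $f(\bx_p+r\underline{\omega}_1)-f(\bx_p+r\underline{\omega}_2)=(\underline{\omega}_1-\underline{\omega}_2)F_2$, whence $F_2=(\underline{\omega}_1-\underline{\omega}_2)^{-1}\bigl(f(\bx_p+r\underline{\omega}_1)-f(\bx_p+r\underline{\omega}_2)\bigr)$, using that $\underline{\omega}_1-\underline{\omega}_2$ is a nonzero paravector (in fact a vector) in $\mathbb{R}^{p+q}$, hence invertible in the Clifford algebra. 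Then $F_1=f(\bx_p+r\underline{\omega}_1)-\underline{\omega}_1F_2$. Substituting back into $f(\bx)=F_1+\underline{\omega}F_2=f(\bx_p+r\underline{\omega}_1)+(\underline{\omega}-\underline{\omega}_1)F_2$ and regrouping yields
$$
f(\bx)=f(\bx_p+r\underline{\omega}_1)+(\underline{\omega}-\underline{\omega}_1)(\underline{\omega}_1-\underline{\omega}_2)^{-1}\bigl(f(\bx_p+r\underline{\omega}_1)-f(\bx_p+r\underline{\omega}_2)\bigr),
$$
which, after collecting the coefficients of $f(\bx_p+r\underline{\omega}_1)$ and $f(\bx_p+r\underline{\omega}_2)$ and using $(\underline{\omega}-\underline{\omega}_1)=(\underline{\omega}-\underline{\omega}_2)-(\underline{\omega}_1-\underline{\omega}_2)$, is exactly \eqref{Rf of slice}.

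Finally I would specialize to $\underline{\omega}_1=-\underline{\omega}_2=\underline{\eta}$: here $\underline{\omega}_1-\underline{\omega}_2=2\underline{\eta}$ so $(\underline{\omega}_1-\underline{\omega}_2)^{-1}=-\tfrac12\underline{\eta}$ (since $\underline{\eta}^2=-1$), and plugging this into \eqref{Rf of slice} gives the two displayed forms after a direct simplification, the second following from the first by expanding $\tfrac12(1-\underline{\omega}\underline{\eta})$ and $\tfrac12(1+\underline{\omega}\underline{\eta})$. I do not anticipate a genuine obstacle: the only point requiring a word of care is the invertibility of $\underline{\omega}_1-\underline{\omega}_2$ and the fact that $\underline{\omega}$, $\underline{\omega}_1$, $\underline{\omega}_2$ all commute with the scalar-type quantities $F_1,F_2$ only through their being combined linearly — so one must keep the noncommutative factors on the correct side throughout, which is why I solve for $F_2$ by left-multiplying by $(\underline{\omega}_1-\underline{\omega}_2)^{-1}$ and never reorder Clifford factors.
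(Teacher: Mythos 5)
Your proposal is correct and follows essentially the same route as the paper's proof: both evaluate the stem-function decomposition $f(\bx_p+r\underline{\omega}_j)=F_1(\bx')+\underline{\omega}_jF_2(\bx')$ at the two points, solve the resulting linear system by left-multiplying by $(\underline{\omega}_1-\underline{\omega}_2)^{-1}$, and substitute back into $f(\bx)=F_1(\bx')+\underline{\omega}F_2(\bx')$. The only cosmetic difference is that you eliminate $F_1$ via the equation at $\underline{\omega}_1$ whereas the paper uses the one at $\underline{\omega}_2$.
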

\begin{proof}
Let $\bx=\bx_p+r\underline{\omega} \in \Omega_{D}$ with $\underline{\omega}\in \mathbb{S}$.  By definition, it follows that, for all  $\underline{\omega}_{1},\underline{\omega}_{2}\in \mathbb{S}$,
 $$f(\bx_p+r\underline{\omega}_{1})=F_1(\bx')+\underline{\omega}_{1} F_2(\bx'),$$
and
$$f(\bx_p+r\underline{\omega}_{2})=F_1(\bx')+\underline{\omega}_{2} F_2(\bx').$$
Hence, for $\underline{\omega}_{1}\neq\underline{\omega}_{2},$
 $$ F_2(\bx')=(\underline{\omega}_{1}-\underline{\omega}_{2})^{-1}(f(\bx_p+r\underline{\omega}_{1})-f(\bx_p+r\underline{\omega}_{2})),$$
and then
  \begin{eqnarray*}
 F_1(\bx')
&=&f(\bx_p+r\underline{\omega}_{2})-\underline{\omega}_{2}F_2(\bx')
 \\
 &=&f(\bx_p+r\underline{\omega}_{2})-\underline{\omega}_{2}(\underline{\omega}_{1}-\underline{\omega}_{2})^{-1}
 (f(\bx_p+r\underline{\omega}_{1})-f(\bx_p+r\underline{\omega}_{2}))
 \\
 &=&\underline{\omega}_{1}(\underline{\omega}_{1}-\underline{\omega}_{2})^{-1}f(\bx_p+r\underline{\omega}_{2})-\underline{\omega}_{2}(\underline{\omega}_{1}-\underline{\omega}_{2})^{-1}
 f(\bx_p+r\underline{\omega}_{1}).
 \end{eqnarray*}
Therefore
  \begin{eqnarray*}
f(\bx)
&=&F_1(\bx')+\underline{\omega}F_2(\bx')
 \\
 &=&\underline{\omega}_{1}(\underline{\omega}_{1}-\underline{\omega}_{2})^{-1}f(\bx_p+r\underline{\omega}_{2})-\underline{\omega}_{2}(\underline{\omega}_{1}-\underline{\omega}_{2})^{-1}
 f(\bx_p+r\underline{\omega}_{1})
 \\
 &+&\underline{\omega}(\underline{\omega}_{1}-\underline{\omega}_{2})^{-1}(f(\bx_p+r\underline{\omega}_{1})-f(\bx_p+r\underline{\omega}_{2}))
 \\
 &=&(\underline{\omega}-\underline{\omega}_{2})(\underline{\omega}_{1}-\underline{\omega}_{2})^{-1}f(\bx_p+r\underline{\omega}_{1}) -(\underline{\omega}-\underline{\omega}_{1})(\underline{\omega}_{1}-\underline{\omega}_{2})^{-1}f(\bx_p+r\underline{\omega}_{2}),
\end{eqnarray*}
as desired.\end{proof}

The Representation Formula above  allows  to present  a Cauchy-Pompeiu integral  formula for generalized partial-slice functions. This formula is valid, in particular, on a p-symmetric slice domain, and so it is a further generalization of the Cauchy  formula in Theorem \ref{Cauchy-formula-slice}.

\begin{theorem}[Cauchy-Pompeiu  formula]\label{Cauchy-Pompeiu}
Let $f=\mathcal I(F)\in {\mathcal{GS}}(\Omega_{D})$ with  its stem  function $F\in C^{1}(\overline{D})$ and set $\Omega=\Omega_{D}$. If  $U$ is a domain in  $\mathbb{R}^{p+q+1}$ such that  $U_{\underline{\eta} }\subset \Omega_{\underline{\eta}} $  is a bounded domain in $\mathbb{R}^{p+2}$ with    smooth boundary $\partial U_{\underline{\eta}}\subset\Omega_{\underline{\eta}}$ for some $\underline{\eta}\in \mathbb{S}$, then
$$f(\bx)=\int_{\partial U_{\underline{\eta}}} \mathcal{E}_{\by}(\bx) n(\by)f(\by) dS(\by)-
\int_{  U_{\underline{\eta}}} \mathcal{E}_{\by}(\bx)  (D_{\underline{\eta}}f)(\by) dV(\by), \quad  \bx \in U,   $$
where  $n(\by)=\sum_{i=0}^{p}n_i(\by) e_i+n_{p+1}(\by)\underline{\eta}$ is the unit exterior normal to $\partial U_{\underline{\eta}}$ at $\by$,
 $dS$ and  $dV$ stand  for  the   classical   Lebesgue surface  element and volume element  in $\mathbb{R}^{p+2}$, respectively.
\end{theorem}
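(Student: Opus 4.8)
The plan is to reduce the Cauchy--Pompeiu formula for generalized partial-slice functions to the classical Cauchy--Pompeiu formula in $\mathbb{R}^{p+2}$, exactly in the spirit of the proof of Theorem \ref{Cauchy-formula-slice}, but keeping track of the extra volume term that arises because $f$ is only assumed to be $C^1$ rather than in the kernel of $D_{\underline{\eta}}$. First I would fix the chosen $\underline{\eta}\in\mathbb S$ and restrict everything to the slice $\Omega_{\underline{\eta}}\subseteq\mathbb{R}^{p+2}$: the restriction $f_{\underline{\eta}}$ is a $C^1$ function on $\overline{U_{\underline{\eta}}}$ with values in $\mathbb{R}_{p+q}$, and $E_{\by}(\cdot)=E(\by-\cdot)$ restricted to $\mathrm{H}_{\underline{\eta}}\setminus\{\by\}$ is (left and right) in the kernel of $D_{\bx_p}+\underline{\eta}\partial_r$, as already noted in the paragraph preceding Definition of the slice Cauchy kernel. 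Hence the classical Cauchy--Pompeiu formula for the Dirac-type operator $D_{\bx_p}+\underline{\eta}\partial_r$ on the bounded domain $U_{\underline{\eta}}\subset\mathbb{R}^{p+2}$ (see e.g. \cite[Corollary 9.6]{Brackx} or \cite{Gurlebeck}) applies componentwise to $f_{\underline{\eta}}$ and yields, for $\bx\in U_{\underline{\eta}}$,
$$
f_{\underline{\eta}}(\bx)=\int_{\partial U_{\underline{\eta}}} E(\by-\bx)\, n(\by)\, f_{\underline{\eta}}(\by)\, dS(\by)
-\int_{U_{\underline{\eta}}} E(\by-\bx)\,(D_{\underline{\eta}}f)(\by)\, dV(\by),
$$
since on the slice $D_{\underline{\eta}}f = (D_{\bx_p}+\underline{\eta}\partial_r)f_{\underline{\eta}}$ by definition. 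This establishes the desired identity for $\bx$ lying on the particular slice $U_{\underline{\eta}}$.

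Next I would upgrade this slicewise identity to all of $U$ by invoking the Representation Formula for generalized partial-slice functions, Theorem \ref{Representation-Formula-SR}. Both sides of the claimed formula, as functions of $\bx=\bx_p+r\underline{\omega}$, are generalized partial-slice functions: the left-hand side is $f\in\mathcal{GS}(\Omega_D)$ by hypothesis, while on the right-hand side the kernels $\mathcal E_{\by}(\bx)$ and the analogously defined extension of $E(\by-\bx)$ in the volume integral are, for each fixed $\by$, generalized partial-slice (in fact generalized partial-slice monogenic away from $[\by]$) in the variable $\bx$ by construction via Theorem \ref{extthm}; integrating in $\by$ against the $\bx$-independent measures $n(\by)\,dS(\by)$ and $(D_{\underline{\eta}}f)(\by)\,dV(\by)$ preserves this property, because the defining relations $F_1(\bx'_\diamond)=F_1(\bx')$, $F_2(\bx'_\diamond)=-F_2(\bx')$ pass through the integral. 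Since $\mathcal E_{\by}(\bx)$ reduces to $E_{\by}(\bx)=E(\by-\bx)$ when $\bx,\by$ lie on the same slice $\mathrm H_{\underline{\eta}}$ (and likewise for the volume-integral kernel), the two generalized partial-slice functions of $\bx$ agree on the whole slice $U_{\underline{\eta}}$; by the uniqueness part of the Representation Formula (equivalently, a generalized partial-slice function is determined by its values on one slice together with its even/odd structure), they must coincide on all of $U=\Omega_D\cap(\text{the }\bx\text{-region})$, giving the stated formula. The independence of the right-hand side from the choice of $\underline{\eta}$ follows the same way.

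The step I expect to be the main obstacle is the careful justification that the two sides are genuinely \emph{the same} generalized partial-slice function rather than merely agreeing slicewise after the fact — that is, checking that the right-hand side, built by extending kernels off the slice $\mathrm H_{\underline{\eta}}$ via the operator ``$\mathrm{ext}$'' and then integrating in $\by$, really does produce $F_1(\bx')+\underline{\omega}F_2(\bx')$ with $F_1,F_2$ independent of $\underline{\omega}$, \emph{and} that this extension commutes with the $\by$-integration over $\partial U_{\underline{\eta}}$ and over $U_{\underline{\eta}}$. This requires a dominated-convergence / Fubini-type argument on the series or integral representation of $\mathcal E_{\by}(\bx)$, using the $C^1$ hypothesis on $F$ to control $D_{\underline{\eta}}f$ on the compact set $\overline{U_{\underline{\eta}}}$ and the fact that $\bx$ stays in the interior of $U_{\underline{\eta}}$ so that $|\by-\bx|$ is bounded below on $\partial U_{\underline{\eta}}$ while the kernel singularity on $U_{\underline{\eta}}$ is of order $|\by-\bx|^{-(p+1)}$, which is integrable in dimension $p+2$. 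Once this interchange is secured, the rest is a direct application of Theorems \ref{Cauchy-formula-monogenic}, \ref{extthm} and \ref{Representation-Formula-SR}, so I would state those interchange justifications explicitly but not belabor the elementary estimates.
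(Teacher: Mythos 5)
Your proposal is correct and follows essentially the same route as the paper: apply the classical Cauchy--Pompeiu formula componentwise on the slice $U_{\underline{\eta}}$ (the paper does this on the $\mathbb{R}_{p+1}$-valued components $F_A^j$ of the stem function, you do it directly on $f_{\underline{\eta}}$ --- same thing), then globalize via the Representation Formula, Theorem \ref{Representation-Formula-SR}. The one substantive comment is that the step you single out as ``the main obstacle'' --- justifying that the extension of the kernel commutes with the $\by$-integration, via dominated convergence or Fubini on a series representation of $\mathcal{E}_{\by}(\bx)$ --- is not actually needed, and the paper avoids it entirely. The direct route is: apply Representation Formula II to $f$ itself (it is in $\mathcal{GS}(\Omega_D)$ by hypothesis, so no monogenicity is required), obtaining $f(\bx_p+r\underline{\omega})=\frac{1}{2}(1-\underline{\omega}\underline{\eta})f(\bx_p+r\underline{\eta})+\frac{1}{2}(1+\underline{\omega}\underline{\eta})f(\bx_p-r\underline{\eta})$; then substitute the slice Cauchy--Pompeiu identities at the two points $\bx_p\pm r\underline{\eta}$. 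The factors $\frac{1}{2}(1\mp\underline{\omega}\underline{\eta})$ are constant in the integration variable $\by$, so they pull inside the boundary and volume integrals by plain linearity, and the resulting combination $\frac{1}{2}(1-\underline{\omega}\underline{\eta})E_{\by}(\pi_{\by}(\bx))+\frac{1}{2}(1+\underline{\omega}\underline{\eta})E_{\by}(\pi_{\by}(\bx)_{\diamond})$ is the \emph{definition} of $\mathcal{E}_{\by}(\bx)$ in \eqref{slice-Cauchy-kernel} --- a finite linear combination, not a series --- so no interchange-of-limits argument arises. If you restructure your write-up this way, the ``uniqueness of the extension'' argument and the convergence estimates can be dropped; the only point worth a remark is that the slice formula must be applied at both $\bx_p+r\underline{\eta}$ and its reflection $\bx_p-r\underline{\eta}$, which is where the reflection-invariance of $D$ and the p-symmetry of $\Omega_D$ enter.
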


\begin{proof}
Let $\bx=\bx_p+r \underline{\omega}  \in U, \bx'=(\bx_p,r)\in D$ and $\underline{\eta} \in \mathbb{S}$. Set
$$f(\bx)=F_1(\bx')+ \underline{\omega}  F_2(\bx'),$$
where the stem  function $F=F_1+iF_2\in C^{1}(D)$.  In fact, there exist  $C^{1}$ functions $F_{A}^{j}:D \rightarrow \mathbb{R}_{p+1}$ $(j=1,2)$ such that
$$ F^j = \sum_{A=\{i_{1},\ldots, i_{s}\} \subseteq \{p+2,\ldots,p+q\}}  F_{A}^{j} e_{A}, \quad j=1,2.$$
where $e_{A}=e_{i_{1}}\cdots e_{i_{s}},  A=\{i_{1},\ldots, i_{s}\} \subseteq \{p+2,\ldots,p+q\}$ with $i_{1}<\cdots<i_{s}$, and $I_{\emptyset}=1$ when $A=\emptyset$.

The   Cauchy-Pompeiu formula  (see e.g. \cite[Theorem 9.5]{Brackx}) for $C^{1}$ functions gives that
$$F_{A}^{j}(\bx')=\int_{\partial U_{\underline{\eta}}} E_{\by}(\bx_p+r \underline{\eta} ) n(\by) F_{A}^{j} (\by) dS(\by)-
\int_{U_{\underline{\eta}}} E_{\by}(\bx_p+r \underline{\eta})  (D_{\underline{\eta}}F_{A}^{j})(\by) dV(\by),    $$
where  $n, dS, dV$  are as claimed in the theorem and $D_{\underline{\eta}}F_{A}^{j}=(\sum_{i=0}^{p}e_i\partial_{x_i}+\underline{\eta} \partial_{r})F_{A}^{j}$.
Here the variable  $\by=\by_p+\widetilde{r} \underline{\eta} \in \partial U_{\underline{\eta}}$  should be  interpreted as $(\by_p,\widetilde{r} )\in \mathbb{R}^{p+2}$.

Hence,
$$F(\bx')=\int_{\partial U_{\underline{\eta}}} E_{\by}(\bx_p+r \underline{\eta} ) n(\by) F(\by) dS(\by)-
\int_{U_{\underline{\eta}}} E_{\by}(\bx_p+r \underline{\eta} )  (D_{\underline{\eta}}F)(\by) dV(\by).    $$
By replacing $i$ with $\underline{\eta}$ for the stem function $F$ in the above  formula, we  obtain
$$f(\bx_p+r\underline{\eta})=\int_{\partial U_{\underline{\eta}}} E_{\by}(\bx_p+r \underline{\eta}) n(\by) f(\by) dS(\by)-
\int_{U_{\underline{\eta}}} E_{\by}(\bx_p+r\underline{\eta})  (D_{\underline{\eta}}f)(\by) dV(\by).    $$
Furthermore, the formula above still holds when  the variable $\bx_p+r \underline{\eta}$ is replaced by $\bx_p-r \underline{\eta}$.
By using  the Representation Formula  in Theorem \ref{Representation-Formula-SR}, it follows that
$$
 f(\bx_p+r \underline{\omega} )=\frac{ 1}{2} (1- \underline{\omega}\underline{\eta})    f(\bx_p+r\underline{\eta})+
 \frac{ 1}{2} (1+ \underline{\omega}\underline{\eta})    f(\bx_p-r\underline{\eta}),$$
from which we  obtain the   conclusion
$$f(\bx)=\int_{\partial U_{\underline{\eta}}} \mathcal{E}_{\by}(\bx) n(\by)f(\by) dS(\by)-
\int_{  U_{\underline{\eta}}} \mathcal{E}_{\by}(\bx)  (D_{\underline{\eta}}f)(\by) dV(\by), \quad  \bx \in U.   $$
The proof is complete.

\end{proof}

Let us set
 $${\mathcal{GS}}^{j}(\Omega_{D}):=\Big\{f=\mathcal I(F):    \ F \ {\mbox {is a}}\  C^{j} \ {\mbox {stem function on }} D  \Big\}, \quad j=0,1.$$

\begin{definition}\label{definition-GSR}
Let $f \in {\mathcal{GS}}^{1}(\Omega_{D})$. The function $f$ is called generalized partial-slice monogenic  of type $(p,q)$ if its stem function $F=F_1+iF_2$ satisfies  the generalized Cauchy-Riemann equations
 \begin{eqnarray}\label{C-R}
 \left\{
\begin{array}{ll}
D_{\bx_p}  F_1- \partial_{r} F_2=0,
\\
 \overline{D}_{\bx_p}  F_2+ \partial_{r} F_1=0.
\end{array}
\right.
\end{eqnarray}
\end{definition}
We denote by $\mathcal {GSR}(\Omega_D)$ the set of all generalized partial-slice monogenic functions {on $\Omega_D$}.  We note that if confusion may arise with the class of functions introduced earlier in Definition   \ref{definition-slice-monogenic}, one can call them generalized partial-slice regular, following \cite{Ghiloni-Perotti-11}.
As before, the type $(p,q)$ will be omitted in the sequel.

Now we present a relationship between the set of functions  $\mathcal {GSM}$ and $\mathcal {GSR}$ defined in  p-symmetric domains.

\begin{theorem} \label{relation-GSR-GSM}

(i) For a p-symmetric domain $\Omega=\Omega_{D}$ with $\Omega  \cap \mathbb{R}^{p+1}= \emptyset$, it holds that $\mathcal {GSM}(\Omega) \supsetneqq \mathcal {GSR}(\Omega_{D})$.

(ii) For a p-symmetric domain $\Omega=\Omega_{D}$ with $\Omega  \cap \mathbb{R}^{p+1}\neq \emptyset$,  it holds  that $\mathcal {GSM}(\Omega) = \mathcal {GSR}(\Omega_{D})$.
\end{theorem}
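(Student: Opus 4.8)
The whole statement hinges on one elementary identity linking the stem-function decomposition to the slicewise operator $D_{\underline{\omega}}$. If $f=\mathcal{I}(F)$ with $F=F_1+iF_2$, then on the slice through $\underline{\omega}$ we have $f_{\underline{\omega}}(\bx_p+r\underline{\omega})=F_1(\bx')+\underline{\omega}F_2(\bx')$, and since $\underline{\omega}\in\mathbb{R}^q$ commutes with $e_0=1$, anticommutes with $e_1,\dots,e_p$, and satisfies $\underline{\omega}^2=-1$, a direct computation gives
\[
D_{\underline{\omega}}f_{\underline{\omega}}=(D_{\bx_p}F_1-\partial_r F_2)+\underline{\omega}\,(\overline{D}_{\bx_p}F_2+\partial_r F_1).
\]
Hence, if $F$ satisfies the Cauchy--Riemann system \eqref{C-R}, then $D_{\underline{\omega}}f_{\underline{\omega}}=0$ for every $\underline{\omega}\in\mathbb{S}$; together with the $C^1$-regularity of $F$ this proves $\mathcal{GSR}(\Omega_D)\subseteq\mathcal{GSM}(\Omega)$ for \emph{every} p-symmetric domain $\Omega=\Omega_D$. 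This already gives ``$\supseteq$'' in (ii) and the non-strict part of (i), so what remains is the reverse inclusion in (ii) and strictness in (i).

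For the reverse inclusion in (ii), I would first check that $\Omega=\Omega_D$ is a p-symmetric \emph{slice} domain. Because $\Omega_D$ is connected and meets $\mathbb{R}^{p+1}$, the set $D$ must be connected: a connected component of $D$ meeting $\mathbb{R}^{p+1}$ is reflection-invariant, its ``sphere-bundle'' over $\{r\ge0\}$ is open in $\Omega_D$, and the contributions of the other components are open and disjoint from it, so connectedness of $\Omega_D$ leaves only one component. Then each slice $\Omega_{\underline{\omega}}$ is homeomorphic to $D$, hence a domain, and $\Omega$ is a p-symmetric slice domain. Now take $f\in\mathcal{GSM}(\Omega)$. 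The Representation Formula (Theorem~\ref{Representation-Formula-SM}) writes $f(\bx_p+r\underline{\omega})=F_1(\bx_p,r)+\underline{\omega}F_2(\bx_p,r)$ with $F_1,F_2$ independent of $\underline{\omega}$; extending them to $r<0$ via the parities $F_1(\bx_p,-r)=F_1(\bx_p,r)$, $F_2(\bx_p,-r)=-F_2(\bx_p,r)$ (automatic from the explicit formulas in that theorem) produces a stem function $F$, which is $C^1$ (indeed real analytic) because, by the Splitting Lemma~\ref{Splitting-lemma}, the restriction of $f$ to a slice is a combination of monogenic, hence real-analytic, functions. Thus $f=\mathcal{I}(F)\in\mathcal{GS}^{1}(\Omega_D)$, and the displayed identity yields $(D_{\bx_p}F_1-\partial_r F_2)+\underline{\omega}(\overline{D}_{\bx_p}F_2+\partial_r F_1)=0$ for all $\underline{\omega}$; since both brackets are $\underline{\omega}$-independent, comparing two distinct choices of $\underline{\omega}$ and using that the difference of two distinct unit $1$-vectors of $\mathbb{R}^q$ is invertible kills the $\underline{\omega}$-coefficient and leaves exactly \eqref{C-R}. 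Hence $f\in\mathcal{GSR}(\Omega_D)$, which proves (ii).

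For strictness in (i), note first that a p-symmetric domain $\Omega=\Omega_D$ with $\Omega\cap\mathbb{R}^{p+1}=\emptyset$ forces $q\ge2$ (for $q=1$, $\mathbb{S}=\{\pm e_{p+1}\}$ and a connected reflection-invariant open set missing $\mathbb{R}^{p+1}$ is empty). Then I would exhibit the analogue of Example~\ref{example-1}: fix $\underline{\omega}_0\in\mathbb{S}$ with $\Omega\cap\mathrm{H}_{\underline{\omega}_0}\neq\emptyset$ and let $h=\chi_{(\mathrm{H}_{\underline{\omega}_0}\cup\mathrm{H}_{-\underline{\omega}_0})\cap\Omega}$. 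Because $\Omega$ misses $\mathbb{R}^{p+1}$, for $\underline{\omega}\neq\pm\underline{\omega}_0$ the slice $\Omega_{\underline{\omega}}$ meets $\mathrm{H}_{\pm\underline{\omega}_0}$ only inside $\mathbb{R}^{p+1}$, so $h_{\underline{\omega}}\equiv0$, while $h_{\pm\underline{\omega}_0}\equiv1$; each $h_{\underline{\omega}}$ is locally constant, so $h\in\mathcal{GSM}(\Omega)$. But $h\notin\mathcal{GS}(\Omega_D)$: choosing $\bx_p^{0}+r_0\underline{\omega}_0\in\Omega$ with $r_0>0$, p-symmetry gives $\bx_p^{0}+r_0\mathbb{S}\subseteq\Omega$; if $h=\mathcal{I}(F)$ then $F_1(\bx_p^{0},r_0)+\underline{\omega}F_2(\bx_p^{0},r_0)=h(\bx_p^{0}+r_0\underline{\omega})$ would be $1$ at $\underline{\omega}=\underline{\omega}_0$ and $0$ at every $\underline{\omega}\neq\pm\underline{\omega}_0$, and the latter (applied to two distinct such $\underline{\omega}$) forces $F_1(\bx_p^{0},r_0)=F_2(\bx_p^{0},r_0)=0$, a contradiction. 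Since $\mathcal{GSR}(\Omega_D)\subseteq\mathcal{GS}(\Omega_D)$, this $h$ witnesses $\mathcal{GSM}(\Omega)\supsetneqq\mathcal{GSR}(\Omega_D)$.

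The routine ingredients are the Clifford-algebraic identity displayed above and the verification that the test function $h$ has the claimed regularity. The one genuinely delicate point is the topological claim in (ii) that a connected p-symmetric completion meeting $\mathbb{R}^{p+1}$ is a slice domain: this is precisely what makes the Representation Formula available and thereby underlies the equality $\mathcal{GSM}=\mathcal{GSR}$.
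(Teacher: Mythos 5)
Your proof is correct, and it diverges from the paper's in two places, both to its advantage. The inclusion $\mathcal {GSR}(\Omega_{D})\subseteq\mathcal {GSM}(\Omega)$ is the same computation as in the paper. In part (ii), the paper verifies the system \eqref{C-R} by differentiating the explicit formulas for $F_1,F_2$ from Theorem \ref{Representation-Formula-SM}, whereas you read it off from the identity $D_{\underline{\omega}}f_{\underline{\omega}}=(D_{\bx_p}F_1-\partial_r F_2)+\underline{\omega}\,(\overline{D}_{\bx_p}F_2+\partial_r F_1)$ by comparing two values of $\underline{\omega}$ and using the invertibility of their difference; the two arguments are equivalent. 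The substantive addition is your check that a p-symmetric domain meeting $\mathbb{R}^{p+1}$ is automatically a slice domain: the paper inserts this as an unproved hypothesis (``If $\Omega=\Omega_D$ is a slice domain, then \dots''), so your lemma closes a real gap needed before the Representation Formula can be invoked. Your component argument is terse --- the cleanest route is to note that for $q\ge 2$ the projection $\bx_p+r\underline{\omega}\mapsto(\bx_p,r)$ has connected fibres, so the upper half of $D$ is connected, and it meets its reflection on $\{r=0\}$; the case $q=1$ is trivial since then $\Omega_{\underline{\omega}}=\Omega$ --- but the claim is true and your idea is right. In part (i) your witness is different from the paper's and cleaner: the paper uses $f(\bx)=\underline{\omega}$, which equals $\mathcal I(i\,\mathrm{sign}(r))$ and hence is induced by a locally constant stem function that does satisfy \eqref{C-R} whenever $D\cap\{r=0\}=\emptyset$, so it is a delicate (arguably failing) witness for strictness; your characteristic function $h$ is not even a generalized partial-slice function, by the affine-in-$\underline{\omega}$ argument on a single sphere $\bx_p^{0}+r_0\mathbb{S}$, so it lies unambiguously outside $\mathcal{GS}(\Omega_D)\supseteq\mathcal {GSR}(\Omega_{D})$. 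Two small points to tidy: once the two Cauchy--Riemann identities are known for $r\ge 0$ they extend to $r<0$ by the parities of $F_1,F_2$, which is worth a sentence; and your reduction to $q\ge 2$ in (i) uses the (standard, but worth stating) convention that a domain is nonempty.
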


 \begin{proof}
$(i)$  Let $f=\mathcal I(F)\in  \mathcal {GSR}(\Omega_{D})$ with   its stem  function $F\in C^{1}(D)$ satisfying  the generalized Cauchy-Riemann equations
 \begin{eqnarray*}
 \left\{
\begin{array}{ll}
D_{\bx_p}  F_1- \partial_{r} F_2=0,
\\
 \overline{D}_{\bx_p}  F_2+ \partial_{r} F_1=0.
\end{array}
\right.
\end{eqnarray*}
 Then
 \begin{eqnarray*}
 D_{\underline{\omega}}f(\bx) &=& (D_{\bx_p}+\underline{\omega}\partial_{r})(F_1(\bx')+\underline{\omega} F_2(\bx'))
 \\
 &=& D_{\bx_p}  F_1(\bx')- \partial_{r} F_2(\bx')+ \underline{\omega} (\overline{D}_{\bx_p}  F_2(\bx')+ \partial_{r} F_1(\bx'))=0,
\end{eqnarray*}
which means that $f\in\mathcal {GSM}(\Omega)$.

To see that the inclusion is strict, consider the function
$$f(\bx) = \underline{\omega}, \quad \bx \in\Omega=\mathbb{R}^{p+q+1}\setminus \mathbb{R}^{p+1},$$
where $\bx=\bx_p+r\underline{\omega}$ with   $\bx_p\in \mathbb{R}^{p+1}, r>0, \underline{\omega}\in \mathbb{S}$.

It is immediate that $f\in  \mathcal {GSM}(\Omega)$ but $f\notin\mathcal {GSR}(\Omega)$.

$(ii)$  From the proof of   (i), we have the inclusion  $\mathcal {GSM}(\Omega) \supseteq \mathcal {GSR}(\Omega_{D})$ for $\Omega=\Omega_{D}$,  It remains to show   $\mathcal {GSM}(\Omega) \subseteq \mathcal {GSR}(\Omega_{D})$. Let $f\in  \mathcal {GSM}(\Omega)$.  Note that $\Omega_{D}$ is p-symmetric. If $\Omega=\Omega_{D}$ is a slice domain, then the Representation Formula in Theorem \ref{Representation-Formula-SM} holds
$$f(\bx)=f(\bx_p+r \underline{\omega})=  F_1(\bx_p,r)+\underline{\omega}F_2(\bx_p,r),$$
where $F_1, F_2$ are defined as is  Theorem \ref{Representation-Formula-SM}.
In fact,  $ F_1+i F_2$ is a stem function and satisfies  the generalized Cauchy-Riemann equations:
 \begin{eqnarray*}
D_{\bx_p}  F_1 (\bx_p,r)&=& \frac{1}{2} (D_{\bx_p} f(\bx_p+r\underline{\omega} )+D_{\bx_p}f(\bx_p-r\underline{\omega} )  )
\\
 &=&  \frac{1}{2} (- \underline{\omega}\partial_{r}  f(\bx_p+r\underline{\omega}) +\underline{\omega}\partial_{r}  f(\bx_p-r\underline{\omega})  )
 \\
&=& \partial_{r} F_2 (\bx_p,r),
\end{eqnarray*}
similarly, \begin{eqnarray*}
\overline{D}_{\bx_p}  F_2 (\bx_p,r)&=&\frac{1}{2} \underline{\omega}D_{\bx_p}( f(\bx_p-r\underline{\omega} )- f(\bx_p+r\underline{\omega} )  )
\\
 &=&\frac{1}{2} \underline{\omega} (- \underline{\omega}\partial_{r} f(\bx_p-r\underline{\omega} )-  \underline{\omega}\partial_{r}  f(\bx_p+r\underline{\omega})  )
 \\
 &=&\frac{1}{2}  \partial_{r} (   f(\bx_p-r\underline{\omega} )+    f(\bx_p+r\underline{\omega})  )
 \\
&=& \partial_{r} F_1(\bx_p,r).
\end{eqnarray*}
 Hence, $f \in\mathcal {GSR}(\Omega)$, as stated.
\end{proof}

\section{Global differential operators}

The definition of generalized partial-slice monogenic function is based on the validity of an infinite number of generalized Cauchy-Riemann equations, parametrized by $\underline{\omega}\in\mathbb S$. One may wonder if, like in the slice monogenic case, one may find a global operator associated to this notion. The answer is affirmative and is discussed in this section.

Let us define the nonconstant coefficients differential operator
 $$G_{\boldsymbol{x}}f(\bx)=|\underline{\bx}_q|^2\Dxp f(\bx)+ \underline{\bx}_q \mathbb{E}_{\underline{\boldsymbol{x}}_{q}} f(\bx), $$
for $C^{1}$ functions $f :\Omega \rightarrow {\mathbb{R}_{p+q}}$, where $\Omega$ is a domain in $\mathbb{R}^{p+q+1}$.

Note that the global differential operators $G_{\bx}$ was firstly introduced  in 2013, see \cite{Colombo-Cervantes-Sabadini-13}, for  slice monogenic functions that is in the case $p=0$. The operator $G_{\bx}$  is not elliptic and there is a degeneracy in $\underline{\bx}_q=0$.  Its kernel
contains also less smooth functions that have to be interpreted as distributions \cite{Colombo-Sommen}.

Let $\Omega$ be a domain in $\mathbb{R}^{p+q+1}\setminus \mathbb{R}^{p+1}$.  We can define another related  global  nonconstant coefficients differential operator for $C^{1}$ function $f :\Omega \rightarrow \mathbb{R}_{p+q}$:
 $$\overline{\vartheta} f(\boldsymbol{x})=\Dxp f(\bx)+\frac{\underline{\bx}_q}{|\underline{\bx}_q|^2}\mathbb{E}_{\underline{\boldsymbol{x}}_{q}} f(\bx).$$
This operator  for $p=0$ was  firstly introduced  in the case of slice regularity on a real alternative algebra \cite{Ghiloni-Perotti-14}.

By definition,  a direct observation gives following results.

\begin{proposition}\label{relation-operator}
Let $\Omega$ be a domain in $\mathbb{R}^{p+q+1}\setminus \mathbb{R}^{p+1}$. For the $C^{1}$ function $f :\Omega \rightarrow {\mathbb{R}_{p+q}}$, it holds that

(i) {$\overline{\vartheta} f(\bx)=D_{\underline{\omega}}f(\bx),\bx=\bx_{p}+r\underline{\omega}$;}

 (ii) $f\in ker \overline{\vartheta}  \Leftrightarrow  f\in ker G_{\bx} \Leftrightarrow f\in \mathcal {GSM}(\Omega). $
 \end{proposition}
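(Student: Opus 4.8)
The plan is to verify both claims of Proposition \ref{relation-operator} by a direct computation on each slice $\Omega_{\underline{\omega}}$, exploiting the splitting $\underline{\bx}_q = r\underline{\omega}$ with $r = |\underline{\bx}_q| > 0$ and $\underline{\omega}\in\mathbb S$ (recall $\Omega$ avoids $\mathbb R^{p+1}$, so $r$ never vanishes). The key elementary fact I would record first is how the Euler operator $\E_{\underline{\bx}_q}=\sum_{i=p+1}^{p+q}x_i\partial_{x_i}$ acts on a function restricted to a slice: since on $\Omega_{\underline{\omega}}$ the only relevant radial variable is $r$, one has $\E_{\underline{\bx}_q}f = r\,\partial_r f_{\underline{\omega}}$ when evaluated at $\bx=\bx_p+r\underline{\omega}$. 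This is the standard "radial part of the Euler operator" identity and is the technical heart of part (i).

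For part (i), I would substitute $\underline{\bx}_q=r\underline{\omega}$ and $\E_{\underline{\bx}_q}f = r\,\partial_r f_{\underline{\omega}}$ into the definition of $\overline{\vartheta}$:
\begin{align*}
\overline{\vartheta}f(\bx) = D_{\bx_p}f(\bx) + \frac{r\underline{\omega}}{r^2}\cdot r\,\partial_r f_{\underline{\omega}}(\bx_p+r\underline{\omega}) = \bigl(D_{\bx_p}+\underline{\omega}\partial_r\bigr)f_{\underline{\omega}}(\bx_p+r\underline{\omega}) = D_{\underline{\omega}}f(\bx),
\end{align*}
which is exactly the operator appearing in Definition \ref{definition-slice-monogenic}. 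An entirely parallel computation for $G_{\bx}$ gives $G_{\bx}f(\bx) = r^2 D_{\bx_p}f + r\underline{\omega}\cdot r\,\partial_r f_{\underline{\omega}} = r^2(D_{\bx_p}+\underline{\omega}\partial_r)f_{\underline{\omega}} = r^2\,D_{\underline{\omega}}f(\bx)$, so on $\mathbb R^{p+q+1}\setminus\mathbb R^{p+1}$ we have $G_{\bx}f = |\underline{\bx}_q|^2\,\overline{\vartheta}f$, the two operators differing only by the nowhere-zero scalar factor $|\underline{\bx}_q|^2$.

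Part (ii) then follows immediately: since $r^2>0$ on $\Omega$, the equations $\overline{\vartheta}f=0$, $G_{\bx}f=0$, and $D_{\underline{\omega}}f_{\underline{\omega}}=0$ for all $\underline{\omega}\in\mathbb S$ are pairwise equivalent, and the last is the definition of $f\in\mathcal{GSM}(\Omega)$. I would also note that a $C^1$ function $f$ on $\Omega$ automatically has the restrictions $f_{\underline{\omega}}$ with continuous partial derivatives on each $\Omega_{\underline{\omega}}$, so the regularity hypothesis in Definition \ref{definition-slice-monogenic} is met. I expect the only subtle point—hence the main obstacle—to be a careful justification of the identity $\E_{\underline{\bx}_q}f\big|_{\bx_p+r\underline{\omega}} = r\,\partial_r f_{\underline{\omega}}(\bx_p+r\underline{\omega})$, i.e. that differentiating $f$ in the ambient $x_{p+1},\dots,x_{p+q}$ directions and then restricting agrees with the radial derivative along the slice; this is a chain-rule argument using that $\underline{\bx}_q\mapsto(r,\underline{\omega})$ is smooth away from the origin and that the slice $\Omega_{\underline{\omega}}$ is parametrized by $(\bx_p,r)$. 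Everything else is a routine substitution.
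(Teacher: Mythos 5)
Your proposal is correct and matches the paper's intent: the paper gives no written proof, stating only that the result follows ``by definition, a direct observation,'' and your computation—reducing $\E_{\underline{\bx}_q}$ to $r\,\partial_r$ along each slice, so that $\overline{\vartheta}f=D_{\underline{\omega}}f$ and $G_{\bx}f=|\underline{\bx}_q|^2\,\overline{\vartheta}f$ with $|\underline{\bx}_q|^2>0$ on $\Omega$—is exactly that observation made explicit. No gaps.
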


To present  some  relations between the nonconstant coefficients differential operator  $\overline{\vartheta}$ with the generalized Cauchy-Riemann operator $D_{\bx}$, we need to introduce some more terminology. Below $\Omega_D$ is as in Section \ref{Sec4}.

\begin{definition}
Let $f\in   {\mathcal{GS}}(\Omega_{D}) $.
The function $f^{\circ}_{s}: \Omega_{D} \longrightarrow \mathbb{R}_{p+q}$, called spherical value of $f$, and the   function $f^{'}_{s}: \Omega_{D}\setminus \mathbb{R}^{p+1} \longrightarrow \mathbb{R}_{p+q}$  called spherical derivative of $f$, are defined as, respectively,
$$f^{\circ}_{s}(\bx)=\frac{1}{2}\big( f(\bx)+f(\bx_{\diamond}) \big), $$
$$f^{'}_{s}(\bx)=\frac{1}{2}\underline{\bx}_{q}^{-1}\big( f(\bx)-f(\bx_{\diamond}) \big).$$
\end{definition}

By definition, one can get easily two formulas that
 $f^{\circ}_{s}(\bx)=F_{1}(\bx')$ and $f^{'}_{s}(\bx)=r^{-1}F_{2}(\bx')$,  being constant on every set $\bx_{p}+r\mathbb{S}$.
Hence, $f^{\circ}_{s}$ and $f^{'}_{s}$ are  generalized partial-slice  functions. Furthermore, it holds that

\begin{equation}\label{spherical-value-derivative}
 f(\bx)=f^{\circ}_{s}(\bx)+ \underline{\bx}_{q} f_{s}^{'}(\bx), \quad \bx \in \Omega_{D}\setminus \mathbb{R}^{p+1}.
\end{equation}


For any $i,j$ with  $p+1\leq i<j\leq p+q$, the spherical Dirac operator on $\mathbb{R}^{q}$ is given by
$$\Gamma=-\sum_{i<j} e_{i}e_{j}L_{ij},$$
where
$ L_{ij}=x_{i} \frac{\partial}{\partial x_{j}}-x_{j} \frac{\partial}{\partial x_{i}}$ {are} the angular momentum operators.

With   spherical Dirac operator,   Dirac operator $D_{\underline{\bx}_q}$ has a decomposition (see e.g., \cite[Section 0.1.4]{Delanghe-Sommen-Soucek})
$$D_{\underline{\bx}_q}=\underline{\omega}\partial_{r}+\frac{1}{r}\underline{\omega}\Gamma,\quad \underline{\bx}_q=r\underline{\omega}, \partial_{r}=\frac{1}{r}\E_{\underline{\bx}_q}.$$
Hence,
 \begin{equation}\label{Dirac-operator-decomp}
 D_{\bx}=D_{\bx_p}+\underline{\omega}\partial_{r}+\frac{1}{r}\underline{\omega}\Gamma,\end{equation}
where $\bx=\bx_p+\underline{\bx}_q=\bx_p+r\underline{\omega}.$

 With the decomposition of generalized partial-slice functions (\ref{spherical-value-derivative}), we have
\begin{equation}\label{spherical-Dirac-operator}
\Gamma f(\bx)= (\Gamma \underline{\bx}_{q}) f_{s}^{'}(\bx)=(q-1)\underline{\bx}_{q}f_{s}^{'}(\bx), \quad \bx \in \Omega_{D}\setminus \mathbb{R}^{p+1}.
\end{equation}

From   (\ref{Dirac-operator-decomp}) and (\ref{spherical-Dirac-operator}), it follows that
$$(D_{\bx}-{D_{\underline{\omega}}} )f(\bx) =\frac{1}{r}\underline{\omega}\Gamma f(\bx)=(1-q)f_{s}^{'}(\bx). $$

Summarizing the discussion above,  we obtain  the following property. See \cite[Proposition 3.3.2]{Perotti-19} for the case of slice regular functions.
\begin{proposition}\label{relation}
If $f\in {\mathcal{GS}}^{1}(\Omega_{D}) $, then the following two formulas hold on $\Omega_{D}\setminus \mathbb{R}^{p+1}$:

(i) $\Gamma f(\bx)=(q-1)\underline{\bx}_{q}f_{s}^{'}(\bx)$;

 (ii) $(D_{\bx}-{D_{\underline{\omega}})}f(\bx) =(D_{\bx}- \overline{\vartheta} )f(\bx)=(1-q)f_{s}^{'}(\bx). $
 \end{proposition}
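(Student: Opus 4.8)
The plan is to combine the canonical decomposition of a generalized partial-slice function into spherical value and spherical derivative with the radial/spherical splitting of the Dirac operator on $\mathbb R^q$. First I would fix $\bx=\bx_p+r\underline{\omega}\in\Omega_D\setminus\mathbb R^{p+1}$ and recall from \eqref{spherical-value-derivative} that $f(\bx)=f^{\circ}_{s}(\bx)+\underline{\bx}_q f^{'}_{s}(\bx)$, where $f^{\circ}_{s}=F_1(\bx')$ and $f^{'}_{s}=r^{-1}F_2(\bx')$ are constant on each sphere $\bx_p+r\mathbb S$. Since each angular momentum operator $L_{ij}=x_i\partial_{x_j}-x_j\partial_{x_i}$ with $p+1\leq i<j\leq p+q$ generates a rotation inside $r\mathbb S$ (it fixes $\bx_p$ and $r$), it annihilates both $f^{\circ}_{s}$ and $f^{'}_{s}$; and because the $L_{ij}$ are scalar first-order operators, the Leibniz rule gives $L_{ij}(\underline{\bx}_q f^{'}_{s})=(L_{ij}\underline{\bx}_q)f^{'}_{s}$. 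Hence, applying $\Gamma=-\sum_{i<j}e_ie_jL_{ij}$ termwise, $\Gamma f(\bx)=(\Gamma\underline{\bx}_q)f^{'}_{s}(\bx)$, the Clifford constants $e_ie_j$ playing no role beyond multiplication.

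For part (i) it then remains to evaluate $\Gamma\underline{\bx}_q$. I would use the decomposition $D_{\underline{\bx}_q}=\underline{\omega}\partial_r+\tfrac1r\underline{\omega}\Gamma$ applied to the vector variable $\underline{\bx}_q$: one has $D_{\underline{\bx}_q}\underline{\bx}_q=-q$ and $\underline{\omega}\partial_r\underline{\bx}_q=\underline{\omega}^2=-1$, so $\tfrac1r\underline{\omega}\Gamma\underline{\bx}_q=-(q-1)$, and multiplying on the left by $-r\underline{\omega}$ (using $\underline{\omega}^{-1}=-\underline{\omega}$) yields $\Gamma\underline{\bx}_q=(q-1)\underline{\bx}_q$. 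Combined with the previous paragraph this gives $\Gamma f(\bx)=(q-1)\underline{\bx}_q f^{'}_{s}(\bx)$, which is exactly (i).

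For part (ii) I would first invoke Proposition \ref{relation-operator}(i) to identify $\overline{\vartheta}f=D_{\underline{\omega}}f$ on $\Omega_D\setminus\mathbb R^{p+1}$, so that the two differences in the statement coincide. Then, from \eqref{Dirac-operator-decomp}, $D_{\bx}-D_{\underline{\omega}}=D_{\bx}-(D_{\bx_p}+\underline{\omega}\partial_r)=\tfrac1r\underline{\omega}\Gamma$; applying this to $f$ and using (i), $(D_{\bx}-D_{\underline{\omega}})f(\bx)=\tfrac1r\underline{\omega}(q-1)\underline{\bx}_q f^{'}_{s}(\bx)=(q-1)\underline{\omega}^2 f^{'}_{s}(\bx)=(1-q)f^{'}_{s}(\bx)$, as claimed.

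The only genuinely delicate point is the bookkeeping in the first step: one must be certain that $\Gamma$ acts solely on the explicit factor $\underline{\bx}_q$, i.e.\ that $f^{\circ}_{s}$ and $f^{'}_{s}$ are honest functions of $(\bx_p,r)$ alone — which is precisely the content of the sentence following \eqref{spherical-value-derivative} — and that the non-commuting coefficients $e_ie_j$ in the definition of $\Gamma$ cause no trouble, being constants while the $L_{ij}$ are scalar derivations. Everything after that is a two-line computation with the radial/spherical splitting of the Dirac operator.
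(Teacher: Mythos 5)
Your proof is correct and follows essentially the same route as the paper: the decomposition $f=f^{\circ}_{s}+\underline{\bx}_q f^{'}_{s}$ with $\Gamma$ annihilating the sphere-constant factors gives \eqref{spherical-Dirac-operator}, and combining with \eqref{Dirac-operator-decomp} and Proposition \ref{relation-operator}(i) yields (ii). You merely supply a few details the paper leaves implicit, such as the verification that $\Gamma\underline{\bx}_q=(q-1)\underline{\bx}_q$.
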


   When the stem  function $F\in C^{1}(D)$, $f_{s}^{'}(\bx)$ could extend continuously to  $\Omega_{D}\cap \mathbb{R}^{p+1}$ with   values  being $\partial_{r}F_{2}(\bx_{p},0)$.  At this point, (\ref{spherical-value-derivative}) holds naturally in the whole domain  $\Omega_{D}$. Therefore, based on Proposition \ref{relation},  we  redefine the nonconstant coefficients global differential operator $\overline{\vartheta}:\ {\mathcal{GS}}^{1}(\Omega_{D})\cap C^{1}(\Omega_{D})\rightarrow {\mathcal{GS}}^{0}(\Omega_{D})$ as
  \begin{eqnarray*}
 \overline{\vartheta} f(\bx)=\left\{
\begin{array}{ll}
\Dxp f(\bx)+\frac{\underline{\bx}_q}{|\underline{\bx}_q|^2}\mathbb{E}_{\underline{\boldsymbol{x}}_{q}}f(\bx), \quad  \bx \in \Omega_{D}\setminus \mathbb{R}^{p+1},
\\
  D_{\bx}f(\bx)+(q-1)f_{s}^{'}(\bx), \quad  \bx \in \Omega_{D} \cap \mathbb{R}^{p+1}.
\end{array}
\right.
\end{eqnarray*}

Hence, we obtain   relationships from Theorem  \ref{relation-GSR-GSM} and Proposition  \ref{relation-operator}.
\begin{theorem}\label{relationship-g}

(i) For the p-symmetric domain $\Omega$ with $\Omega  \cap \mathbb{R}^{p+1}= \emptyset$ and $f\in C^{1}(\Omega)$, it holds that
   $$f\in ker \overline{\vartheta}  \Leftrightarrow  f\in ker G_{\bx} \Leftrightarrow f\in \mathcal {GSM}(\Omega). $$

(ii)For the p-symmetric domain $\Omega=\Omega_{D}$ with $\Omega  \cap \mathbb{R}^{p+1}\neq \emptyset$ and $f\in C^{1}(\Omega)\cap \mathcal{GS}^{1}(\Omega_{D})$,  it holds  that
   $$f\in ker \overline{\vartheta}  \Leftrightarrow  f\in \mathcal {GSM}(\Omega) \Leftrightarrow f\in \mathcal {GSR}(\Omega_{D}). $$

\end{theorem}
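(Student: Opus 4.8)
The plan is to derive both items by assembling results already established: Theorem~\ref{relation-GSR-GSM}, Proposition~\ref{relation-operator} and Proposition~\ref{relation}, distinguishing according to whether the domain meets the real slice $\mathbb R^{p+1}$. No genuinely new computation is needed; the content is bookkeeping, plus one continuity argument near $\mathbb R^{p+1}$.

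For item (i), note that $\Omega\cap\mathbb R^{p+1}=\emptyset$ forces $\Omega\subseteq\mathbb R^{p+q+1}\setminus\mathbb R^{p+1}$, so $G_{\bx}$ and $\overline{\vartheta}$ are the original operators and no degeneracy occurs. The full chain $f\in\ker\overline{\vartheta}\Leftrightarrow f\in\ker G_{\bx}\Leftrightarrow f\in\mathcal{GSM}(\Omega)$ is then precisely Proposition~\ref{relation-operator}(ii); p-symmetry is not even used here, so (i) is immediate.

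For item (ii), where $\Omega=\Omega_D$ is p-symmetric, meets $\mathbb R^{p+1}$, and $f\in C^{1}(\Omega)\cap\mathcal{GS}^{1}(\Omega_D)$, I would prove the two equivalences separately. First, $f\in\mathcal{GSM}(\Omega)\Leftrightarrow f\in\mathcal{GSR}(\Omega_D)$: writing $f=\mathcal I(F)$ with the $C^{1}$ stem function $F=F_1+iF_2$, one has $D_{\underline{\omega}}f=(D_{\bx_p}F_1-\partial_rF_2)+\underline{\omega}(\overline{D}_{\bx_p}F_2+\partial_rF_1)$; if $F$ satisfies \eqref{C-R} this vanishes for every $\underline{\omega}$, giving $f\in\mathcal{GSM}(\Omega)$, and conversely if $f\in\mathcal{GSM}(\Omega)$ the displayed expression is zero for all $\underline{\omega}\in\mathbb S$, so subtracting the identities for two distinct $\underline{\omega}$'s (which differ by an invertible paravector) yields $\overline{D}_{\bx_p}F_2+\partial_rF_1=0$ and hence $D_{\bx_p}F_1-\partial_rF_2=0$, i.e.\ $f\in\mathcal{GSR}(\Omega_D)$; this is exactly Theorem~\ref{relation-GSR-GSM}(ii). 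Second, $f\in\ker\overline{\vartheta}\Leftrightarrow f\in\mathcal{GSM}(\Omega)$: on $\Omega_D\setminus\mathbb R^{p+1}$ Proposition~\ref{relation-operator}(i) gives $\overline{\vartheta}f(\bx)=D_{\underline{\omega}}f(\bx)$ for $\bx=\bx_p+r\underline{\omega}$, which by Proposition~\ref{relation}(ii) equals $D_{\bx}f+(q-1)f'_s$ there; since $F\in C^{1}(D)$ the spherical derivative $f'_s$ extends continuously across $\Omega_D\cap\mathbb R^{p+1}$ with value $\partial_rF_2(\bx_p,0)$, so the (re)definition of $\overline{\vartheta}$ on $\Omega_D\cap\mathbb R^{p+1}$ is precisely the continuous extension of $\overline{\vartheta}f$ from the outside. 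Because $f\in C^{1}(\Omega)$, both $\overline{\vartheta}f$ and each $D_{\underline{\omega}}f_{\underline{\omega}}$ are continuous and $\mathbb R^{p+1}$ is a proper subspace, so $\overline{\vartheta}f\equiv0$ on $\Omega_D$ iff $\overline{\vartheta}f\equiv0$ on $\Omega_D\setminus\mathbb R^{p+1}$, iff $D_{\underline{\omega}}f_{\underline{\omega}}\equiv0$ on $\Omega_{\underline{\omega}}\setminus\mathbb R^{p+1}$ for every $\underline{\omega}\in\mathbb S$, iff $f\in\mathcal{GSM}(\Omega)$; combining the two equivalences closes the circle.

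The computational ingredients — the commutation identities behind Proposition~\ref{relation-operator}, the spherical decomposition $D_{\underline{\bx}_q}=\underline{\omega}\partial_r+\frac{1}{r}\underline{\omega}\Gamma$, and $\Gamma\underline{\bx}_q=(q-1)\underline{\bx}_q$ — are already available from the preceding discussion. The only delicate point, and the one I expect to be the (minor) main obstacle, is the behaviour on the real slice $\mathbb R^{p+1}$ in (ii): one must verify that the continuous extension used to define $\overline{\vartheta}$ there is compatible with the pointwise condition $D_{\underline{\omega}}f_{\underline{\omega}}=0$ on $\mathbb R^{p+1}$ entering the definition of $\mathcal{GSM}(\Omega)$, which is exactly why (ii) requires $f\in C^{1}(\Omega)\cap\mathcal{GS}^{1}(\Omega_D)$ and not merely $f\in C^{1}(\Omega)$; it is also why $\ker G_{\bx}$ disappears from (ii), since $G_{\bx}f$ is forced to vanish on $\mathbb R^{p+1}$ by the degeneracy at $\underline{\bx}_q=0$, so $\ker G_{\bx}$ would be strictly larger there.
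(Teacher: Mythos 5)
Your proposal is correct and follows essentially the same route as the paper, which derives the theorem directly from Theorem \ref{relation-GSR-GSM} and Propositions \ref{relation-operator} and \ref{relation}, together with the continuous extension of $\overline{\vartheta}$ across $\Omega_D\cap\mathbb{R}^{p+1}$. Your added detail on the density/continuity argument near the real slice, and the direct two-$\underline{\omega}$ computation for $\mathcal{GSM}\Leftrightarrow\mathcal{GSR}$ under the stem-function hypothesis, merely make explicit what the paper leaves to the cited results.
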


\section{Conformal invariance}

To offer a rather complete discussion of the topic of generalized partial-slice monogenic functions, in this section we shall discuss their conformal invariance. We first recall a classical result for monogenic functions, see \cite{Ryan}.
\begin{theorem}\label{Ryan}
Let $M =\begin{pmatrix} a \ b \\ c\ d\end{pmatrix} \in  GAV(\mathbb{R}\oplus \mathbb{R}^{n})$  and  $f$ be a left monogenic function in a domain $\Omega\subseteq \mathbb{R}^{n+1} $. Then   the function
\begin{eqnarray*}\label{inter}
 \frac{\overline{cx+d}}{|cx+d|^{n+1}}f((ax+b)(cx+d)^{-1})
\end{eqnarray*}
is also left monogenic in the variable $x\in M^{-1}(\Omega)$.
\end{theorem}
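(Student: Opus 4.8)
The plan is to use the fact recalled above that $GAV(\mathbb{R}\oplus\mathbb{R}^{n})$ is generated by translations, the inversion, rotations and dilations, and to reduce the claim to a verification on each of these four families. To this end I would introduce the \emph{automorphy factor}
$$
J_{M}(x)=\frac{\overline{cx+d}}{|cx+d|^{n+1}},\qquad M=\begin{pmatrix}a & b\\ c & d\end{pmatrix}\in GAV(\mathbb{R}\oplus\mathbb{R}^{n}),
$$
well defined wherever $cx+d\neq 0$; note that for $x\in M^{-1}(\Omega)$ this condition holds automatically, the set $\{cx+d=0\}$ being the preimage of $\infty$. From the factorization $cx+d=(c_{1}M_{2}\langle x\rangle+d_{1})(c_{2}x+d_{2})$ for a product $M=M_{1}M_{2}$, together with the multiplicativity of $|\cdot|$ on the Clifford group, one obtains a cocycle-type relation for $J_{M}$ under composition; hence the operator $\mathcal{T}_{M}f:=J_{M}\cdot\big(f\circ M\langle\cdot\rangle\big)$ is compatible with composition of M\"obius maps, and it suffices to show that $\mathcal{T}_{M}$ sends left monogenic functions to left monogenic functions when $M$ is one of the four generators.

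Three of the four cases are immediate. For a translation $M\langle x\rangle=x+b$ one has $J_{M}\equiv 1$ and $f(x+b)$ is left monogenic since $D=\sum_{i=0}^{n}e_{i}\partial_{x_{i}}$ is translation invariant. For a dilation $M\langle x\rangle=\lambda^{2}x$ the factor $J_{M}$ is a nonzero real constant and $D\big(f(\lambda^{2}x)\big)=\lambda^{2}(Df)(\lambda^{2}x)=0$. For a rotation $M\langle x\rangle=ax\bar a$ with $|a|=1$ one has $J_{M}\equiv\bar a$, and the left monogenicity of $\bar a\,f(ax\bar a)$ is the classical covariance of $D$ under the orthogonal substitution $x\mapsto ax\bar a$; it follows by pushing the chain rule through the substitution and using $a\bar a=1$, the only care needed being the order of the Clifford factors.

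The essential case, and the one I expect to be the main obstacle, is the inversion $M=\begin{pmatrix}0 & 1\\ -1 & 0\end{pmatrix}$, $M\langle x\rangle=-x^{-1}$, for which $J_{M}(x)=-\bar x\,|x|^{-(n+1)}$ is, up to a positive real constant, the Cauchy kernel $E(x)=\sigma_{n}^{-1}\bar x\,|x|^{-(n+1)}$ of $D$ on $\mathbb{R}^{n+1}$, which is simultaneously left and right monogenic away from the origin. One has to prove that $E(x)\,f(-x^{-1})$ is left monogenic. Applying $D_{x}$ and splitting by the Leibniz rule, the contribution in which the derivatives hit $E$ collapses to $\big(\sum_{i}e_{i}\partial_{x_{i}}E(x)\big)\,f(-x^{-1})=(DE)(x)\,f(-x^{-1})=0$ by left monogenicity of $E$; the remaining contribution $\sum_{i=0}^{n}e_{i}\,E(x)\,\partial_{x_{i}}\big(f(-x^{-1})\big)$ is where the computation is concentrated. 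The plan is to use $\partial_{x_{i}}(x^{-1})=-x^{-1}e_{i}x^{-1}$ (differentiate $x\,x^{-1}=1$) and the chain rule to rewrite each $\partial_{x_{i}}\big(f(-x^{-1})\big)$ in terms of the partial derivatives of $f$ evaluated at $-x^{-1}$, and then to collapse the resulting double sum by means of the Clifford contraction identity $\sum_{i=0}^{n}e_{i}\,v\,e_{i}=-(n-1)\bar v$, valid for every paravector $v$; this turns $\sum_{i}e_{i}E(x)\,\partial_{x_{i}}\big(f(-x^{-1})\big)$ into a scalar multiple of $|x|^{-(n+1)}(Df)(-x^{-1})$, which vanishes. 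This is precisely the classical statement that the Kelvin transform of Clifford analysis preserves monogenicity. Combining the four generators with the cocycle relation then gives the conclusion for every $M\in GAV(\mathbb{R}\oplus\mathbb{R}^{n})$ on the domain $M^{-1}(\Omega)$; throughout, the only genuinely delicate bookkeeping — signs and the order of Clifford factors — is confined to the inversion step.
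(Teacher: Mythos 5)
The paper does not actually prove this statement: Theorem \ref{Ryan} is quoted as a classical result and attributed to the reference \cite{Ryan}, so there is no in-house argument to compare yours against. Your outline is the standard proof of that classical fact, and its architecture is sound; indeed, the reduction to the four generators of $GAV(\mathbb{R}\oplus\mathbb{R}^{n})$ via the cocycle identity $J_{M_1M_2}(x)=J_{M_2}(x)\,J_{M_1}(M_2\langle x\rangle)$, coming from $cx+d=(c_1M_2\langle x\rangle+d_1)(c_2x+d_2)$ and the multiplicativity of the norm on the Clifford group, is exactly the strategy the authors themselves use for the generalized version in Theorem \ref{invariance}, where translations, rotations and dilations are likewise immediate and the inversion is delegated to the present theorem as a black box. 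Your treatment of the translation, dilation and rotation generators is correct (for the rotation one uses $\sum_i R_{ji}e_i=\overline{a}e_ja$ and $a\overline a=1$ to pull $\overline a$ out of $D$).

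The one point where your sketch, read literally, would not close is the final collapse in the inversion step. After the Leibniz and chain rules you arrive at $\sum_{i,j} e_i\,\overline{x}\,|x|^{-(n+1)}\,[x^{-1}e_ix^{-1}]_j\,(\partial_{u_j}f)(-x^{-1})$, and the inner sum over $i$ is \emph{not} of the form $\sum_i e_i v e_i$: each $e_i$ occurs once as a free Clifford factor and once inside the component extraction $[\,\cdot\,]_j$, so the contraction identity $\sum_i e_i v e_i=-(n-1)\overline{v}$ does not apply directly. What actually collapses the double sum is the adjoint identity $\sum_i [\overline{x}e_i\overline{x}]_j\,e_i=xe_jx$, i.e. the observation that $h\mapsto \overline{x}h\overline{x}$ preserves paravectors and equals $|x|^2$ times an orthogonal map whose transpose is $h\mapsto xhx/|x|^{2}$. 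With it one gets
$$\sum_i e_i\,\frac{\overline{x}}{|x|^{n+1}}\,\partial_{x_i}\bigl(f(-x^{-1})\bigr)=\frac{x}{|x|^{n+3}}\,(Df)(-x^{-1})=0,$$
which finishes the inversion case. This is a repair of a detail rather than of the idea: your identification of this step with the Kelvin transform of Clifford analysis is correct, and the composition argument then yields the theorem for all of $GAV(\mathbb{R}\oplus\mathbb{R}^{n})$.
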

In the context of slice monogenic functions, one has to restrict the class of the M\"obius transformations preserving the axial symmetry, see \cite{Colombo-Krau-Sabadini-20}. Also in the case of generalized partial-slice monogenic functions,  we need to define a subgroup of M\"obius group  preserving the p-symmetry, following \cite{Colombo-Krau-Sabadini-20}. We consider:
\begin{eqnarray*}\label{GRAV}
GRAV( \mathbb{R}\oplus \mathbb{R}^{p+q})=
\left\langle
\bpm
1& b\\0&1
\epm,
\bpm
a&0\\ 0&a^{-1}
\epm,
\bpm
0&1\\-1&0
\epm,
\bpm
\lambda&0\\0&\lambda^{-1}
\epm
\right\rangle,
\end{eqnarray*}
where $b\in \mathbb{R}^{p+1},\ a\in \mathbb{S}$ and $\lambda\in \mathbb{R}\backslash\{0\}$.

When $p=0,q=n$, the group $GRAV( \mathbb{R}\oplus \mathbb{R}^{n})$ is that one in \cite[Definition 2.12]{Colombo-Krau-Sabadini-20}.  In our more general setting, we now prove that the group $GRAV( \mathbb{R}\oplus \mathbb{R}^{p+q})$  preserves  the property of sliceness and p-symmetry.
\begin{proposition}\label{symmetry}
 The elements in the group $GRAV( \mathbb{R}\oplus \mathbb{R}^{p+q})$ take  p-symmetric
slice domains into p-symmetric slice domains.
\end{proposition}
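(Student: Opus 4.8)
The plan is to reduce everything to the four generators of $GRAV(\mathbb{R}\oplus\mathbb{R}^{p+q})$ displayed in \eqref{GRAV}. The set of $M\in GRAV(\mathbb{R}\oplus\mathbb{R}^{p+q})$ that send $p$-symmetric slice domains to $p$-symmetric slice domains is a submonoid, and it contains the four generators together with their inverses --- the inverse of a translation by $b\in\R^{p+1}$ is the translation by $-b$; the inverse of the dilation $\bx\mapsto\lambda^2\bx$ is the dilation $\bx\mapsto\lambda^{-2}\bx$; the inversion $\bx\mapsto-\bx^{-1}$ is an involution; and $\bpm a&0\\0&a^{-1}\epm^{-1}=\bpm a^{-1}&0\\0&a\epm$ is the ``rotation'' attached to $a^{-1}=-a\in\mathbb{S}$ --- so it equals the whole group as soon as the four generators have the property. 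Throughout I read $M\langle\Omega\rangle$ as $M\langle\Omega\cap\mathrm{dom}\,M\rangle$; the only generator with a finite pole is the inversion, whose pole is $0\in\R^{p+1}$, and since $\{0\}=[0]$ is a single sphere and $p+q+1\geq2$ (so that $\Omega\setminus\{0\}$ stays connected), removing it from a $p$-symmetric slice domain leaves a $p$-symmetric slice domain; this settles the bookkeeping around the point at infinity once and for all.

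For $p$-symmetry I would compute how each generator $M$ acts on a point $\bx=\bx_p+r\underline{\omega}$ with $\bx_p\in\R^{p+1}$, $r\geq0$, $\underline{\omega}\in\mathbb{S}$: a translation gives $\bx_p+b+r\underline{\omega}$; a dilation gives $\lambda^2\bx_p+(\lambda^2 r)\underline{\omega}$; using $\overline{\bx}=\overline{\bx_p}-\underline{\bx}_q$ and $|\bx|^2=|\bx_p|^2+r^2$, the inversion gives $-\overline{\bx_p}\,(|\bx_p|^2+r^2)^{-1}+r\,(|\bx_p|^2+r^2)^{-1}\underline{\omega}$; and the rotation gives $a\bx_p a+r\,(a\underline{\omega}a)$ (recall $a^{-1}=-a$ for $a\in\mathbb{S}$). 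In the first three cases the $\R^{p+1}$-part of $M\langle\bx\rangle$ and the modulus of its $\R^q$-part depend only on $(\bx_p,r)$ while the unit vector $\underline{\omega}$ is unchanged; in the fourth, a short computation using $a^2=-1$ shows that $\bx_p\mapsto a\bx_p a$ maps $\R^{p+1}$ onto $\R^{p+1}$ (each $e_i$ with $1\le i\le p$ anticommutes with $a\in\R^q$, so $ae_i a=e_i$, while $a\cdot1\cdot a=-1$), that $\underline{\omega}\mapsto a\underline{\omega}a$ is a bijection of $\mathbb{S}$, and that $r=|\underline{\bx}_q|$ is preserved. In every case $M\langle[\bx]\rangle$ is again a full sphere $[\by]$, hence $M\langle\Omega\rangle=\bigcup_{[\bx]\subseteq\Omega}M\langle[\bx]\rangle$ is a union of spheres, i.e.\ $p$-symmetric. (Conceptually, $p$-symmetry of $\Omega$ is invariance under $\bx\mapsto s\bx\overline{s}$ with $s$ in the Clifford group of $\R^q$ and $|s|=1$; translations by $\R^{p+1}$, real dilations and the inversion commute with this action, and ``rotations'' normalize it.)

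For the slice property I would observe that each generator carries the slice plane $\R^{p+1}\oplus\underline{\omega}\R\cong\R^{p+2}$ bijectively onto a slice plane $\R^{p+1}\oplus\underline{\omega}'\R$: for translations by $\R^{p+1}$, real dilations and the inversion (using that $\overline{\bx_p+t\underline{\omega}}=\overline{\bx_p}-t\underline{\omega}$ again lies in $\R^{p+1}\oplus\underline{\omega}\R$) one has $\underline{\omega}'=\underline{\omega}$, while for a rotation $\underline{\omega}'=a\underline{\omega}a\in\mathbb{S}$ and $\underline{\omega}\mapsto\underline{\omega}'$ is onto $\mathbb{S}$. On such a plane $M$ restricts to an affine map, a dilation, the $(p+2)$-dimensional inversion, or an orthogonal linear map --- in every case a homeomorphism of $\R^{p+2}$, or of $\R^{p+2}\setminus\{0\}$ for the inversion. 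Since $M$ permutes the complements of these planes as well, $\bigl(M\langle\Omega\rangle\bigr)_{\underline{\omega}'}=M\langle\Omega_{\underline{\omega}}\rangle$ (intersected with $\R^{p+2}\setminus\{0\}$ for the inversion), the homeomorphic image of the domain $\Omega_{\underline{\omega}}$, hence again a domain in $\R^{p+2}$; and as $\underline{\omega}$ runs over $\mathbb{S}$ so does $\underline{\omega}'$. Finally each generator maps $\R^{p+1}$ onto $\R^{p+1}$ and its complement onto its complement, so $M\langle\Omega\rangle\cap\R^{p+1}=M\langle\Omega\cap\R^{p+1}\rangle\neq\emptyset$ (for the inversion one uses that $\Omega\cap\R^{p+1}$, being open in $\R^{p+1}$, contains points other than $0$). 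This shows $M\langle\Omega\rangle$ is a slice domain.

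I expect the only genuinely computational part to be the rotation generator: verifying that $a\bx_p a\in\R^{p+1}$, that conjugation by $a\in\mathbb{S}$ restricts to an isometry --- hence a bijection --- of $\mathbb{S}$, and that it preserves $r$; all of this reduces to $a^2=-1$ and to $\R^p$ anticommuting with $\R^q$ inside $\R_{p+q}$. The only other point requiring care is the bookkeeping around the pole of the inversion and $\infty$, handled by the convention $M\langle\Omega\rangle:=M\langle\Omega\cap\mathrm{dom}\,M\rangle$ together with the remark that excising $[0]=\{0\}$ preserves both $p$-symmetry and the slice property.
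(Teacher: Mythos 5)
Your proposal is correct and follows essentially the same route as the paper: a generator-by-generator verification, computing the image of a generic point $\bx_p+r\underline{\omega}$ under each of the four types and checking that the $\mathbb{R}^{p+1}$-part and the radius depend only on $(\bx_p,r)$ while $\underline{\omega}$ is either fixed or moved by the bijection $\underline{\omega}\mapsto a\underline{\omega}a$ of $\mathbb{S}$. Your treatment is in fact a bit more careful than the paper's on three points it glosses over --- the reduction from arbitrary group elements to generators via the submonoid remark, the excision of the pole $[0]=\{0\}$ for the inversion, and the explicit verification that the slices of the image remain domains and still meet $\mathbb{R}^{p+1}$.
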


 \begin{proof}
Let $\Omega$ be a p-symmetric slice domain, $\bx=\bx_p+r\underline{\omega}\in \Omega$ with   $\bx_p\in \mathbb{R}^{p+1}, r\geq0, \underline{\omega}\in \mathbb{S}$. Let $M$ be one of the generators of the group $GRAV( \mathbb{R}\oplus \mathbb{R}^{p+q})$: we consider
 the four types of generators.

(i) Translation: $M=\begin{pmatrix} 1 \ \ b \\ 0 \ \ 1\end{pmatrix}$ with $b\in \mathbb{R}^{p+1}$; the induced function $M\langle \bx\rangle $ maps  $\bx$ to
$$(\bx_p+b)+ r\underline{\omega} \in  M(\Omega). $$
Obviously, $M(\Omega)$ is also a p-symmetric slice domain.

(ii) Inversion: $M=\begin{pmatrix}  0 \ \ \ 1\\ -1\ \ 0\end{pmatrix}$; the induced function takes the form
$$ M\langle \bx\rangle =-\bx^{-1}=-\frac{\overline{\bx_{p}}} {|\bx|^{2}}+ \frac{r}{|\bx|^{2}}\underline{\omega}, $$
which implies that $M(\Omega)$ is  a p-symmetric slice domain.

(iii) Modified rotation: $\begin{pmatrix}  a\ \ \ \ 0  \\ 0 \ \  \ a^{-1}\end{pmatrix}$ with $ a\in \mathbb{S},$
$$M\langle \bx\rangle =a \bx a =a\bx_pa+ra\underline{\omega}a=-\overline{\bx_p}+ra\underline{\omega}a.$$
Note that $(a\underline{\omega}a)^{2}=-1,$ and $a\underline{\omega}a=-2[a\overline{\underline{\omega}}]_{0}a+\underline{\omega}\in \mathbb{R}^{q},$
which implies that $a\underline{\omega}a\in \mathbb{S}.$ Furthermore, the map $\Psi:\mathbb{S}\rightarrow\mathbb{S}, \underline{\omega} \rightarrow a\underline{\omega}a$ is a covering. Hence,  $M(\Omega)$ is    a p-symmetric slice domain.

(iv) Dilation: $\begin{pmatrix}  \lambda\ \ \ \  \ 0  \\  0 \ \ \lambda^{-1} \end{pmatrix}$ with $\lambda \in \mathbb{R}\backslash\{0\},$
$$M\langle \bx\rangle =\lambda^{2}\bx=\lambda^{2}\bx_p+(\lambda^{2}r)\underline{\omega},$$
Obviously, $M(\Omega)$ is    a p-symmetric slice domain.
\end{proof}

By using of  Theorem  \ref{Representation-Formula-SM} and Proposition  \ref{symmetry}, we prove the invariance of generalized partial-slice monogenic functions under the group $GRAV( \mathbb{R}\oplus \mathbb{R}^{p+q})$   following \cite[Theorem 3.1]{Colombo-Krau-Sabadini-20}.
\begin{theorem}\label{invariance}
Let $M=\begin{pmatrix} a \ b \\ c\ d\end{pmatrix} \in GRAV(\mathbb{R}\oplus \mathbb{R}^{p+q})$  and let $f\in \mathcal {GSM}(\Omega)$, where   $\Omega\subseteq \mathbb{R}^{p+q+1}$ is a p-symmetric slice domain. Then the function
$$ T_{f}(\bx)=J(M,\bx)f(M\langle \bx\rangle), \quad J(M,\bx)=\frac{\overline{c\bx+d}}{|c\bx+d|^{p+2}},$$
is also  generalized partial-slice monogenic   in the variable $\bx \in M^{-1}(\Omega)$.
\end{theorem}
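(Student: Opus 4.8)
The plan is to reduce the statement to the classical conformal invariance of monogenic functions (Theorem \ref{Ryan}) by exploiting the Splitting Lemma and the Representation Formula, in the same spirit as the slice monogenic case treated in \cite{Colombo-Krau-Sabadini-20}. First I would observe that since $GRAV(\mathbb{R}\oplus\mathbb{R}^{p+q})$ is generated by the four types of matrices in \eqref{GRAV}, and since the cocycle $J(M,\bx)$ satisfies a chain-rule type identity $J(MN,\bx)=J(M,N\langle\bx\rangle)J(N,\bx)$ (up to the usual real scalar factors, which are harmless here), it suffices to verify the claim for each of the four generators separately. By Proposition \ref{symmetry}, $M^{-1}(\Omega)$ is again a p-symmetric slice domain, so the target class $\mathcal{GSM}(M^{-1}(\Omega))$ makes sense and, crucially, the Identity Theorem \ref{Identity-theorem} applies there.

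The key reduction is the following: to check that $T_f\in\mathcal{GSM}(M^{-1}(\Omega))$ it is enough, by Definition \ref{definition-slice-monogenic}, to check that for each fixed $\underline{\omega}\in\mathbb{S}$ the restriction $(T_f)_{\underline{\omega}}$ lies in the kernel of $D_{\bx_p}+\underline{\omega}\partial_r$ on the slice $(M^{-1}(\Omega))_{\underline{\omega}}$. Now each of the four generators, when restricted to the half-space $\mathrm{H}_{\underline{\omega}}\cong\mathbb{R}^{p+2}$, maps that half-space into itself: this is exactly what the computations in the proof of Proposition \ref{symmetry} show (translation by $b\in\mathbb{R}^{p+1}$, inversion $-\bx^{-1}$, the modified rotation $a\bx a$ which sends $\underline{\omega}\mapsto a\underline{\omega}a\in\mathbb{S}$, and the dilation $\lambda^2\bx$). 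For translations and dilations the induced map on $\mathrm{H}_{\underline{\omega}}$ is itself of the classical Möbius form in $\mathbb{R}^{p+2}$ with Ahlfors--Vahlen coefficients in $\mathbb{R}_{p+1}=\mathrm{Alg}\{e_0,\dots,e_p,\underline{\omega}\}$, so one applies Theorem \ref{Ryan} directly on the slice $\mathbb{R}^{p+2}$ (with $n=p+1$) to each monogenic component $F_A$ coming from the Splitting Lemma \ref{Splitting-lemma}, and reassembles via $\sum_A(\cdot)I_A$; note $|c\bx+d|$ in $\mathbb{R}^{p+q}$ coincides with the modulus computed inside the slice, and $\overline{c\bx+d}$ stays in $\mathbb{R}_{p+1}$, so the prefactor $J(M,\bx)$ is exactly the classical monogenic cocycle on the slice and the exponent $p+2=(p+1)+1$ matches. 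The inversion case is handled the same way: $\bx\mapsto -\bx^{-1}$ preserves $\mathrm{H}_{\underline{\omega}}$ and $J$ reduces to $\overline{\bx}^{-1}|\bx|^{?}$-type factor that is precisely the one in \eqref{inter} for $n=p+1$.

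The one generator that does not restrict to a slice-preserving map for a *fixed* $\underline{\omega}$ in the naive sense is the modified rotation $M\langle\bx\rangle=a\bx a$ with $a\in\mathbb{S}$, because it moves the slice $\mathrm{H}_{\underline{\omega}}$ to the slice $\mathrm{H}_{a\underline{\omega}a}$. Here I expect the main obstacle. The way around it, following \cite{Colombo-Krau-Sabadini-20}, is to use the Representation Formula \ref{Representation-Formula-SM}: writing $f(\bx_p+r\underline{\omega})=F_1(\bx_p,r)+\underline{\omega}F_2(\bx_p,r)$ with $F_1,F_2$ independent of $\underline{\omega}$, one computes $T_f$ explicitly, checks it is again of the form (stem data)$\,+\,\underline{\omega}\,$(stem data) with components independent of $\underline{\omega}$, and then verifies the generalized Cauchy--Riemann system \eqref{C-R} for the new stem function — equivalently, one verifies $D_{\underline{\eta}}(T_f)_{\underline{\eta}}=0$ on the single slice $\mathbb{R}^{p+1}\oplus\underline{\eta}\mathbb{R}$ for one convenient $\underline{\eta}$ (e.g.\ one fixed by the rotation, or $\underline{\eta}$ arbitrary using that $a\underline{\eta}a\in\mathbb{S}$), using Theorem \ref{Ryan} for the restriction and the fact that conjugation by $a\in\mathbb{S}$ is an algebra isomorphism $\mathbb{R}_{p+1}^{(\underline{\eta})}\to\mathbb{R}_{p+1}^{(a\underline{\eta}a)}$ intertwining the two Dirac-type operators. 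Once $T_f$ is shown to be generalized partial-slice monogenic on the single slice, the Identity Theorem \ref{Identity-theorem} on the p-symmetric slice domain $M^{-1}(\Omega)$ — together with the fact that $T_f$ agrees on $\mathbb{R}^{p+1}\cap M^{-1}(\Omega)$ with the genuine (ext)-extension of that slice datum — forces $T_f\in\mathcal{GSM}(M^{-1}(\Omega))$ globally. Finally, composing the four verified cases and using the cocycle identity for $J$ yields the result for an arbitrary $M\in GRAV(\mathbb{R}\oplus\mathbb{R}^{p+q})$.
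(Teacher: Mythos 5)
Your proposal follows essentially the same route as the paper's proof: reduce to the four generators via the cocycle identity for $J$, handle translation and dilation directly and the inversion by applying Theorem~\ref{Ryan} with $n=p+1$ slice by slice, and treat the modified rotation by writing $f$ via the Representation Formula as $F_1+\underline{\omega}F_2$ and verifying the generalized Cauchy--Riemann system \eqref{C-R} for the transformed stem data (which is exactly the computation the paper carries out). The only cosmetic differences are your explicit appeal to the Splitting Lemma when invoking Ryan's theorem and the single-slice-plus-Identity-Theorem detour for the rotation, which becomes redundant once the stem-function verification is done; neither changes the substance.
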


\begin{proof}
Let $\Omega$ be a p-symmetric slice domain. We prove the assertion for the generators $M$  of the group $GRAV( \mathbb{R}\oplus \mathbb{R}^{p+q})$.
Given $\bx=\bx_p+r\underline{\omega}\in \Omega$ with   $\bx_p\in \mathbb{R}^{p+1}, r\geq0, \underline{\omega}\in \mathbb{S}$, we consider
 $M(\bx)$ in four cases.

(i) Translation: $M=\begin{pmatrix} 1 \ \ b \\ 0 \ \ 1\end{pmatrix}$   with $b\in \mathbb{R}^{p+1}$. Then it is immediate that
$$T_{f}(\bx)=f(M\langle \bx\rangle)=f(\bx+b) $$
is still    generalized partial-slice monogenic   in $ M^{-1}(\Omega)$.

(ii) Inversion: $M=\begin{pmatrix}  0 \ \ \ 1\\ -1\ \ 0 \end{pmatrix}$. Then
$$T_{f}(\bx)=\frac{\overline{-\bx }}{|- \bx|^{p+2}}f(-\bx^{-1}). $$
If we identify $\bx$ with $\bx'=(\bx_{p},r)$, then   by  Theorem \ref{Ryan} in the case $n=p+1$, we get that the restriction  of the function $T_{f}$ to $M^{-1}(\Omega)_{\underline{\omega}}$ is left monogenic  for each $\underline{\omega}\in \mathbb{S}$. Consequently,  $T_{f} \in
\mathcal{GSM}(M^{-1}(\Omega))$.

(iii) Modified rotation: $M=\begin{pmatrix}  a\ \ \ \ \ \  \\  \ \  \ a^{-1}\end{pmatrix}$ with $ a\in \mathbb{S}.$ From the proof of Proposition \ref{symmetry},  it holds that
$$M\langle \bx\rangle =a \bx a  =-\overline{\bx_p}+ra\underline{\omega}a \in \mathbb{R}^{p+1} \oplus r \mathbb{S}.$$
Thus by setting $a\underline{\omega}a =\underline{\omega}'$ and using the Representation Formula
 in Theorem \ref{Representation-Formula-SM}, we deduce that the  generalized partial-slice monogenic function $f$ defined over the p-symmetric slice domain $\Omega$ has the form
 $$f(a\bx a)=  F_1+\underline{\omega}'F_2,$$
where $F_1,F_2$ depend on suitable variables.\\
More precisely, we have
$$T_{f}(\bx)=  a f(a \bx a)
= a( F_1(-\overline{\bx_p},r)+a\underline{\omega}aF_2(-\overline{\bx_p},r))
 = a F_1(-\overline{\bx_p},r)-\underline{\omega}aF_2(-\overline{\bx_p},r).$$
From the following the equalities
$$ D_{\bx_p}  \underline{\omega}a= \underline{\omega}\overline{D}_{\bx_p} a=\underline{\omega} a D_{\bx_p},$$
$$\overline{D}_{\bx_p} ( F_1(-\overline{\bx_p},r))=- (D_{\bx_p} F_1 )  (-\overline{\bx_p},r ),$$
$$ D_{\bx_p} (F_2(-\overline{\bx_p},r))=-(\overline{D}_{\bx_p}F_2 )(-\overline{\bx_p},r), $$
we obtain that
\begin{eqnarray*}
& & D_{\underline{\omega}} T_{f}(\bx)\\
&=&(D_{\bx_p}+\underline{\omega}\partial_{r})(a F_1(-\overline{\bx_p},r)-\underline{\omega}a F_2(-\overline{\bx_p},r))
\\
 &=& D_{\bx_p} (a F_1(-\overline{\bx_p},r))- D_{\bx_p} (\underline{\omega}a F_2(-\overline{\bx_p},r))+\underline{\omega}\partial_{r}(a F_1(-\overline{\bx_p},r)-\underline{\omega}aF_2(-\overline{\bx_p},r))
 \\
 &=& a \overline{D}_{\bx_p} ( F_1(-\overline{\bx_p},r))- \underline{\omega}a D_{\bx_p} (F_2(-\overline{\bx_p},r))+\underline{\omega}a \partial_{r} F_1(-\overline{\bx_p},r)+a \partial_{r} F_2(-\overline{\bx_p},r)
 \\
&=&  -a (D_{\bx_p} F_1)  (-\overline{\bx_p},r )+  \underline{\omega}a (\overline{D}_{\bx_p}F_2 )(-\overline{\bx_p},r))+\underline{\omega}a \partial_{r} F_1(-\overline{\bx_p},r)+a \partial_{r} F_2(-\overline{\bx_p},r)
 \\
&=&  -a (D_{\bx_p} F_1-\partial_{r} F_2)  (-\overline{\bx_p},r )+
\underline{\omega}a (\overline{D}_{\bx_p}F_2+ \partial_{r} F_1)(-\overline{\bx_p},r).
\end{eqnarray*}
By Theorem \ref{relation-GSR-GSM} (ii),  the condition $f \in
\mathcal{GSM}(\Omega )$ is equivalent to that the pair $(F_1,F_2)$ satisfies the generalized Cauchy-Riemann equations (\ref{C-R}), which implies that $D_{\underline{\omega}} T_{f}(\bx)\equiv0$ in $M^{-1}(\Omega)$, i.e.,  $T_{f} \in
\mathcal{GSM}(M^{-1}(\Omega))$.

(iv) Dilation: $M=\begin{pmatrix}  \lambda\ \ \ \  \ 0  \\ 0 \ \ \lambda^{-1} \end{pmatrix}$ with $\lambda \in \mathbb{R}\backslash\{0\}.$ Then
  $$T_{f}(\bx)=|\lambda|^{p+2} \lambda^{-1} f (\lambda^{2}\bx)$$
is  obviously generalized partial-slice monogenic   in $ M^{-1}(\Omega)$.

The final step is to prove the statement for any element of the group. In fact, we need only to consider two generators $M=\begin{pmatrix} a \ b \\ c\ d\end{pmatrix}, N=\begin{pmatrix} A \ B \\ C\ D\end{pmatrix} \in GRAV(\mathbb{R}\oplus \mathbb{R}^{p+q})$.

As we proved before, the condition  $f \in
\mathcal{GSM}(  \Omega )$ implies that $T_{f} \in \mathcal{GSM}(M^{-1}(\Omega)),$ which also leads to the function
$$   g(x)= J(N,\bx)T_{f}(N\langle \bx\rangle) \in \mathcal{GSM}((MN)^{-1}(\Omega)). $$
Direct computations give that
$$ J(M,N\langle \bx\rangle)= \frac{\overline{cN\langle \bx\rangle+d}}{|cN\langle \bx\rangle+d|^{p+2}}=
\frac{|C\bx+ D|^{p+2}}{\overline{  C\bx+ D  }} \frac{\overline{(cA+dC) \bx+cB+dD }}{|(cA+dC) \bx+cB+dD|^{p+2}},$$
and  then
\begin{eqnarray*}
 J(N,\bx)J(M,N\langle \bx\rangle)&=&\frac{\overline{C\bx+D}}{|C\bx+D|^{p+2}}J(M,N\langle \bx\rangle)
\\
 &=&  \frac{\overline{(cA+dC) \bx+cB+dD }}{|(cA+dC) \bx+cB+dD|^{p+2}}
 \\
 &=&  J((MN), \bx).
\end{eqnarray*}
 Therefore,
$$   g(x)= J(N,\bx)J(M,N\langle \bx\rangle)f(M\langle N\langle \bx\rangle  \rangle) =J((MN),\bx) f((M N)\langle \bx\rangle ). $$
Since the transformation $T_{f}$ preserves the generalized partial-slice monogenicity under  the composition of  any two generators $M$ and $N$, the desired result follows. \end{proof}

In the proof of Theorem \ref{invariance} above, the assumption of `p-symmetric slice domain' is fully applied when using  Theorem \ref{Representation-Formula-SM} and  Theorem \ref{relation-GSR-GSM} (ii).  From this point of view, it seems  that  Theorem \ref{invariance}  is valid  under the condition that the involved domain is partially symmetric and slice. However, the sliceness is not required in \cite[Theorem 3.1]{Colombo-Krau-Sabadini-20} since the authors use the class of slice functions which are also slice monogenic. In any case, an elementary approach  to deal with the   invariance of generalized partial-slice monogenicity and  establish  {an}  intertwining relation  for $C^{1}$ functions in term of  the global  differential operator   $G_{\bx}$ and its    iterated  case is developed in another  paper \cite{Ding}.
\bigskip
\\
\textbf{Acknowledgements}
The first author  would like express his hearty thanks  to Dr. Chao Ding (Anhui University) for helpful discussions.
\bigskip
\\
\textbf{Conflict of interest}
On behalf of all authors, the corresponding author states that there is no conflict of interest.




\vskip 10mm
\end{document}